\newtheorem{thm}{Theorem}[section]
\newtheorem{cor}[thm]{Corollary}
\newtheorem{lem}[thm]{Lemma}
\newtheorem{prop}[thm]{Proposition}
\newcommand{\ra}{\rangle}
\newcommand{\la}{\langle}
\newcommand{\n}{\nabla}
\newcommand{\ib}{\bar{i}}
\newcommand{\jb}{\bar{j}}
\newcommand{\kb}{\bar{k}}
\newcommand{\lb}{\bar{l}}
\newcommand{\dbar}{\bar{\partial}}
\newcommand{\w}{\omega}
\newcommand{\vn}{\vec{n}}
\newcommand{\pd}{\partial}
\newtheorem{theorem}{Theorem}
\newtheorem{remark}[theorem]{Remark}
\title[]{Dimension estimate and existence of holomorphic sections with polynomial growth on gradient K\"ahler Ricci shrinkers}
\author{Fei He}
\author{Jianyu Ou}
\email{hefei@xmu.edu.cn and oujianyu@xmu.edu.cn}
\address{School of Mathematical Science, Xiamen University, 422 S. Siming Rd. Xiamen, Fujian, P.R.China, 361005.}
\begin{document}

\begin{abstract}
In this paper, we show that the dimension for the space of holomorphic $(p, 0)$-forms, and holomorphic sections of the pluri-anticanonical line bundle $K_M^{-q}$ with polynomial growth on gradient K\"ahler Ricci shrinkers with bounded curvature is bounded from above by a power function of the growth rate.  We also prove the existence of holomorphic sections of $K_M^{-q}$ with polynomial growth when the K\"ahler Ricci shrinker is asymptotically conical, provided $q$ is sufficiently large. As an application, we show that the Kodaira map constructed using such sections is a holomorphic embedding into a complex projective space.
\end{abstract}

\maketitle

\section{Introduction}
On a complete K\"ahler manifold, we say a holomorphic function $u$ has polynomial growth if there are constants $d$ and $C$, such that 
\[
|u(x)| \leq C(1+r(x))^d
\]
for all $x$, where $r(x)$ is the geodesic distance to a fixed point; we denote the linear space of such functions as $\mathcal{O}_d$. Holomorphic functions, and holomorphic sections of some bundles, are natural objects to study since they yield information about the complex structure of the manifold. There are extensive studies of holomorphic functions with polynomial growth on K\"ahler manifolds with nonnegative bisectional curvature or nonnegative Ricci curvature (see for example \cite{Mok1984}, \cite{Ni2004}, \cite{CFYZ06} \cite{Liu2016}, \cite{Liu2021a}, \cite{Liu2023} and references therein). Holomorphic sections of the pluri-anticanonical bundle with polynomial growth were studied in \cite{Mok1990} and \cite{Liu2021b}, and were applied to construct embeddings into projective spaces. 

Recall that a gradient shrinking Ricci soliton (also called Ricci shrinker) is a triple $(M, g, f)$, where $(M,g)$ is a complete Riemannian manifold, and $f$ is a smooth function satisfying the equation
\begin{equation}\label{eqn: soliton equation}
Ric + \n\n f = \frac{1}{2}g.
\end{equation}
The importance of gradient Ricci solitons lies in their role as self-similar solutions to the Ricci flow, and they can serve as models for the formation of singularities. Understanding the geometry of gradient Ricci solitons is therefore a central problem in the study of the Ricci flow, as it provides critical insights into the behavior of the flow near singular points. Recently, gradient K\"ahler Ricci shrinkers of complex dimension 2 are classified, see \cite{MW2019}\cite{CDS2024}\cite{CCD2024}\cite{BCCD}\cite{LW2023} and related works. The classification for higher dimension is widely open at the moment. 

The study of holomorphic functions and holomorphic forms with polynomial growth on gradient K\"ahler Ricci solitons was initiated by Munteanu and Wang \cite{MW2014}\cite{MW2015}. They proved that on a gradient K\"ahler Ricci shrinker, the linear space $\mathcal{O}_d$ is finite dimensional \cite{MW2014}.  
In this paper, we study holomorphic $(p,0)$-forms and sections of the pluri-anticanonical bundles, in hope to get useful information about the complex structure of the underlying manifold. In our previous work \cite{HO2024}, we showed that holomorphic functions with polynomial growth on the complete K\"ahler Ricci shrinkers can be written as finite linear combinations of holomorphic eigenfunctions of the drifted Laplacian $\Delta_f:=\Delta-\nabla_{\nabla f}$. Similar statements also hold for holomorphic $(p, 0)$-forms, for which the operator to look at is the weighted Hodge Laplacian $\Delta_f^d$. Therefore, it will be useful to study the eigenvalues and eigenfunctions of these operators. For generality, we will do this for a class of operators acting on sections of vector bundles. 

{{For the purposes of this article, the spectrum of an operator will always mean its $L^2$ spectrum. }}For an operator $L$ with discrete spectrum, we denote its eigenvalues as $\lambda_1 \leq \lambda_2 \leq \lambda_3 \leq ...$. Let $\mathcal{N}(\lambda)= \sup \{k:\lambda_k \leq \lambda \}$, this is called the \textbf{spectral counting number} in some of the literature. The study of the spectral counting number for elliptic operators has a long history which we shall not survey in this article, our main interest is to give an upper estimate of the spectral counting number for certain operators on gradient Ricci shrinkers.

Let $dv_f = e^{-f}dv_g$ be a weighted measure. Consider a vector bundle $E$ over $M$, equipped with an inner product $\langle, \rangle_E$ and a compatible connection $\n$. Let $L = - \Delta  + \n_{\n f}+ A$ be an operator acting on sections of $E$ , where $A$ is a symmetric endomorphism of $E$. Then $L$ is self-adjoint w.r.t. the weighted $L^2$ inner product $\int_M \la , \ra_E dv_f$. Suppose $A$ is bounded, then the same arguments as in \cite{HN2014} and \cite{CZ2016} shows that $L$ has discrete spectrum bounded from below. We have the following estimate for its spectral counting number.

\begin{thm}\label{thm: estimate of spectral counting number}
Suppose $(M^n, g, f)$ is a complete gradient shrinking Ricci soliton with bounded curvature, $E \to M$ is a vector bundle equipped with a metric $\la, \ra$ and a compatible connection. Let $L = - \Delta  + \n_{\n f}+ A$, where $\Delta$ is the rough Laplacian and $A\in \Gamma(End(E))$ satisfies $\la A(u), u\ra \leq \Lambda |u|^2$ for any $u \in \Gamma(E)$, suppose $\Lambda$ is taken s.t. $\lambda_1(L)+ \Lambda \geq \frac{1}{2}$. Then the spectral counting number of $L$ satisfies
\[
\mathcal{N}(\lambda) \leq C \hspace{2pt }rank(E) (1 +\lambda +\Lambda) ^{{n+1}},
\]
where $C$ is a constant depending on the geometry. 
\end{thm}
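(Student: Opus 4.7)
The plan is a heat-trace argument: pass from the bundle operator $L$ to the scalar drift Laplacian $\Delta_f$ via Kato's inequality, bound the resulting scalar heat trace on the shrinker, and optimize in $t$. Let $\{u_k\}$ be an $L^2(dv_f)$-orthonormal system of eigensections of $L$ with eigenvalues $\lambda_k$, and set $G(x) := \sum_{\lambda_k \leq \lambda}|u_k(x)|^2$, so $\int_M G\,dv_f = \mathcal{N}(\lambda)$. Since $e^{-t\lambda_k} \geq e^{-t\lambda}$ on the terms with $\lambda_k \leq \lambda$,
\[
\mathcal{N}(\lambda) \;\leq\; e^{t\lambda}\,\mathrm{tr}(e^{-tL}) \;=\; e^{t\lambda}\int_M \mathrm{tr}_{E_x} K_L(t,x,x)\,dv_f(x),
\]
where $K_L$ is the heat kernel of $L$ with respect to $dv_f$. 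The task is then to bound the integrand pointwise by a scalar drift heat kernel.

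The central reduction is a Kato-type semigroup domination. If $u$ solves $(\partial_t + L)u = 0$, the Bochner identity yields $\partial_t|u|^2 = \Delta_f|u|^2 - 2|\nabla u|^2 - 2\langle Au,u\rangle$. Combining Kato's inequality $|\nabla u|^2 \geq \bigl|\nabla|u|\bigr|^2$ with the two-sided bound $|\langle Au,u\rangle| \leq \Lambda |u|^2$ — which we may assume by enlarging $\Lambda$ to dominate $|A|_{\mathrm{op}}$, permitted by the boundedness of $A$ stated before the theorem — shows that $|u|$ is a weak subsolution of $\partial_t w = \Delta_f w + \Lambda w$. The maximum principle then gives the pointwise domination
\[
\bigl|(e^{-tL}\psi)(x)\bigr|_E \;\leq\; e^{\Lambda t}\,\bigl(e^{t\Delta_f}|\psi|\bigr)(x),
\]
hence $\|K_L(t,x,y)\|_{\mathrm{op}} \leq e^{\Lambda t}\,K_f(t,x,y)$ where $K_f$ denotes the scalar drift heat kernel, and in particular
\[
\mathrm{tr}(e^{-tL}) \;\leq\; \mathrm{rank}(E)\,e^{\Lambda t}\,\mathrm{tr}(e^{t\Delta_f}).
\]

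It remains to estimate the scalar drift heat trace. Bounded curvature yields uniform local Sobolev and Poincar\'e inequalities on unit balls; the Cao--Zhou polynomial volume growth controls the distribution of volume in $M$; and $\int_M dv_f$ is finite. Combining these ingredients, in the spirit of the discreteness arguments in \cite{HN2014} and \cite{CZ2016}, produces a bound of the form $\mathrm{tr}(e^{t\Delta_f}) \leq C\,t^{-(n+1)}$ for $t \in (0,1]$, with $C$ depending on the geometry. Putting everything together,
\[
\mathcal{N}(\lambda) \;\leq\; C\,\mathrm{rank}(E)\,e^{t(\lambda+\Lambda)}\,t^{-(n+1)},
\]
and optimizing over $t$ by taking $t = \min\{1, (n+1)/(\lambda+\Lambda)\}$ — well-defined because the hypothesis $\lambda_1(L) + \Lambda \geq 1/2$ forces $\lambda + \Lambda \geq 1/2$ for every eigenvalue — produces the claimed bound $\mathcal{N}(\lambda) \leq C'\,\mathrm{rank}(E)(1+\lambda+\Lambda)^{n+1}$.

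The main obstacle will be the scalar heat-trace estimate: obtaining a uniform polynomial-in-$t^{-1}$ bound on the non-compact shrinker requires careful handling of the unbounded drift $\nabla f$ and of the global noncompactness, via the weighted volume and local parabolic regularity. By contrast, the Kato reduction is purely structural and uses only boundedness of $A$ together with a maximum-principle argument.
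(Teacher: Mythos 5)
Your route is genuinely different from the paper's. The paper never touches heat kernels: it bounds the frequency $U(r)=D(r)/I(r)$ of eigensections via the Colding--Minicozzi argument (Theorem \ref{thm: frequency upper bound}), converts that into a doubling inequality for $\int_{b<r}|u|^2|\nabla b|^2$ across two annuli, and then runs Li's trace trick --- evaluate $F=\sum_i|u_i|^2$ at its maximum point, pass to the codimension-$\mathrm{rank}(E)$ subspace of sections vanishing there, and apply the mean value inequality of Lemma \ref{lem: mean value inequality} together with noncollapsing \cite{CN2009} and the Cao--Zhou volume bound \cite{CZ2010}. Your heat-trace scheme is a legitimate alternative and, if completed, would likely yield the sharper exponent $n$ in place of $n+1$. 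The Kato domination step is essentially fine modulo routine care: the two-sided bound $|\langle Au,u\rangle|\le\Lambda|u|^2$ is in fact what the paper assumes in Section \ref{section: spectral counting number} (and what its own proof uses), and the parabolic maximum principle against the unbounded drift $\nabla f$ needs a justification, e.g.\ via $e^{-f}$-weighted energy estimates.

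There is, however, a genuine gap at the step carrying all of the quantitative content: the bound $\mathrm{tr}(e^{t\Delta_f})\le C\,t^{-(n+1)}$ for $t\in(0,1]$. You attribute it to ``the spirit of'' \cite{HN2014} and \cite{CZ2016}, but those arguments only establish compactness of the embedding $W^{1,2}(dv_f)\hookrightarrow L^2(dv_f)$, hence discreteness of the spectrum; they contain no polynomial-in-$t^{-1}$ trace estimate and no eigenvalue growth rate, so the central inequality of your proof is asserted rather than proved. To close it you would need an actual argument, for instance: the ground-state transform $u\mapsto e^{-f/2}u$ is a unitary from $L^2(dv_f)$ to $L^2(dv)$ conjugating $\Delta_f$ to the Schr\"odinger operator $\Delta-V$ with $V=\tfrac14|\nabla f|^2-\tfrac12\Delta f=\tfrac{f}{4}+\tfrac{S}{4}-\tfrac{n}{4}$, which by the soliton identities and \cite{CZ2010} grows like $r^2/16$; then Golden--Thompson together with the on-diagonal bound $K(t,x,x)\le Ct^{-n/2}$ (valid for $t\le1$ by bounded curvature and noncollapsing) and the Euclidean volume upper bound gives $\mathrm{tr}\big(e^{-t(-\Delta+V)}\big)\le Ct^{-n/2}\int_M e^{-tV}\,dv\le Ct^{-n}$. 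With some such lemma supplied, your optimization in $t$ does produce the claimed polynomial bound; without it, the proposal does not yet constitute a proof.
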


 As a consequence, we have the following dimension estimate.
For any integer $p\geq 0$, 
let $\mathcal{O}_{\mu}(\Lambda^{p,0})$ be the linear space of holomorphic $(p, 0)$-forms $u$ with polynomial growth of order at most  $\mu$, i.e. $|u(x)|\leq C(1+r(x))^\mu$ for some constant $C$, where $r(x)$ is the geodesic distance to some fixed point.  
\begin{cor}\label{cor: estimate of the dimension of holomorphic forms}
 Let $(M^{2m}, g, f)$ be a complete gradient K\"{a}hler Ricci shrinker with bounded curvature. Let $K = \sup_M|Ric| + 1$. Then for any $p\geq 0$, we have
\[
\dim \mathcal{O}_{\mu}(\Lambda^{p,0}) \leq C \hspace{2pt }rank(\Lambda^{p,0}) (1 +\mu + pK)^{ 2m+1},
\]
where $C$ is a constant depending on the geometry. 
\end{cor}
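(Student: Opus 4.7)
The plan is to apply Theorem \ref{thm: estimate of spectral counting number} to the bundle $E = \Lambda^{p,0}$ with a specific operator $L$, and then to identify $\mathcal{O}_\mu(\Lambda^{p,0})$ with the span of eigensections of $L$ whose eigenvalues are controlled by $\mu$.

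For the operator, I would take
\[
L = -\Delta + \n_{\n f} + W_p
\]
on $\Lambda^{p,0}$, where $W_p$ is the Weitzenb\"ock curvature endomorphism appearing in the K\"ahler Bochner identity $2\Delta_{\dbar} = -\Delta + W_p$ on $(p,0)$-forms. For $(p,0)$-forms on a K\"ahler manifold, $W_p$ is built tensorially from the Ricci tensor contracting each of the $p$ holomorphic indices, so $\la W_p u,u\ra \leq p|Ric||u|^2 \leq pK|u|^2$ by the choice of $K$. Therefore Theorem \ref{thm: estimate of spectral counting number} applies with $\Lambda = pK$ (enlarging $\Lambda$ by a fixed constant if needed to secure $\lam_1(L)+\Lambda \geq \tfrac{1}{2}$) and yields
\[
\mathcal{N}(\lam) \leq C \, rank(\Lambda^{p,0})\,(1 + \lam + pK)^{2m+1}.
\]

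For the second step, I would show that every $u \in \mathcal{O}_\mu(\Lambda^{p,0})$ lies in the span of eigensections of $L$ with eigenvalues at most $c(1+\mu)$ for an absolute constant $c$. The key algebraic input is that on a K\"ahler Ricci shrinker the soliton identity $Ric + \n\n f = \tfrac{1}{2}g$ forces the $(2,0)$-part of $\n\n f$ to vanish, so $\n^{1,0}f$ is a holomorphic vector field and the Lie-derivative-type operator $\n_{\n f}$ preserves holomorphic sections of $\Lambda^{p,0}$. Combined with the Bochner identity and the vanishing of $\Delta_{\dbar}$ on holomorphic $(p,0)$-forms, this gives $Lu = \n_{\n f}u$ for holomorphic $u$; in particular $L$ preserves holomorphicity, so the $L^2_f$-orthogonal projection of $u$ onto any eigenspace of $L$ is again a holomorphic $(p,0)$-form. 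Adapting the Fourier-type expansion argument of \cite{HO2024} from holomorphic functions to holomorphic $(p,0)$-forms then expresses $u$ as a finite linear combination of such eigensections with eigenvalues at most $c(1+\mu)$. Combining with the counting bound above and absorbing constants gives the corollary.

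The main technical obstacle I anticipate is this second step. Polynomial-growth holomorphic $(p,0)$-forms are typically not in $L^2_f$, so the projection onto eigenspaces must be realized through a suitable truncation or heat-kernel regularization of $u$, and one must verify that the resulting projections remain holomorphic (using that $L$ commutes with $\dbar$ on $\Lambda^{p,0}$). Establishing the eigenvalue cutoff $c(1+\mu)$ from the polynomial growth hypothesis then follows, as in \cite{HO2024}, from the drift structure of $L$ on holomorphic sections; the vector-valued setting of $(p,0)$-forms should require only notational rather than conceptual modifications, with the pointwise tensor computation identifying $W_p$ and the bound $\la W_p u, u\ra \leq pK|u|^2$ being essentially routine once the complex-geometric setup is in place.
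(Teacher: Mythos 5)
Your proposal follows the paper's route: rewrite a drift Hodge-type Laplacian on $\Lambda^{p,0}$ in the Weitzenb\"ock form required by Theorem \ref{thm: estimate of spectral counting number}, bound the zeroth-order endomorphism by a multiple of $pK$, and reduce the dimension count to the spectral counting number via a finite eigenexpansion of polynomial-growth holomorphic $(p,0)$-forms. The step you flag as the main obstacle is exactly what the paper imports from \cite{HO2024} (Lemmas \ref{lem: mapping holomorphis forms to solutions of heat equation} and \ref{lem: linear expansion for solutions of the f heat equation of forms}), so the overall architecture matches.

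Two corrections are worth making. First, the operator for which this machinery is set up is the weighted Hodge Laplacian $\Delta_f^d = \Delta^d + \mathcal{L}_{\n f}$, not $-\Delta + \n_{\n f} + W_p$: the eigenexpansion comes from pulling $\w$ back by the soliton diffeomorphisms, which produces an ancient solution of $\pd_s \hat{\w} + \Delta_f^d \hat{\w} = 0$, and it is the Lie derivative $\mathcal{L}_{\n f}$ (not $\n_{\n f}$) that appears there. Via Cartan's formula $\mathcal{L}_{\n f} = \n_{\n f} + (\text{Hess}\,f)$-contraction, so the correct zeroth-order term is $\mathcal{R}_f = \mathcal{R} + (\text{Hess}\,f)$-term (Lemma \ref{lem: rewriting the f-Hodge Laplacian}); by the soliton equation $\n\n f = \tfrac12 g - Ric$ this extra piece is still bounded by $p(\sup_M|Ric|+\tfrac12)$ per index, so the final bound is unaffected, but with your $L$ (missing the Hessian term) the eigensections differ from those of $\Delta_f^d$ and the expansion lemma of \cite{HO2024} does not literally apply. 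Second, your anticipated obstacle that polynomial-growth forms are ``typically not in $L^2_f$'' does not arise: the weight is $e^{-f}$ with $f$ growing quadratically, so every polynomially growing section is automatically in $L^2(dv_f)$ and no truncation or heat-kernel regularization is needed. The real content of the cutoff $\lambda_i \leq \mu/2 + pK$ is a backward-time comparison: as $s \to -\infty$ the soliton pullback $\hat{\w}(\cdot,s)$ grows at a rate controlled by $\mu$ and the curvature, while an eigenmode contributes $e^{-\lambda_i s}$, which forces the coefficients of high eigenvalues to vanish.
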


\begin{remark}
 By tracing the constants carefully in Munteanu and Wang's proof \cite{MW2014}, one can see that the dimension is bounded from above by an exponential of $d$. In the same paper, they remarked that it is possible to control the dimension explicitly, as a polynomial of $d$, but they did not give the proof.   Since $\mathcal{O}_d = \mathcal{O}_d(\Lambda^{0,0})$, Corollary \ref{cor: estimate of the dimension of holomorphic forms} confirms the claim by Munteanu and Wang with bounded curvature by {a} totally different method. Note that in \cite{HO2024}, we have a better estimate for $\dim \mathcal{O}_d$ under some stronger curvature assumptions. 
\end{remark}

Similarly, let $K_M^{-1}$ be the anti-canonical line bundle, let $q$ be a positive integer, and denote $\mathcal{O}_\mu(K_M^{-q})$ as the linear space of holomorphic sections of $K_M^{-q}$ with polynomial growth of order at most $\mu$, we have the following dimension estimate:
\begin{cor}\label{cor: dimension estimate for power of anticanonical bundle}
Let $(M^{2m}, g, f)$ be a complete gradient K\"ahler Ricci shrinker with bounded curvature. For any integer $q> 0$, we have 
\[
\dim \mathcal{O}_d(K_M^{-q}) \leq C (1 + q + d)^{2m+1 },
\]
where $C$ is a constant depending on the geometry.
\end{cor}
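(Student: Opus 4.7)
The plan is to apply Theorem \ref{thm: estimate of spectral counting number} to the line bundle $E = K_M^{-q}$, paralleling the proof of Corollary \ref{cor: estimate of the dimension of holomorphic forms}. The Chern connection induced from the K\"ahler metric $g$ makes $K_M^{-q}$ into a Hermitian holomorphic line bundle with Chern curvature $q\rho$, where $\rho$ is the Ricci form. Consider
\[
L \;=\; -\Delta + \nabla_{\nabla f} + \tfrac{1}{2}\,\mathrm{Id}
\]
acting on sections of $K_M^{-q}$, where $\Delta$ is the rough Chern Laplacian. Since the Dirichlet form of $-\Delta + \nabla_{\nabla f}$ on the weighted $L^2$ space $L^2(dv_f)$ is nonnegative (integration by parts yields $\int |\nabla u|^2\,dv_f$), we have $\lambda_1(L) \geq \tfrac{1}{2}$, and with $A = \tfrac{1}{2}\,\mathrm{Id}$, $\Lambda = \tfrac{1}{2}$ the hypothesis of Theorem \ref{thm: estimate of spectral counting number} is satisfied, giving
\[
\mathcal{N}(\lambda) \;\leq\; C(1 + \lambda)^{2m+1}.
\]

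The next step is to show that any $u \in \mathcal{O}_d(K_M^{-q})$ is a finite linear combination of $L$-eigensections whose eigenvalues satisfy $\lambda \leq C(d + qK)$, in the spirit of the scalar case in \cite{HO2024}. The key ingredients are: (i) the quadratic growth of $f$, which places every polynomial-growth section in $L^2(dv_f)$; (ii) the fact that $\nabla f$ is real holomorphic on a K\"ahler Ricci shrinker, so $L$ preserves holomorphic sections and admits a spectral decomposition on that subspace; and (iii) the Bochner identity for sections of $K_M^{-q}$, which reduces $\Delta u$ for a holomorphic $u$ to a zeroth-order expression of size $O(qK|u|)$. On the holomorphic subspace, $L$ therefore acts essentially as the first-order ``degree'' operator $\nabla_{\nabla f}$ perturbed by a bounded zeroth-order term of size $O(qK)$. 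Agmon-type lower growth estimates, adapted from \cite{HO2024}, then force only eigensections with eigenvalue $\lambda \leq C(d + qK)$ to appear in the expansion of a polynomial-growth section.

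Applying Theorem \ref{thm: estimate of spectral counting number} with this eigenvalue cutoff, together with $\mathrm{rank}(K_M^{-q}) = 1$, we conclude
\[
\dim \mathcal{O}_d(K_M^{-q}) \;\leq\; \mathcal{N}\bigl(C(d + qK)\bigr) \;\leq\; C(1 + d + qK)^{2m+1},
\]
and absorbing the geometric constant $K$ into $C$ yields the stated bound. The main technical obstacle is the spectral-decomposition step: extending the Agmon-type growth estimates of \cite{HO2024} from holomorphic functions to sections of the nontrivial line bundle $K_M^{-q}$, and tracking precisely how the curvature perturbation enters the effective degree, so that the eigenvalue cutoff is linear in $d$ and $q$ as required for the exponent $2m + 1$ in the final bound.
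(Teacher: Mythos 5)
Your overall architecture matches the paper's: realize every polynomial-growth holomorphic section of $K_M^{-q}$ as a finite sum of eigensections of a drifted Laplacian with eigenvalue cutoff linear in $d+q$, then invoke Theorem \ref{thm: estimate of spectral counting number} with $\mathrm{rank}(K_M^{-q})=1$. Your normalization $A=\tfrac12\mathrm{Id}$ in place of the paper's $A=-\tfrac{nq}{4}\mathrm{Id}$ (which arises canonically from $\mathcal{L}_{\nabla f}u=\nabla_{\nabla f}u-\tfrac q2\Delta f\,u$ in Lemma \ref{lem: Lie derivative of a section}, the identity $\Delta u=-\tfrac q2 Su$ for holomorphic $u$, and the trace soliton identity $S+\Delta f=\tfrac n2$) merely translates the spectrum, leaves the eigensections unchanged, and shifts the cutoff by $O(q)$; this is harmless for the final exponent. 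Note also that it is $\mathcal{L}_{\nabla f}$, not $\nabla_{\nabla f}$ alone, that preserves holomorphicity — the combination $-\Delta+\nabla_{\nabla f}+c$ works only because the curvature term $\tfrac q2 S$ and the divergence term $-\tfrac q2\Delta f$ recombine into a constant.

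The genuine gap is the step you yourself flag as the main obstacle: the eigenvalue cutoff. This is precisely Proposition \ref{prop: control the dimension by counting number}, and the mechanism the paper uses there is not an Agmon-type estimate. Instead it pulls $u$ back by the biholomorphisms generated by $\nabla f$ to obtain $\hat u(x,s)$ solving the heat-type equation (\ref{eqn: heat type equation for L}), derives $\|\hat u(\cdot,s)\|^2_{L^2(dv_f)}\le Ce^{(q(n-2\sup_M S)-d)s}$ for $s<0$ by tracking the distortion of distances and volume elements along the flow (estimates (\ref{eqn: growth of u hat}) and (\ref{eqn: growth of u hat when s < 0})), proves uniqueness of the $L^2(dv_f)$ Cauchy problem, and compares with the spectral expansion $\sum_i|a_i|^2e^{-2\lambda_i s}$ to force $a_i=0$ whenever $2\lambda_i>d+q(2\sup_M S-n)$. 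A pointwise lower growth bound for individual eigensections — even if established — would not by itself bound the projections $a_i\phi_i$ of $u$: the eigensections are orthogonal in $L^2(dv_f)$ but not on balls or annuli, so one would still need a frequency or three-annulus argument, or the backward-in-time flow comparison above, to exclude high modes. Until you supply this step (or carry out the flow-pullback argument, which is what actually makes the cutoff linear in $d$ and $q$), the proof is incomplete.
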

We remark that our method also works for other holomorphic bundles such as the canonical line bundle $K_M$ and its multiples. 

\medskip
Next we study the existence of holomorphic functions and sections with polynomial growth on gradient K\"ahler Ricci shrinkers. In this article we prove the following existence result for sections of $K_M^{-q}$ on asymptotically conical K\"ahler Ricci shrinkers.  
\begin{thm}\label{thm: existence of polynomial growth holomorphic sections - introduction}
Let $(M, g, f)$ be a complete gradient K\"ahler Ricci shrinker which is asymptotically conical. There exists a constant $\bar{q}$, such that for any given $q\geq \bar{q}$, there exists a nontrivial holomorphic section $u$ in $K_M^{-q}$ with polynomial growth of order at most $\Lambda$, where $\Lambda$ depends only on $n, q$ and the geometry. Moreover, $\mathcal{O}_\Lambda(K_M^{-q})$ separates points and tangents.
\end{thm}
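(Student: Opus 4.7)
The plan is a Hörmander $L^2$ construction, seeded by a homogeneous holomorphic section on the asymptotic cone. The Kähler soliton equation together with the large parameter $q$ supplies the curvature positivity needed for $\bar{\partial}$-solvability, and an additional plurisubharmonic singular weight is used to control the polynomial growth of the correction.

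Since $(M,g,f)$ is asymptotically conical, a diffeomorphism $\Phi$ identifies the complement of a compact set in $M$ with a truncated Kähler cone $(C, g_C, J_C)$. On $C$, for $q$ sufficiently large, $K_C^{-q}$ admits nontrivial holomorphic sections homogeneous under the Reeb scaling of every sufficiently large degree in a fixed arithmetic progression, by Riemann--Roch / Hilbert-series considerations on the (orbifold) base. Pick such a $\sigma_C$ of degree $\mu_0$; then $\sigma := \chi_R\,\Phi_*\sigma_C$, with cutoff $\chi_R$ equal to $1$ on $\{r\geq 2R\}$ and supported in $\{r\geq R\}$, is a smooth section of $K_M^{-q}$ of growth order $\mu_0$, whose $\bar{\partial}$ is supported in the annulus $\{R\leq r\leq 2R\}$ and bounded by $C r^{\mu_0}\eta(R)$ with $\eta(R)\to 0$. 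Now equip $K_M^{-q}$ with the modified Hermitian metric $h_q := h\cdot e^{-qf}$, where $h$ is the natural metric induced by $g$. By the Kähler soliton equation, the Chern curvature of $h_q$ equals
\[
q\,\mathrm{Ric}_\omega + q\,i\partial\bar{\partial} f \;=\; \tfrac{q}{2}\,\omega,
\]
uniformly positive of order $q$. Hörmander's $L^2$-theorem then solves $\bar{\partial} v = \bar{\partial}\sigma$ with $\int_M |v|_h^2\, e^{-qf}\, dv_g \leq \tfrac{2}{q}\int_M |\bar{\partial}\sigma|_h^2\, e^{-qf}\, dv_g$, and taking $R$ large makes the right-hand side small compared to the $L^2_f$-mass of $\sigma$ on $\{r\leq R/2\}$. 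The difference $u := \sigma - v$ is then a nontrivial holomorphic section of $K_M^{-q}$ in the weighted $L^2$-space defined by $h_q$.

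To upgrade the $L^2$ control on $v$ to polynomial growth, I would rerun the Hörmander estimate with the weight $e^{-qf}$ further multiplied by a plurisubharmonic factor $e^{-\alpha \psi}$, where $\psi$ is a smoothed exhaustion function on $M$ asymptotic to $\log(1+r^2)$ and $\alpha := \mu_0 + m$. The additional curvature term $\alpha\,i\partial\bar{\partial}\psi \geq 0$ preserves the $\tfrac{q}{2}\omega$-positivity, so one obtains $\int_M |v|_h^2\, e^{-qf}\,(1+r^2)^{-\alpha}\, dv_g < \infty$. Combined with interior elliptic estimates on unit balls (valid by bounded geometry) and the explicit growth of $\sigma$, this forces a pointwise bound $|u|_h \leq C(1+r)^\Lambda$ for some $\Lambda$ depending only on $\mu_0$, $q$, $m$ and the geometry.

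For the separation statement, repeat the construction with an additional singular plurisubharmonic factor $\log|z-p|^{2m}$, respectively $\log|z-p|^{2m+2}$, in a local holomorphic chart near each designated point $p$. Square-integrability against such a weight forces the Hörmander correction $v$ to vanish at $p$, respectively to first order, so $u = \sigma - v$ inherits the prescribed value or first jet of $\sigma$. Choosing $\sigma_C$ appropriately, or multiplying by a lower-degree polynomial-growth section produced by the basic existence, arranges $\sigma(p_1) = 0$ and $\sigma(p_2) \neq 0$ (or the analogous first-order condition), realizing the separation. The main analytic obstacle I anticipate is the polynomial-growth step: the Gaussian weight $e^{-qf}$ is indispensable for curvature positivity, yet dominates any polynomial weight in a naive $L^2$-to-pointwise conversion at infinity, so producing polynomial rather than Gaussian growth requires an estimate that combines the weighted $L^2$-bound on $v$ with the $\bar{\partial}$-harmonicity of $u$ on the complement of the cutoff annulus, where $u$ is a holomorphic perturbation of the explicit polynomial-growth section $\sigma$.
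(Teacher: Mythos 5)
Your overall strategy --- H\"ormander $L^2$ with the weight $e^{-qf}$ (whose curvature is $\tfrac{q}{2}\omega$ by the soliton equation) plus singular plurisubharmonic factors to prescribe jets --- matches the first half of the paper's argument: Lemma \ref{lem: local construction of holomorphic sections} uses exactly the weight $qf+\xi_x+\xi_y+\xi_H$ and Demailly's Corollary 5.3. The genuine gap is the step you yourself flag at the end: converting the Gaussian-weighted $L^2$ bound on the correction $v$ into polynomial growth of $u=\sigma-v$. The bound $\int_M|v|_h^2e^{-qf}\,dv<\infty$ is compatible with $|v|\sim e^{qf/2}\sim e^{qr^2/8}$, and multiplying the weight by $(1+r^2)^{-\alpha}$ changes nothing because $e^{-qf}$ dominates every polynomial factor; moreover, holomorphicity of $u$ outside the cutoff annulus does not by itself rule out super-polynomial growth (on the Gaussian shrinker $\mathbb{C}^m$ the section corresponding to $e^{z_1}$ is holomorphic and lies in $L^2(e^{-qf}dv)$). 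So the assertion that ``this forces a pointwise bound $|u|_h\le C(1+r)^\Lambda$'' is unsupported, and interior elliptic estimates on unit balls cannot recover it from the stated $L^2$ data.

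The paper closes this gap with two ingredients your proposal lacks. First, the $L^2$ construction is used only on a fixed bounded region $D_{r_0}$, where it produces sections $u_0$ with a uniform sup bound and, crucially, a uniform lower bound $\sup_{b=r}|u_0|_h\ge 1$ on every sphere with $r\in[r_0/4,r_0]$, arranged via an extra singular weight $\xi_H$ along a family of hypersurfaces. Second, these local sections are pushed forward by the biholomorphisms $\psi_{t_i}$ generated by $\nabla f$; the soliton structure lets one compute how the Hermitian metric, the area forms and $b$ evolve along the flow, yielding a uniform frequency bound $U_{u_i}(r_i)<\Lambda$ at the scale $r_i\sim e^{t_i/2}r_0$. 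The weak frequency monotonicity on asymptotically conical shrinkers (Theorem \ref{eqn: weak monotonicity of U in the AC case}, proved by blowing down to the cone and using discreteness of the spectrum of an elliptic operator on the link) then propagates this bound to all intermediate scales, giving $I_{u_i}(r)\le C(1+r)^{2\Lambda}$, and a limit of the $u_i$ is the desired section. Some version of this three-circles/frequency mechanism, exploiting the structure at infinity, is indispensable; without it your construction produces a nontrivial holomorphic section in the Gaussian-weighted $L^2$ space, but not one of polynomial growth.
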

This theorem is similar in spirit to the existence results in \cite{Liu2016} , \cite{Xu2016} and \cite{Huang2019}. We use the $L^2$ method to obtain local holomorphic sections with certain properties. Then, instead of using scaling arguments as in \cite{Liu2016}, \cite{Xu2016} and \cite{Huang2019} where one has the scaling invariant assumption of nonnegative Ricci curvature, we pullback local sections to larger regions by the biholomorphism generated by $-\n f$.

The frequency function also plays an important role in our method.
Colding and Minicozzi introduced a frequency function for harmonic functions and sections on complete Riemannian manifolds in {\cite{CM1997}} and {\cite{CM1998}}. They generalized it to eigensections of $f$-Laplacian on Ricci shrinkers in {\cite{CM2021}} which is a main tool in the proof of Theorem {\ref{thm: estimate of spectral counting number}}.
The monotonicity of the frequency is also used to ensure that these pullback sections converge to a global section with polynomial growth in the proof of Theorem {\ref{thm: existence of polynomial growth holomorphic sections - introduction}}. In our proof, the asymptotically conical assumption is needed to prove the monotonicity of the frequency for holomorphic sections, see Theorem \ref{eqn: weak monotonicity of U in the AC case}; and the quadratic decay of curvature is used to control the evolution of quantities along the flow lines of $\n f$.

As an application of Corollary \ref{cor: dimension estimate for power of anticanonical bundle} and Theorem \ref{thm: existence of polynomial growth holomorphic sections - introduction}, we obtain the following result.  
 
\begin{cor}\label{cor: Kodaira emdedding of AC shrinkers}
Let $(M, g, f)$ be a gradient K\"ahler Ricci shrinker which is asymptotically conical. Then for $q$ large enough, there exist constants $\Lambda$ and $N$, such that the Kodaira map $\mathcal{K}: M \to \mathbb{CP}^N$ constructed by sections of $\mathcal{O}_{\Lambda}(K_M^{-q})$ is a holomorphic embedding. 
\end{cor}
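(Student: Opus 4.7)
The plan is to combine the dimension estimate of Corollary \ref{cor: dimension estimate for power of anticanonical bundle} with the existence/separation result of Theorem \ref{thm: existence of polynomial growth holomorphic sections - introduction} via the classical Kodaira construction. First I fix $q \geq \bar{q}$ as in Theorem \ref{thm: existence of polynomial growth holomorphic sections - introduction}, so that $V := \mathcal{O}_{\Lambda}(K_M^{-q})$ is nontrivial and separates points and tangents. Corollary \ref{cor: dimension estimate for power of anticanonical bundle} then gives $N+1 := \dim_{\mathbb{C}} V < \infty$, and I fix a basis $s_0, \dots, s_N$ of $V$. In any local holomorphic trivialization of $K_M^{-q}$ over $U \subset M$, each $s_i|_U$ becomes a holomorphic function, so one sets
\[
\mathcal{K}(x) = [s_0(x) : s_1(x) : \cdots : s_N(x)] \in \mathbb{CP}^N
\]
at points where not all $s_i(x)$ vanish. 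A change of trivialization multiplies the coordinates by a common nonvanishing holomorphic factor, so the local definitions patch into a well-defined holomorphic map $\mathcal{K}: M \setminus \mathrm{Bs}(V) \to \mathbb{CP}^N$, where $\mathrm{Bs}(V)$ denotes the common zero locus of all sections in $V$.

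I would then verify the three standard conditions that make $\mathcal{K}$ a holomorphic embedding on all of $M$. Base-point freeness, $\mathrm{Bs}(V) = \emptyset$, follows from separation of points: given any $x_0 \in M$ and any $y \neq x_0$, the separation property furnishes a section in $V$ that vanishes at $y$ but not at $x_0$, so some $s_i(x_0)$ is nonzero. For injectivity, separation of points means that for distinct $x, y \in M$ the evaluation functionals $V \to (K_M^{-q})_x$ and $V \to (K_M^{-q})_y$ (computed in compatible local trivializations) are not proportional, hence $\mathcal{K}(x) \neq \mathcal{K}(y)$. For the immersion property, given any nonzero $v \in T_x^{1,0} M$, separation of tangents produces $s \in V$ with $s(x) = 0$ and $\nabla_v s(x) \neq 0$; after passing to the affine chart of $\mathbb{CP}^N$ around $\mathcal{K}(x)$ determined by some basis section $s_j$ with $s_j(x) \neq 0$, this exhibits a nonzero value of $d\mathcal{K}_x(v)$, so $d\mathcal{K}_x$ is injective.

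All the genuinely hard analytic content is absorbed into Theorem \ref{thm: existence of polynomial growth holomorphic sections - introduction}, whose proof relies on the $L^2$ method, frequency monotonicity, and the asymptotically conical hypothesis; the present corollary is then the textbook deduction of a Kodaira embedding, and the main obstacle is only notational---keeping careful track of local trivializations and the relationship between affine charts of $\mathbb{CP}^N$ and linear combinations of basis sections. Because $M$ is noncompact, the image $\mathcal{K}(M)$ need not be closed in $\mathbb{CP}^N$, so ``holomorphic embedding'' is understood here as an injective holomorphic immersion rather than a closed embedding.
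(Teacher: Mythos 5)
Your proposal is correct and follows essentially the same route as the paper: the paper's own proof simply invokes Theorem \ref{thm: existence of polynomial growth holomorphic sections - introduction} for separation of points and tangents, Corollary \ref{cor: dimension estimate for power of anticanonical bundle} for finite dimensionality, and then defers the remaining base-point-freeness/injectivity/immersion verification to ``classic arguments'' with a citation to Berndtsson, which is exactly the part you spell out explicitly. Your closing caveat about the noncompact setting (injective immersion versus closed embedding) is a reasonable reading that the paper itself does not address.
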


\begin{remark}
    It has been proved by Conlon, Deruelle and Sun \cite{CDS2024}, among other important results, that asymptotically conical gradient K\"ahler Ricci shrinkers can be holomorphically embedded onto a quasi-projective variety in $\mathbb{CP}^N$ for $N$ large enough. The proof in \cite{CDS2024} involves some algebraic geometric arguments. This result has been improved very recently by Sun and Zhang \cite{SZ2024}, they proved that all complete gradient K\"ahler Ricci shrinkers are quasi-projective, also using algebraic geometry. Quasi-projectivity of K\"ahler manifolds with positive Ricci curvature and curvature decay conditions have been studied by Mok \cite{Mok1990} and Liu \cite{Liu2021b}. Theorem \ref{thm: existence of polynomial growth holomorphic sections - introduction} and Corollary \ref{cor: Kodaira emdedding of AC shrinkers} is proved by a classical analytic method which mainly deals with the embeddedness of the Kodaira map. 
\end{remark}

\begin{remark}
    As a final remark, let's clarify the meaning of ``depending on the geometry" in the statements above. In Theorem \ref{thm: estimate of spectral counting number}, Corollary \ref{cor: estimate of the dimension of holomorphic forms} and Corollary \ref{cor: dimension estimate for power of anticanonical bundle}, the constant $C$ actually depends on the dimension, the curvature bound, the volume noncollapsing constant (shrinkers are known to be non-collapsed \cite{CN2009}), and the volume ratio upper bound $sup_{r> 0} r^{-n} Vol(B(p,r))$ for some fixed point $p$ (which is finite due to \cite{CZ2010}). However, in Theorem \ref{thm: existence of polynomial growth holomorphic sections - introduction} and Corollary \ref{cor: Kodaira emdedding of AC shrinkers}, we need the compactness of a bounded region to ensure that the constants $\bar{q},\Lambda$ and $N$ are finite. 
\end{remark}

The structure of this article is as follows: We prove Theorem \ref{thm: estimate of spectral counting number} in Section \ref{section: spectral counting number}. Corollary \ref{cor: estimate of the dimension of holomorphic forms} and \ref{cor: dimension estimate for power of anticanonical bundle} are proved in Section \ref{section: holomorphic sections}. In Section \ref{section: frequency on AC shrinkers} we study the frequency monotonicity for holomorphic sections of the pluri-anticanonical bundles on asymptotically conical K\"ahler Ricci shrinkers. Finally in Section \ref{section: existence} we prove Theorem \ref{thm: existence of polynomial growth holomorphic sections - introduction} and Corollary \ref{cor: Kodaira emdedding of AC shrinkers}. 


\section{The spectral counting number}\label{section: spectral counting number}
\subsection{Definition and an upper estimate for the frequency.}
Let $(M^n, g, f)$ be a complete gradient Ricci shrinker of real dimension $n$. Throughout the article, we let $b = 2\sqrt{f}$, let $S$ be the scalar curvature, and we always suppose the potential function $f$ is normalized so that 
\begin{equation}\label{eqn: soliton equation for gradient f}
|\n f|^2 + S = f.
\end{equation}
It was proved by Cao and Zhou \cite{CZ2010} that $f$ grows like a quadratic function of the distance
\[
\frac{1}{4}(r(x) - c_1)^2_+ \leq f(x) \leq \frac{1}{4} (r(x) + c_2)^2,
\]
where $c_1, c_2$ are constants depending only on the dimension, $r(x)$ is the geodesic distance to a fixed point where $f$ achieves its minimal value. 
By a well-known result due to Chen \cite{Chen2009}, the scalar curvature is positive everywhere unless the soliton is flat, hence we can assume without loss of generality that $S> 0$ everywhere, then we have $b > 0$.
Note that 
\[
|\n b|^2  = 1 - \frac{S}{4b^2},
\]
and we have assumed that the curvature is uniformly bounded, so $|\n b| > 0$ when $b > R_0$ for some $R_0$ depending on the curvature bound. 

Let $E \to M$ be a vector bundle over $M$ equipped with a metric $\langle, \rangle$ and a compatible connection $\nabla$. Consider the operator $L$ acting on a section $u \in \Gamma(E)$ by 
\begin{equation}\label{eqn: type of elliptic operator}
L u = - \Delta u  + \n_{\n f}u + A(u),
\end{equation}
where $\Delta$ is the rough Laplacian, $A\in \Gamma(End(E))$ is a section of the endomorphism bundle of $E$. We require that $A$ is symmetric 
\[
\la A(u), v\ra = \la u, A(u)\ra, \quad \forall u, v \in \Gamma(E),
\]
and in addition we require that $A$ is bounded, i.e. 
$$|\la A(u), u\ra| \leq \Lambda |u|^2$$ 
for some constant $\Lambda$, where $|u| = \sqrt{\langle u, u\rangle} $. 

Then $L$ is a symmetric operator which can be densely defined on $L^2(dv_f, \Gamma(E))$, the Hilbert space of $L^2$ sections w.r.t. the weighted measure $e^{-f}dv$. By the same argument as in \cite{HN2014} and \cite{CZ2016}, the operator $L$ has discrete spectrum. Moreover, the spectrum of $L$ is bounded from below since we have assumed that $A$ is uniformly bounded. Let $\lambda_1$ be the smallest eigenvalue of $L$, without loss of generality, we can take $\Lambda$ larger if necessary so that
\[
\lambda_1 + \Lambda \geq \frac{1}{2},
\]
which will be assumed throught the following discussion. 

Now let $\lambda$ be an eigenvalue of $L$, and let $u$ be a $W^{1,2}(dv_f)$ eigensection such that $L u = \lambda u$. The norm of $u$ satisfies
\begin{equation}\label{eqn: differential inequality for the norm square}
\Delta_f |u|^2 = 2 \la \Delta u - \n_{\n f} u, u\ra + 2 |\n u|^2 = 2 |\n u|^2 + 2\la A(u) - \lambda u, u\ra \geq -2(\lambda + \Lambda) |u|^2.
\end{equation}

The frequency function for a section $u$ satisfying (\ref{eqn: differential inequality for the norm square}) was studied in \cite{CM2021}. Following \cite{CM2021} we define
\[
I(r) = r^{1-n}\int_{b = r} |u|^2 |\n b|
\]
for regular values $r$ of $b$. The positivity of $I(r)$ has been proved in \cite{CM2021} in more generality. By the divergence theorem
\[
\begin{split}
I(r)= & \int_{b< r} div ( |u|^2 b^{1-n} \n b) = \int_{b< r} {\color{blue}b^{1-n}}\langle \n |u|^2 , \n b \rangle + |u|^2 ((1-n)b^{-n} |\n b|^2 + b^{1-n} \Delta b ).
\end{split}
\]
By equations (\ref{eqn: soliton equation}) and (\ref{eqn: soliton equation for gradient f}), we have the identity
\[
b \Delta b = n - |\n b|^2 - 2S,
\]
thus at a regular value $r$ we have 
\begin{equation}\label{eqn: derivative of I - eigensection}
I'(r) = r^{1-n} \int_{b = r} \langle \n |u|^2 , \vn \rangle + r^{-n} \left( \frac{4n}{r^2} - 2\right) \int_{b = r} \frac{S |u|^2}{|\n b|},
\end{equation}
where $\vn$ is the unit outer normal vector. 
Define
\[
D(r) = \frac{1}{2} r^{2-n} \int_{b = r} \langle \n |u|^2 , \vn \rangle.
\]
By {\color{blue} the} divergence theorem
\begin{equation}\label{eqn: formula of D}
\begin{split}
D(r) =& \frac{1}{2} r^{2-n} e^{r^2/4} \int_{b < r} \Delta_f |u|^2 dv_f \\
= &  r^{2-n} e^{r^2/4} \int_{b < r}  |\n u|^2 + \la A (u) - L u, u\ra  dv_f .\\
\end{split}
\end{equation}
Define the frequency function for $u$ as 
\[
U(r) = \frac{D(r)}{I(r)}
\]
when $I(r)> 0$.

For an eigensection of the operator $L$, we can estimate its frequency by the corresponding eigenvalue. The following theorem can be seen as a weaker version of Theorem 0.7 in \cite{CM2021} in the sense that it is less sharp, however we modify the proof in \cite{CM2021} to make explicit the dependence of the constant $R_0$ on $\lambda + \Lambda$, which is needed for our application. 
 
\begin{thm}\label{thm: frequency upper bound}[Colding-Minicozzi \cite{CM2021}] Let $u$ be a section satisfying (\ref{eqn: differential inequality for the norm square}), where $\lambda + \Lambda \geq 1/2$.
For any $\epsilon > 0$, there is a constant $C_0(n, \epsilon)$, such that for all $r > R_0 := C_0(n, \epsilon )(1 + \sqrt{\lambda + \Lambda})$, we have
\[
U(r) \leq (2+\epsilon) (\lambda + \Lambda).
\]
\end{thm}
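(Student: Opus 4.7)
The plan is to bound $D(r)$ from above by rewriting it as a tail integral over $\{b \geq r\}$, compare that tail against $I(r)$ via the coarea formula, and then close the argument through a bootstrap on the growth of $I$ driven by the differential inequality $(\log I)'(r) \leq 2 U(r)/r$ implicit in (\ref{eqn: derivative of I - eigensection}).

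First, since $u$ is an $L^2$ eigensection of $L$ with eigenvalue $\lambda$, integrating $\langle Lu, u\rangle$ against $dv_f$ over all of $M$ and using the exponential decay of $e^{-f}$ to justify the global integration by parts gives
\[
\int_M \bigl( |\nabla u|^2 + \langle A(u), u\rangle \bigr)\, dv_f = \lambda \int_M |u|^2\, dv_f.
\]
Subtracting this identity from formula (\ref{eqn: formula of D}), applying $\langle A(u), u\rangle \geq -\Lambda |u|^2$, and discarding the nonnegative $|\nabla u|^2$ yields
\[
D(r) \leq (\lambda + \Lambda)\, r^{2-n} e^{r^2/4} \int_{b \geq r} |u|^2\, dv_f.
\]
Then the coarea formula together with the asymptotic $|\nabla b|^2 = 1 - S/(4b^2) \to 1$ (a consequence of the bounded-curvature hypothesis) gives, for any prescribed $\delta > 0$ and $r$ larger than a dimensional constant,
\[
\int_{b \geq r} |u|^2\, dv_f \leq (1+\delta) \int_r^\infty s^{n-1} e^{-s^2/4} I(s)\, ds,
\]
and combining these two estimates yields
\[
U(r) = \frac{D(r)}{I(r)} \leq (1+\delta)(\lambda + \Lambda)\, r^{2-n} e^{r^2/4} \int_r^\infty s^{n-1} e^{-s^2/4}\, \frac{I(s)}{I(r)}\, ds.
\]

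Next comes the bootstrap. For $r > \sqrt{2n}$ the prefactor $(4n/r^2 - 2)$ in (\ref{eqn: derivative of I - eigensection}) is nonpositive, so $I'(r) \leq 2 D(r)/r$, which gives $(\log I)'(r) \leq 2 U(r)/r$ and hence $I(s)/I(r) \leq (s/r)^{2K}$ whenever $\sup_{[r,s]} U \leq K$. Substituting and evaluating the Gaussian tail $\int_r^\infty s^{n-1+2K} e^{-s^2/4}\, ds = 2 r^{n-2+2K} e^{-r^2/4}\bigl(1 + O((n+K)/r^2)\bigr)$ by iterated integration by parts produces
\[
U(r) \leq 2(1+\delta)(\lambda + \Lambda)\bigl(1 + C(n+K)/r^2\bigr).
\]
Setting $K(r) = \sup_{s \geq r} U(s)$ (which is non-increasing) and feeding $K = K(r)$ back in, the self-consistent inequality $K(r) \leq 2(1+\delta)(\lambda+\Lambda)\bigl(1 + C(n+K(r))/r^2\bigr)$ can be solved for $K(r)$; choosing $r \geq R_0 := C_0(n, \epsilon)(1 + \sqrt{\lambda + \Lambda})$ with $C_0$ large enough forces $K(r) \leq (2 + \epsilon)(\lambda + \Lambda)$, which is the claim.

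The main obstacle is the rigorous closure of this bootstrap, since $K(r)$ is not a priori finite. To initialize the iteration, I would first establish a crude polynomial bound on $I$ (equivalently, a rough bound $K(r) = O(r^2)$) using the smoothness of $u$, Moser-type iteration on $b$-annuli, and the bounded-curvature hypothesis; once $K(r)$ is finite for $r$ large, the inequality above drives it down to the sharp constant $\sim 2(\lambda + \Lambda)$. Tracking the error factor $(n+K)/r^2$ quantitatively with the self-consistent $K \sim \lambda + \Lambda$ is what pins the dependence $R_0 \sim 1 + \sqrt{\lambda + \Lambda}$ needed for the dimension estimate in Theorem \ref{thm: estimate of spectral counting number}.
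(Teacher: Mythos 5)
Your forward estimates are correct and attractive: the identity $\int_M(|\nabla u|^2+\langle A(u),u\rangle)\,dv_f=\lambda\int_M|u|^2\,dv_f$ does let you rewrite $D(r)$ as a tail integral over $\{b\ge r\}$, the coarea comparison with $I$ is legitimate since $|\nabla b|\to 1$, and the Gaussian-tail asymptotic would indeed deliver the sharp constant $2(\lambda+\Lambda)$ \emph{if} one already knew that $U$ is bounded on $[r,\infty)$ by some $K$ with $n+2K\ll r^2$. (Two smaller caveats: the theorem is later applied to finite linear combinations of eigensections, which satisfy (\ref{eqn: differential inequality for the norm square}) but not $Lu=\lambda u$, so your Step 1 would need the orthogonality version of the identity; and your parenthetical ``a crude polynomial bound on $I$, equivalently $K(r)=O(r^2)$'' conflates two very different statements --- $U=O(r^2)$ integrates to a Gaussian-type bound $I(s)\le Ce^{Cs^2}$, not a polynomial one.)

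The genuine gap is the closure of the bootstrap, and it is not a technicality. Your self-consistent inequality $K\le A\bigl(1+C(n+K)/r^2\bigr)$ is only \emph{derived} under the hypothesis $n+2K\lesssim r^2$ (otherwise $\int_r^\infty s^{n-1+2K}e^{-s^2/4}\,ds$ peaks near $s=\sqrt{2(n-1+2K)}>r$ and is nothing like $2r^{n-2+2K}e^{-r^2/4}$), so you need $K\lesssim r^2/4$ with a constant strictly below $1/4$ before the inequality says anything --- and a bound $U(s)\le Cs^2$ with $C\ge 1/4$ makes $\int_r^\infty s^{n-1}e^{-s^2/4}(I(s)/I(r))\,ds$ useless or divergent under your substitution $I(s)/I(r)\le e^{C(s^2-r^2)}$. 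Moser iteration on annuli bounds $\sup|u|^2$ by local $L^2$ averages but gives no upper bound on $D/I$, so the initialization you propose does not get you below the critical threshold. This is precisely where the paper's proof does its real work: it argues by contradiction, showing via the ODE for $\log U$ that if $U(r_0)>(2+\epsilon)(\lambda+\Lambda)$ at a single $r_0>R_0$ then $U(r)>\tfrac{r^2}{2}-r$ for all large $r$, whence $K(r)=D(r)-4(\lambda+\Lambda)I(r)$ grows like $e^{3r^2/8}$ and $\int_{b<r}\Delta_f|u|^2\,dv_f\to\infty$, contradicting $u\in W^{1,2}(dv_f)$. That dichotomy (either $U\le(2+\epsilon)(\lambda+\Lambda)$ or $U\gtrsim r^2/2$) is exactly what excludes the intermediate growth regime in which your tail comparison degenerates; to complete your argument you would have to prove essentially the same dichotomy, at which point the direct route loses its advantage.
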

\begin{proof} We present a proof following roughly the arguments in \cite{CM2021}, the main difference is in the claim below where we make explicit $R_0$. Take derivative of (\ref{eqn: formula of D}), and use the assumption that $\la A(u), u \ra \leq \Lambda |u|^2$, we have
\begin{equation}\label{eqn: derivative of D}
D'(r) \geq - \frac{n-2}{r} D + \frac{r}{2} D - (\lambda + \Lambda ) r^{2-n} \int_{b = r} |u|^2 |\n b|^{-1} + r^{2-n} \int_{b = r} |\n u|^2 |\n b|^{-1}.
\end{equation}
By Holder inequality, 
\[
D(r)^2  = \left( \frac{1}{2} r^{2-n} \int_{b = r} \langle \n |u |^2 , \vn \rangle  \right)^2 \leq r^{3-n} \left( \int_{b = r} |\n u|^2 |\n b|^{-1}\right) I(r). 
\]
Hence
\begin{equation}\label{eqn: differential inequality for D}
D'(r) \geq - \frac{n-2}{r} D + \frac{r}{2} D  + \frac{U D}{r }- (\lambda + \Lambda ) r I(r) - 4(\lambda + \Lambda ) r^{-n} \int_{b = r} S|u|^2 |\n b|^{-1} .
\end{equation}
Then we have
\begin{equation}\label{eqn: differential inequality for U}
(\log U)'(r) \geq - \frac{n-2}{r} + \frac{r}{2} - \frac{U}{r} - \frac{(\lambda + \Lambda ) r}{U} + 2 \left( 1 - \frac{2(\lambda + \Lambda )}{U} - \frac{2n}{r^2} \right) \frac{1}{r^n I} \int_{b = r} S|u|^2 |\n b|^{-1}. 
\end{equation}
\textbf{Claim:} For any $\epsilon > 0$, there is a constant $C_0(n, \epsilon)$, such that if $U(r_0) > (2+\epsilon)(\lambda + \Lambda)$ for some $r_0 > R_0 := C_0(n, \epsilon )(1 + \sqrt{\lambda + \Lambda})$, then $U(r) > \frac{1}{2} r^2 - r$ for all $r >C_0(n, \epsilon) r_0$.

\textit{Proof of Claim:} Let $R_0 > \sqrt{2n (1+ \frac{2}{\epsilon})}$, if $U(r) > (2+\epsilon)(\lambda + \Lambda)$ for a regular value $r$, then (\ref{eqn: differential inequality for U}) implies
\[
(\log U)'(r) \geq - \frac{n-2}{r}- \frac{U}{r} + \frac{\epsilon r}{2(2+\epsilon)}.
\]
Let $(r_0, r_1)$ be the maximal interval such that 
\begin{equation}\label{eqn: assumed range of U(r)}
(2+\epsilon)(\lambda + \Lambda) <U(r) < \frac{\epsilon r^2}{4(2+\epsilon)}.
\end{equation}
If such an interval does not exist, we can simply take $r_1 = r_0$. In the case $r_1 > r_0$,  for any $r_0 < r < r_1$, we have
\[
(\log U)'(r) \geq - \frac{n-2}{r} + \frac{\epsilon r}{4(2+\epsilon)}\geq \frac{\epsilon r}{ 8 (2+ \epsilon)},
\]
where the last inequality comes from requring that $R_0 > \sqrt{8(2+\epsilon)(n-2)\epsilon^{-1}}$.
Integrating this inequality yields that
\[
U(r) \geq U(r_0) e^{\epsilon 16^{-1} (2+\epsilon)^{-1} (r^2 - r_0^2)} ,
\]
together with (\ref{eqn: assumed range of U(r)}), this implies 
\[
\frac{\epsilon r^2}{4(2+\epsilon)} \geq \frac{2+\epsilon}{2}e^{\epsilon 16^{-1} (2+\epsilon)^{-1} (r^2 - r_0^2)}, 
\]
hence we must have $r_1 \leq C(\epsilon) r_0$ for some constant $C(\epsilon)$ depending on $\epsilon$. Note that we used the assumption that $\lambda + \Lambda \geq 1/2$, otherwise the constant would also depend on the lower bound of the spectrum. 

If there is an $r_2 > r_1$, such that $U(r) > \frac{\epsilon r^2}{5(2+\epsilon)}$ for all $r \in (r_1, r_2)$,  and $U(r_2)\leq \frac{\epsilon r_2^2}{5(2+\epsilon)}$, then we have at $r = r_2$,
\[
\frac{2\epsilon r_2}{5(2+\epsilon)} \geq U'(r_2) \geq  U(r_2) \left( -\frac{n-2}{r_2} - \frac{U(r_2)}{r_2} + \frac{\epsilon r_2}{2 (2+\epsilon)}\right) \geq \frac{\epsilon^2 r_2^3}{40(2+\epsilon)^2},
\]
where we further require that $R_0 > \sqrt{10n (1+ \frac{2}{\epsilon})}$ to have the last inequality. This leads to a contradiction when we take $R_0$ larger if necessary such that $R_0 > 4\sqrt{(2+\epsilon) \epsilon^{-1}}$. Therefore such an $r_2$ cannot exist, and we have $U(r) > \frac{\epsilon r^2}{5(2+\epsilon)}$ for all $r > r_1$.

 If $U(r) > \frac{r^2}{2} - r$ for all $r > r_1$, the proof is done since we have $r_1 < C(\epsilon) r_0$, otherwise there is a largest interval $(r_1, r_3)$, where $\frac{r^2}{2} - r \geq U(r)> \frac{\epsilon r^2}{5(2+\epsilon )}$.  Then on this interval we have
\[
\begin{split}
(\log U)'(r) \geq & - \frac{n-2}{r} + \frac{r}{2} - \frac{U}{r} - \frac{5(2+\epsilon)(\lambda + \Lambda ) }{\epsilon r}\\
\geq & 1 -\frac{n-2}{r} - \frac{5(2+\epsilon)(\lambda + \Lambda ) }{\epsilon r} > \frac{1}{2},
\end{split}
\]
where we further require $C_0(n, \epsilon)$ to be large enough. Integrating the above inequality yields
\[
U(r) \geq U(r_1) e^{\frac{r-r_1}{2}},
\]
which implies that $r_3 < C r_1$ for some universal constant $C$. Then we can use the same argument as above to show that $U(r) > \frac{r^2}{2} - r$ for all $r > r_3$. This finishes the proof of the Claim.

Now, we can prove the theorem by an argument of contradiction. Assume there exists an $r_0 > R_0$, such that $U(r_0) > (2+\epsilon) (\lambda + \Lambda)$, then by the Claim, for $r > (4+C_0(n, \epsilon) )r_0$, we have $K(r): = D(r) - 4(\lambda + \Lambda )I(r) > 0$. By (\ref{eqn: derivative of I - eigensection}) and (\ref{eqn: differential inequality for D}), we have 
\[
r K' \geq (2 - n + r^2 - r -8(\lambda +\Lambda))K + 4(\lambda + \Lambda) (2 - n + \frac{3r^2}{4} - r - 8\lambda) I,
\]
see \cite{CM2021} for details of the calculation. Observe that when $r >C(n) + 4\sqrt{\lambda + \Lambda}$ for some dimensional constant $C(n)$, the above inequality implies
\[
r K' \geq \frac{3r^2}{4}K.
\]
Integrating the above inequality yields
\[
K(r) > K(s) e^{\frac{3}{8} (r^2 - s^2)}, \quad r > s > C(n) + 4\sqrt{\lambda + \Lambda}+ (C_0+4)r_0.
\]
Now
\[
D(r) = \frac{1}{2} r^{2-n} e^{r^2/4} \int_{b < r } \Delta_f|u|^2 dv_f\geq  K(s) e^{\frac{3}{8} (r^2 - s^2)},
\]
hence
\[
\lim_{r \to \infty} \int_{b < r } \Delta_f|u|^2 dv_f \geq \lim_{r \to \infty} c e^{r^2 / 8} = \infty,
\]
however, by (\ref{eqn: formula of D}), the left hand side should be finite since $u$ is in $W^{1,2}(dv_f)$. 
\end{proof}

\subsection{Estimate of the spectral counting number}
We prove Theorem \ref{thm: estimate of spectral counting number} in this subsection. First we need the following mean-value inequality.
\begin{lem}\label{lem: mean value inequality} Let $(M^n, g, f)$ be a complete gradient Ricci shrinker with bounded curvature, suppose $u$ is a nonnegative solution of the differential inequality
\[
\Delta_f u \geq - a u,
\]
where $a \geq 0$ is a nonnegative constant. Then for any $x \in M$ and $0 < r < 1$, we have
\[
|u(x)|^2 \leq \frac{C_M(a+1+d(p,x))}{Vol(B_x(r))} \int_{B_x(r)} |u|^2,
\]
where $p$ is a fixed minimal point of $f$, $C_M$ is a constant depending on the dimension $n$ and the curvature bound.
\end{lem}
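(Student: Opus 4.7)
The strategy is a Moser iteration applied to the nonnegative subsolution $u$ of the elliptic differential inequality obtained by expanding $\Delta_f u \geq -au$ as
\[
\Delta u \geq \nabla_{\nabla f} u - a u.
\]
Viewed on the ball $B_x(r)$ with $r\leq 1$, this is an elliptic inequality with first-order coefficient $-\nabla f$ and zeroth-order coefficient $a$, and the plan is to run the standard iteration with explicit tracking of the dependence on both.

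Before iterating, I would gather two ingredients. First, the drift is locally controlled: from the normalization $|\nabla f|^2+S=f$ and the Cao--Zhou estimate $f(y)\leq \tfrac14(r(y)+c_2)^2$ one gets $|\nabla f|(y)\leq C(1+d(p,x))$ for every $y\in B_x(1)$; set $K:=C(1+d(p,x))$. Second, the bounded curvature hypothesis combined with the Cao--Naber non-collapsing of Ricci shrinkers provides a uniform local Sobolev inequality and volume doubling on every ball of radius $\leq 1$, with constants independent of the base point $x$.

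Then I would carry out the iteration: test the inequality against $u^{p-1}\eta^2$ for $p\geq 2$ and a standard cutoff $\eta$ supported in $B_x(r)$, integrate by parts, and absorb the drift term via Cauchy--Schwarz at the price of a zeroth-order contribution bounded by a multiple of $K^2 \int u^p \eta^2$. Combined with the local Sobolev inequality on a dyadic sequence of radii $\rho_j \searrow r/2$, this produces a reverse H\"older chain of the usual Moser type, and summing the geometric series yields
\[
\sup_{B_x(r/2)} u^2 \leq \frac{C(1+a+K^2)^{N}}{\mathrm{Vol}(B_x(r))}\int_{B_x(r)} |u|^2
\]
for a dimensional exponent $N$; substituting $K\leq C(1+d(p,x))$ and absorbing the polynomial dependence into the constant $C_M$ produces the stated estimate.

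The main obstacle is the careful bookkeeping of constants through the iteration, in order to obtain explicit polynomial dependence on $a$ and $d(p,x)$ rather than the exponential factor one would get if the drift were handled less delicately. The weighted Cauchy--Schwarz split of the drift term must be balanced so that the effective zeroth-order coefficient $a + K^2$ enters the iteration with a controlled power, and the uniformity of the local Sobolev and volume-doubling constants on scales $r\leq 1$, inherited from bounded curvature and non-collapsing, is essential to prevent scale-dependent losses from compounding through the iteration.
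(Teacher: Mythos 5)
Your proposal follows essentially the same route as the paper's proof: rewrite $\Delta_f u \geq -au$ as an elliptic inequality with drift $\nabla f$, bound $|\nabla f|$ linearly in $d(p,x)$ via the soliton normalization, invoke the local Sobolev inequality on unit-scale balls coming from bounded curvature and non-collapsing, and run Moser iteration with the drift term absorbed by Cauchy--Schwarz into an effective zeroth-order coefficient of size $a+|\nabla f|^2$. The only discrepancy is cosmetic: your honest output $(1+a+K^2)^N$ carries a higher power of $a$ and $d(p,x)$ than the lemma's stated linear factor $C_M(a+1+d(p,x))$, but the paper's own proof (which ends with ``the standard Moser iteration yields the result'') is no more precise on this point, and only polynomial dependence on these quantities is used in the application to Theorem \ref{thm: estimate of spectral counting number}.
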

\begin{proof}
Since we have assumed bounded curvature,  by results in {\cite{LS1984}} and {\cite{Sal1992}}, for any $r < 1$, there is a Sobolev inequality
\[
\left( \int_{B_x(r)} u^{\frac{2n}{n-2}} \right)^{\frac{n-2}{n}} \leq \frac{C_S r^2}{Vol(B_x(r))^{2/n}} \int_{B_x(r)} |\n u|^2,
\]
for any $u\in C_0^1(B_x(r))$, where $p$ is any point on $M$, where the Sobolev constant $C_S$ depends on the dimension and the curvature bound. This Sobolev inequality leads to a mean value inequality for nonnegative solutions of the differential inequality
\begin{equation}\label{eqn: a differential inequality for f-Laplacian}
\Delta_f u \geq - a u.
\end{equation}
The proof is by Moser-iteration, the only difference to the standard case is the drift term $\la \n f, \n u\ra$, which can be handled as follows: 

Let $\phi$ be any cut-off function supported on $B_x(r)$, then for any nonnegative $u$ satisfying (\ref{eqn: a differential inequality for f-Laplacian}), and any $p \geq 1$, we have 
\[
\begin{split}
a \int u^{2p} \phi^2 \geq & \int - \phi^2 u^{2p-1}\Delta u + \phi^2 u^{2p-1}\langle \n f, \n u\rangle \\
= & \int (2p-1)\phi^2 u^{2p-2} |\n u|^2 + 2 \phi u^{2p-1} \langle \n \phi, \n u\rangle + \phi^2 u^{2p-1}\langle \n f, \n u\rangle. \\
\end{split}
\]
By Cauchy-Schwarz inequality, we get
\[
(2p-\frac{3}{2})\int \phi^2 u^{2p-2} |\n u|^2 \leq a \int \phi^2 u^{2p} + 4 \int |\n \phi|^2 u^{2p} + \int_{spt \phi} |\n f|^2 u^{2p}.
\]
On a gradient Ricci shrinker we have $|\n f| \leq \frac{1}{2} d(p, \cdot) + c(n)$, where $p$ is a  minimal point of $f$. Then the standard Moser iteration yields the result. 
\end{proof}

Now we can prove the estimate of the spectral counting number. 

{
\begin{proof}[Proof of Theorem \ref{thm: estimate of spectral counting number} ]
Take $R_0 = C(n, \epsilon)(1+\sqrt{\lambda +\Lambda})$ as in Theorem \ref{thm: frequency upper bound}, let $\alpha = 1+ \frac{2}{\lambda + \Lambda}$. We can take $C(n, \epsilon)$ larger if necessary, so that 
\[
1-\epsilon < |\n b|
\]
when $b\geq R_0$.
Let $u$ be any finite linear combination of eigensections with eigenvalue at most $\lambda$, define the quantities $I(r), D(r), U(r)$ as in the previous section. Then by (\ref{eqn: derivative of I - eigensection}), we have 
\[
 I'(r) \leq \frac{2D(r)}{r}
\]
for all $r> R_0$. Hence by Theorem \ref{thm: frequency upper bound}, we have
\[
 (\ln I)' \leq \frac{2U}{r} \leq  \frac{2(2+\epsilon)(\lambda + \Lambda)}{r}.
\]
Integrating the above inequality from $r$ to $\alpha r$ yields
\begin{equation}\label{eqn: doubling control of I}
\frac{I(\alpha r)}{I(r)} \leq \alpha^{2(2+\epsilon) (\lambda +\Lambda)}.
\end{equation}
Let $r_0 = R_0$, $r_i = \alpha r_{i-1}$ for $i = 1,2.$ 
Define
\[
J_i = \int_{r_0}^{r_i} s^{n-1} I(s) ds = \int_{r_0< b < r_i} |u|^2 |\n b|^2 .
\]
By a change of variable, we have
\[
J_{i+1} - J_i = \int_{r_i}^{r_{i+1}} s^{n-1}I(s) ds = \int_{r_{i-1}}^{r_i} \alpha^{n-1} t^{n-1}I(\alpha t) \alpha dt ,
\]
hence by (\ref{eqn: doubling control of I}) we have
\[
{J_{2} - J_1 }\leq  \alpha^{n+ 2(2+\epsilon) (\lambda +\Lambda)}{ (J_1 - J_{0})} 
\]
where $J_0 = 0$, from which we can derive that
\begin{equation}\label{eqn: doubling on balls}
\int_{b < r_2} |u|^2 |\n b|^2 = \int_{b< r_0} |u|^2 |\n b|^2 + J_1 + (J_2 - J_1) \leq (n+\alpha^{n+ 2(2+\epsilon) (\lambda +\Lambda)}) \int_{b < r_1} |u|^2 |\n b|^2.
\end{equation}
Define an inner product on the space of sections that are $L^2$ on $\{b < r_2\}$ by
\[
K(u, v) = \int_{b< r_2} \langle u, v \rangle |\n b|^2. 
\]
Let $\mathcal{E}_\lambda$ be the linear space generated by eigensections with eigenvalues at most $\lambda$,  then $\dim \mathcal{E}_\lambda = \mathcal{N}(\lambda)$.
Let $u_1, u_2, ..., u_{\mathcal{N}(\lambda)}$ be a basis of $\mathcal{E}_\lambda$, which are orthonormal w.r.t. the inner product $K(,)$. Define
\[
F(x) : = \sum_{i = 1}^{ \mathcal{N}(\lambda) } |u_i|^2(x).
\]
Note that $F(x)$ is independent of the choice of orthonormal basis $u_1, u_2, ..., u_{\mathcal{N}(\lambda)}$. We have 
\[
\mathcal{N}(\lambda) = \int_{b< r_2} F \leq (1+\alpha^{n+ 2(2+\epsilon)(\lambda + \Lambda)} )\int_{b< r_1} F,
\]
where we have applied (\ref{eqn: doubling on balls}) to each of $u_i$, $i = 1,2,..., \mathcal{N}(\lambda)$.

Let $x$ be the maximum point of $F$ in $\{b< r_1\}$, there is a subspace $\mathcal{E}_\lambda(x)$ of codimension at most $rank(E)$ consisting of sections vanishing at $x$ (see \cite{Li1997}). Up to an orthonormal change of basis, we can assume WLOG that $u_1, u_2, ..., u_k \in \mathcal{E}_\lambda(x)$, and the rest of the basis vectors are orthogonal to $\mathcal{E}_\lambda(x)$, hence $\mathcal{N}(\lambda) - k \leq rank E$. Then we have
\[
\int_{b< r_1} F \leq Vol(b< r_1)\sum_{i = 1}^{ \mathcal{N}(\lambda) } |u_i|^2(x) =  Vol(b< r_1) \sum_{i = k+1}^{ \mathcal{N}(\lambda) } |u_i|^2(x).
\]
Note that by (\ref{eqn: differential inequality for the norm square}), Lemma \ref{lem: mean value inequality} can be applied to each $u_i, i = k+1,..., \mathcal{N}(\lambda)$ on $B_x(\frac{1}{\sqrt{\lambda + \Lambda}})$ to get
\[
|u_i|^2(x) \leq \frac{C_M(1+\lambda + \Lambda + d(p, x))}{Vol(B_x(\frac{1}{\sqrt{\lambda + \Lambda} }))} \int_{B_x(\frac{1}{\sqrt{\lambda + \Lambda}})} |u_i|^2.
\]
Note that for any $y \in B_x(\frac{1}{\sqrt{\lambda + \Lambda}})$, we have $b(y) \leq b(x) + \frac{1}{\sqrt{\lambda +
\Lambda}}$, and $r_2 - r_1 \geq (\alpha - 1) R_0 \geq 2C(n, \epsilon) \frac{1}{\sqrt{\lambda + \Lambda}}$, where $C(n, \epsilon)$ is taken to be a large constant, hence $B_x(\frac{1}{\sqrt{\lambda+\Lambda}}) \subset \{b< r_2\}$. Then we have 
\[
\int_{B_x(\frac{1}{\sqrt{\lambda + \Lambda}})} |u_i|^2 \leq 1+\epsilon,
\] since $u_i$ is normalized on $\{b< r_2\}$ with respect to the inner product $\mathcal{K}(,)$.

By the assumption of bounded curvature, and since gradient shrinking Ricci solitons are noncollapased \cite{CN2009}, there exists a constant $v_0 > 0$ such that 
\[
Vol(B_x(\frac{1}{\sqrt{\lambda + \Lambda} })) \geq \frac{v_0}{(\sqrt{\lambda +\Lambda})^n}.
\]
Therefore, the estimates above yield that 

\[
\mathcal{N}(\lambda) \leq rank(E) C_1(1 + \lambda + \Lambda + \alpha^2 R_0) (\lambda + \Lambda)^{\frac{n}{2}} (1 + \alpha^{n+ 2(2+\epsilon)(\lambda + \Lambda)}) Vol(b < r_1),
\]
where $C_1$ is a constant depending on $n, v_0$ and the curvature bound. 
Now, fix a small value for $\epsilon$ and note that $ R_0 = C_0(n, \epsilon )(1 + \sqrt{\lambda + \Lambda})$, the constant $\alpha^{\lambda +\Lambda}$ is bounded, and by \cite{CZ2010} we have
\[
Vol(b< r_1) \leq C  r_1^n \leq C \alpha^{n}R_0^n,
\]
thus we get
\[
\mathcal{N}(\lambda) \leq C rank(E) (1+\lambda + \Lambda)^{n+1}
\]
for some constant $C$ depending on the geometry. 
\end{proof}

}
\section{Holomorphic sections}\label{section: holomorphic sections}
\subsection{General discussion}
In this section, let $(M^{2m}, g, f)$ be a gradient K\"ahler Ricci shrinker with complex dimension $m$ and real dimension $n = 2m$. In order to use the result{\color{blue}s} in the previous sections, we want to relate holomorphic sections with polynomial growth to eigensections of some Elliptic operator on the gradient Ricci shrinker. We have already done this for holomorphic functions and $(p,0)$-forms in our previous work \cite{HO2024}. Similar idea has been used in \cite{CM2020} to relate eigen-functions of a drifted Laplacian on a mean curvature self-shrinker to ancient caloric functions along the associated mean curvature flow. For sections of a Hermitian vector bundle it is more complicated, and the choice of the Hermitian metric on the bundle is important. 

Let's first do some general discussion before turning to specific cases. Let $E \to M$ be a holomorphic vector bundle over $M$ equipped with a Hermitian metric $\la, \ra$, let $\n$ be the unique Hermitian connection on $E$ such that $\n$ is compatible with the metric $\la ,\ra$, and satisfies $\n^{0,1} = \dbar$. 

Let $u$ be a holomorphic section of $E$, locally we can write $u = u_i  e_i$, where $\{e_i\}$ is a local holomorphic frame of $E$. Then the functions $u_i$ are holomorphic, hence we have 
\begin{equation}\label{eqn: laplacian of a holomorphic section}
\Delta u = \Delta u_i  e_i - tr \Theta (u) =  -tr \Theta (u),
\end{equation}
where $\Theta$ is the endomorphism-valued curvature $(1,1)$-form, and we denote
\[
\Delta = g^{i \jb} (\n_i \n_{\jb} + \n_{\jb} \n_i), \quad tr \Theta = g^{i \jb}\sum \Theta ( \pd_i, \pd_{\bar{j}} ),
\]
to be consistant with the notations in the Riemannian case. 

There is a canonical way to associate a Ricci flow solution to a gradient Ricci soliton. Let $\tau(t) = -t$, and let $\Phi_t$ be the family of diffeomorphisms generated by 
\begin{equation}\label{eqn: definition of canonical diffeomorphisms}
\pd_t \Phi_t(x) = \frac{1}{\tau(t)} \n f( \Phi_t(x)), \quad t \in (-\infty, 0), \quad \Phi_{-1} = id.
\end{equation}
Let 
\begin{equation}\label{eqn: canonical Ricci flow}
g(t) = \tau(t) \Phi_t^* g. 
\end{equation}
Then $g(t)$ is a solution of the Ricci flow. Since holomorphicity is independent of $t$, the equation $\Delta_{g(t)} u = -tr_{g(t)} \Theta (u)$ holds for all $t\in (-\infty, 0)$. 
Define
\begin{equation}\label{eqn: definition of u hat}
\hat{u}(x, s) = \left( \Phi_t^{-1} \right)^*u(x), \quad \text{where } t = - e^{-s}, \quad s \in (-\infty, \infty). 
\end{equation}
Note that $\pd_s = \tau(t) \pd_t$. We can calculate that
\begin{equation}\label{eqn: heat type equation for a transformation of holomorphic section}
\begin{split}
\pd_s \hat{u}(x, s) = & \tau \left( (\Delta_{g(t)} u)(\Phi_t^{-1}(x) ) + tr_{g(t)} \Theta(u)(\Phi_t^{-1}(x) )   - \mathcal{L}_{\frac{1}{\tau} \n f} u(\Phi_t^{-1}(x) ) \right) \\
= & \Delta_g \hat{u}(x, s) + tr_{g} \tilde{\Theta}_s (\hat{u}(x, s)) - \mathcal{L}_{\n f} \hat{u} (x, s),
\end{split}
\end{equation}
 where $\tilde{\Theta}_s : = \Theta |_{\Phi^{-1}_{- e^{-s}}}$.
 
We would like to write the operator on the right hand side of (\ref{eqn: heat type equation for a transformation of holomorphic section}) in the form of (\ref{eqn: type of elliptic operator}) in order to use results in the previous sections, however, the term $\tilde{\Theta}_s$ may depend on $s$ in general. 
In some cases when the hermitian metric on $E$ were induced by the K\"ahler metric $g$, we could have $\tilde{\Theta}_s$ independent of $s$. In the rest of this section we consider two special cases: the first one is the bundle $\Lambda^{p,0}$ of $(p,0)$-forms, the second one is $K_M^{-q}$, the $q$-th power of the anti-canonical line bundle, both are equipped with the induced metric. 

\subsection{Holomorphic $(p, 0)$-forms.}

Before turning to holomorphic forms, let's first compute the weighted Hodge Laplacian $\Delta^d_f$ on a smooth Riemannian manifold with a weighted measure $dv_f = e^{-f} dv$. Our purpose is to write it in the form of (\ref{eqn: type of elliptic operator}). In this calculation we do not need the soliton structure. 

Let $d$ be the exterior derivative on differential forms, and let $d_f^*$ be its $L^2(dv_f)$-dual operator, then 
\[
d_f^* = d^* + i_{\n f},
\]
where $d^*$ is the usual $L^2(dg)$-dual of $d$, and $i$ denotes the interior product. Define the weighted Hodge Laplacian to be 
\[
\Delta_f^d = d^*_f d + d d^*_f .
\]
One can check that 
\[
\Delta_f^d = \Delta^d + \mathcal{L}_{\n f},
\]
where $\Delta^d$ is the usual Hodge Laplacian and $\mathcal{L}$ is the Lie derivative. 

Let $\Delta = tr \n^2$ be the rough Laplacian, we have the well-known Weitzenbock formula
\[
\Delta^d  \w = - \Delta \w+ \mathcal{R}(\w),
\]
where $\mathcal{R}(\w)$ is the curvature term.

Let $\langle \cdot, \cdot \rangle$ be the inner product on $\Lambda^p$ induced by the Riemannian metric, and choose an orthonormal coframe $\theta^1, \theta^2, ..., \theta^n$ w.r.t. this inner product. Let $\w$ be a $p$-form written as
\[
\w = \frac{1}{p !} \sum_{i_1 i_2,..., i_p}\w_{i_1 i_2,..., i_p} \theta^{i_1} \wedge \theta^{i_2} \wedge ... \wedge \theta^{i_p},
\]
and let $\eta$ be another $p$-form written similarly, then we have the local formula
\[
\langle \w, \eta \rangle =  \frac{1}{p !} \sum_{i_1 i_2,..., i_p} \w_{i_1 i_2,..., i_p} \eta_{i_1 i_2,..., i_p},
\]
and the curvature term can be written as
\[
\mathcal{R}(\w) = \sum R_{i_k j_k i_l j_l} \w_{i_1 ...j_k...i_p} \theta^{i_1} \wedge ... \wedge \theta^{j_l} \wedge ...\wedge \theta^{i_{p+1}}.
\]
By Cartan's magic formula that $\mathcal{L}_X = i_X d + d i_X$, we have
\begin{equation}\label{eqn: eigen-equation}
\Delta_f^d \w = - \Delta \w  + \mathcal{R}(\w) + i_{\n f} d \w + d i_{\n f} \w.
\end{equation}
Define
\begin{equation}\label{eqn: definition of R_f}
\mathcal{R}_f(\w) = \mathcal{R}(\w) + \frac{1}{p!}\sum_{}f_{i_1 i_2} \w_{i_2 i_3 ... i_{p+1} }  \theta^{i_1} \wedge \theta^{i_3}\wedge .. \wedge \theta^{i_{p+1}}.
\end{equation}
\begin{lem}\label{lem: rewriting the f-Hodge Laplacian}(Weighted Weitzenbock formula.)
\[
\Delta_f^d \w = - \Delta \w + \n_{\n f} \w + \mathcal{R}_f(\w). 
\]
\end{lem}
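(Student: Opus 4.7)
The plan is to pick up from equation (\ref{eqn: eigen-equation}), which has just been derived above the lemma statement:
\[
\Delta_f^d \w = -\Delta \w + \mathcal{R}(\w) + i_{\n f} d\w + d i_{\n f}\w.
\]
By Cartan's magic formula, the last two terms equal $\mathcal{L}_{\n f}\w$. Thus the proof reduces to showing
\[
\mathcal{L}_{\n f}\w - \n_{\n f}\w = \frac{1}{p!}\sum f_{i_1 i_2}\w_{i_2 i_3\ldots i_{p+1}}\theta^{i_1}\wedge \theta^{i_3}\wedge\cdots\wedge \theta^{i_{p+1}},
\]
i.e.\ I only need to identify the difference between the Lie and covariant derivative of $\w$ along $\n f$ with the Hessian-of-$f$ summand in the definition (\ref{eqn: definition of R_f}) of $\mathcal{R}_f$.

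The main step is the standard identity, valid for any torsion-free connection, any vector field $X$, and any $p$-form $\w$:
\[
(\mathcal{L}_X \w - \n_X \w)(Y_1,\ldots,Y_p) = \sum_{k=1}^p \w(Y_1,\ldots,\n_{Y_k} X,\ldots,Y_p).
\]
This follows because $\mathcal{L}_X$ and $\n_X$ satisfy the same Leibniz rule on $\w(Y_1,\ldots,Y_p)$, so the ``$X$ differentiates the components'' terms cancel, and because the torsion-free condition gives $\n_X Y_k - [X,Y_k] = \n_{Y_k} X$. I would simply invoke this well-known identity.

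Specializing $X = \n f$, in the orthonormal frame $\{e_i\}$ dual to $\{\theta^i\}$ one has $\n_{e_j}\n f = \sum_l f_{jl} e_l$, where $f_{jl}$ are the Hessian components of $f$. Substituting into the identity above, evaluating the resulting $p$-form on $(e_{j_1},\ldots,e_{j_p})$, and re-expanding in the wedge basis (using the antisymmetry of $\w$ in its indices) reproduces the Hessian-of-$f$ summand appearing in the definition of $\mathcal{R}_f(\w)$. Adding back the curvature term $\mathcal{R}(\w)$ from (\ref{eqn: eigen-equation}) gives $\mathcal{R}_f(\w)$ and completes the proof. The argument is essentially index-level bookkeeping built from three classical ingredients (the Weitzenbock formula, Cartan's magic formula, and the Lie-vs-covariant comparison for torsion-free connections); the only step requiring a little care is the combinatorics of antisymmetrization in the final identification, and I do not anticipate any real obstacle.
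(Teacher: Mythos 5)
Your argument is correct, and it arrives at the same key identity as the paper but by a genuinely shorter route. The paper also starts from (\ref{eqn: eigen-equation}) and must show $\mathcal{L}_{\n f}\w = \n_{\n f}\w + (\text{Hessian term})$; it does so by brute force, expanding $i_{\n f}\w$, $d\,i_{\n f}\w$, $d\w$ and $i_{\n f}d\w$ in the orthonormal coframe and tracking the cancellations by splitting the permutation sum according to the value of $\sigma(1)$. You replace that entire computation with a single invocation of the classical torsion-free comparison $(\mathcal{L}_X\w - \n_X\w)(Y_1,\dots,Y_p)=\sum_k \w(Y_1,\dots,\n_{Y_k}X,\dots,Y_p)$, which with $X=\n f$ immediately yields the Hessian term; this is cleaner and more conceptual, while the paper's version is more self-contained (it effectively re-derives that identity in the special case $X=\n f$). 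One caution about the ``combinatorics of antisymmetrization'' you defer to the end: when you rewrite $\sum_k \w(\dots,\n_{Y_k}\n f,\dots)$ in the wedge basis, the natural prefactor is $\frac{1}{(p-1)!}$ (one slot of $\w$ is contracted against the Hessian), which is $p$ times the $\frac{1}{p!}$ appearing in the displayed definition (\ref{eqn: definition of R_f}); the paper's own intermediate formula for $d\,i_{\n f}\w$ in fact carries the $\frac{1}{(p-1)!}$, so this is a normalization slip in the write-up rather than an obstacle to your argument, but you should fix your conventions explicitly when you carry out the final bookkeeping so that your Hessian term visibly matches the one in $\mathcal{R}_f$.
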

\begin{proof}
The proof is by direct calculation. First
\[
i_{\n f} \w = \frac{1}{(p-1)!} \sum_{i_1, i_2, ..., i_p } f_{i_1} \w_{i_1 i_2 ... i_p} \theta^{i_2} \wedge ... \wedge \theta^{i_p}.
\]
Taking exterior derivative yields
\[
d i_{\n f} \w = \frac{1}{(p-1)!} \sum_{i_1, i_2, ..., i_p } \sum_k \left(  f_{i_1 k} \w_{k  i_2 ... i_p} + f_k \n_{i_1}\w_{k i_2 ... i_p} \right) \theta^{i_1} \wedge \theta^{i_2} \wedge ... \wedge \theta^{i_p}.
\]
Observe that
\[
\begin{split}
d\w = & \sum_{k} \theta^k \wedge \n_k \w\\
= & \frac{1}{p !} \sum_{i_1 i_2,..., i_p, i_{p+1}} \n_{i_1 }\w_{i_2, ..., i_{p+1}} \theta^{i_1}\wedge \theta^{i_2} \wedge ... \wedge \theta^{i_{p+1}} \\
= & \frac{1}{ (p+1)!} \sum_{i_1 i_2,..., i_p, i_{p+1}} \left( \frac{1}{p !} \sum_{\sigma \in S_{p+1}} sgn(\sigma) \n_{i_{\sigma(1)} }\w_{i_{\sigma(2)}, ..., i_{\sigma (p+1)}} \right) \theta^{i_1}\wedge \theta^{i_2} \wedge ... \wedge \theta^{i_{p+1}}.
\end{split}
\]
Thus
\[
\begin{split}
i_{\n f} d\w = & \frac{1}{p!} \sum_{i_1, i_2, ..., i_{p+1}} f_{i_1} \left( \frac{1}{p!} \sum_{\sigma \in S_{p+1}} sgn(\sigma)\n_{i_{\sigma(1)}} \w_{i_{\sigma(2)} ..., i_{\sigma(p+1)}}  \right) \theta^{i_2} \wedge ... \wedge \theta^{i_{p+1}}, \\
\end{split}
\]
we break the second summation according to the value of $\sigma(1)$ to get
\[
\begin{split}
i_{\n f} d\w= &  \frac{1}{p!} \sum_{i_1, i_2, ..., i_{p+1}} f_{i_1} \left( \frac{1}{p!} \sum_{\sigma \in S_{p+1}, \sigma(1) = 1} sgn(\sigma)\n_{i_{1}} \w_{i_{\sigma(2)} ..., i_{\sigma(p+1)}}  \right) \theta^{i_2} \wedge ... \wedge \theta^{i_{p+1}}\\
 &+  \frac{1}{p!} \sum_{i_1, i_2, ..., i_{p+1}} f_{i_1} \left( \frac{1}{p!} \sum_{\sigma \in S_{p+1}, \sigma(1) = 2} sgn(\sigma)\n_{i_{2}} \w_{ i_{\sigma(2)} ..., i_{\sigma(p+1)}}  \right) \theta^{i_2} \wedge ... \wedge \theta^{i_{p+1}}\\
&+ ...\\
& +  \frac{1}{p!} \sum_{i_1, i_2, ..., i_{p+1}} f_{i_1} \left( \frac{1}{p!} \sum_{\sigma \in S_{p+1}, \sigma(1) = p+1} sgn(\sigma)\n_{i_{p+1}} \w_{i_{\sigma(2)} ..., i_{\sigma(p+1)} }  \right) \theta^{i_2} \wedge ... \wedge \theta^{i_{p+1}},\\
\end{split}
\]
which simplifies to 
\[
\begin{split}
i_{\n f} d\w = & \frac{1}{p!} \sum_{i_1, i_2, ..., i_{p+1}} f_{i_1}  \n_{i_{1}} \w_{i_{2} ..., i_{p+1}}   \theta^{i_2} \wedge ... \wedge \theta^{i_{p+1}}\\
 &+  \frac{1}{p!} \sum_{i_1, i_2, ..., i_{p+1}} (-1)f_{i_1} \n_{i_{2}} \w_{i_{1} i_3... i_{p+1}}  \theta^{i_2} \wedge ... \wedge \theta^{i_{p+1}}\\
&+ ...\\
&+  \frac{1}{p!} \sum_{i_1, i_2, ..., i_{p+1}} (-1)^p f_{i_1} \n_{i_{p+1}} \w_{i_1 i_{2} i_3... i_{p} }  \theta^{i_2} \wedge ... \wedge \theta^{i_{p+1}},\\
\end{split}
\]
then we observe that
\[
\begin{split}
i_{\n f} d\w = & \frac{1}{p!} \sum_{i_1, i_2, ..., i_{p+1}} f_{i_1}  \n_{i_{1}} \w_{i_{2} ..., i_{p+1}}   \theta^{i_2} \wedge ... \wedge \theta^{i_{p+1}}\\
&  - \frac{1}{(p-1)!} \sum_{i_1, i_2, ..., i_{p+1}} f_{i_1} \n_{i_{2}} \w_{i_{1} i_3... i_{p+1}}  \theta^{i_2} \wedge ... \wedge \theta^{i_{p+1}}.
\end{split}
\]
Combining the above formulas, we get
\[
\begin{split}
\mathcal{L}_{\n f} \w = & d i_{\n f} \w + i_{\n f } d\w \\
 = &   \frac{1}{p!} \sum_{i_1, i_2, ..., i_{p+1}} \left(  f_{i_2 i_1} \w_{i_1 i_3 ... i_{p+1}} +  f_{i_1}  \n_{i_{1}} \w_{i_{2} ..., i_{p+1}}   \right)\theta^{i_2} \wedge ... \wedge \theta^{i_{p+1}}.
\end{split}
\]
Observe that 
\[
\mathcal{R}(\w) + \mathcal{L}_{\n f} \w = \mathcal{R}_f(\w) + \n_{\n f} \w,
\]
this finishes the proof. 
\end{proof}

Now let's focus on the bundle $\Lambda^{p, 0}$ of $(p, 0)$-forms on a complete K\"ahler Ricci shrinker $(M, g, f)$. It was proved in \cite{MW2015} that $\Delta^d_f$ preserves the type of $(p, q)$-forms since $\n f$ is the real part of a holomorphic vector field on a gradient K\"ahler Ricci shrinker. 
The following lemma was proved in \cite{HO2024},
\begin{lem}\label{lem: mapping holomorphis forms to solutions of heat equation}
Let $\w$ be a holomorphic $(p, 0)$-form on a complete gradient Ricci shinker $(M, g, f)$. Let $\Phi_t$ be the family of diffeomorphisms defined in (\ref{eqn: definition of canonical diffeomorphisms}), the form
\[
\hat{\w} (x, s) := (\Phi^{-1}_{-e^{-s}})^*\w(x) , \quad (x, s) \in M \times (-\infty, \infty)
\] 
is an ancient solution of the heat type equation
\begin{equation}\label{eqn: heat equation for the f Hodge Laplacian}
\pd_s \hat {\w} + \Delta_f^d \hat{\w} = 0,
\end{equation}
where $\Delta^d_f$ is the weighted Hodge Laplacian on $(M, g, f)$. 
And the map sending $\w$ to $\hat{\w}$ is a linear injection from the space of holomorphic $(p, 0)$ forms to the space of $(p, 0)$-forms satisfying (\ref{eqn: heat equation for the f Hodge Laplacian}) for $s\in (-\infty, \infty)$. 
\end{lem}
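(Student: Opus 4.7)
The plan is to compute $\pd_s \hat{\w}$ and $\Delta_f^d \hat{\w}$ separately and verify they cancel. The key simplification is that the change of variables $t = -e^{-s}$ turns the time-dependent generator $\tau^{-1}\n f$ of $\Phi_t$ into the time-independent vector field $\n f$. Indeed, setting $\sigma_s := \Phi_{-e^{-s}}$, a direct computation yields
\[
\pd_s \sigma_s = \tfrac{dt}{ds}\,\pd_t \Phi_t\big|_{t=-e^{-s}} = e^{-s}\cdot e^s\,\n f \circ \sigma_s = \n f \circ \sigma_s, \qquad \sigma_0 = \Phi_{-1} = \mathrm{id}.
\]
Hence $\sigma_s$ is the one-parameter group of $\n f$, and $\psi_s := \sigma_s^{-1} = \sigma_{-s}$ is the flow of $-\n f$.

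Since $\hat{\w}(\cdot, s) = \psi_s^* \w$, the standard formula for differentiating a pullback along a flow gives $\pd_s \hat{\w} = \psi_s^* \mathcal{L}_{-\n f}\w = -\psi_s^* \mathcal{L}_{\n f}\w$. Because $\n f$ is invariant under its own flow, $\psi_s^*$ commutes with $\mathcal{L}_{\n f}$, and we obtain $\pd_s \hat{\w} = -\mathcal{L}_{\n f}\hat{\w}$. For the Hodge term, I would use that $\n f$ is the real part of the holomorphic vector field $g^{j\kb}f_{\kb}\pd_j$ on a K\"ahler Ricci shrinker, so each $\Phi_t$ is a biholomorphism; thus $\hat{\w}(\cdot, s)$ remains a holomorphic $(p,0)$-form for every $s$. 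On a K\"ahler manifold $\Delta^d = 2(\dbar^*\dbar + \dbar\dbar^*)$, and for a holomorphic $(p,0)$-form both $\dbar\hat{\w} = 0$ (holomorphicity) and $\dbar^*\hat{\w} = 0$ (bidegree reasons, since $\dbar^*$ lowers antiholomorphic degree which is already zero), so $\Delta^d \hat{\w} = 0$. Using $\Delta_f^d = \Delta^d + \mathcal{L}_{\n f}$ from Section 2.2 then gives $\Delta_f^d \hat{\w} = \mathcal{L}_{\n f}\hat{\w}$, which combined with the previous identity yields $\pd_s \hat{\w} + \Delta_f^d \hat{\w} = 0$.

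Injectivity of $\w \mapsto \hat{\w}$ is immediate from $\Phi_{-1} = \mathrm{id}$: evaluation at $s = 0$ returns $\hat{\w}(\cdot, 0) = \w$, and this evaluation is a left inverse of the map. I do not foresee any serious technical obstacle: the whole argument is essentially a calculation, and the only care required is bookkeeping the signs associated with $\Phi_t$ versus $\Phi_t^{-1}$ and the time change $t = -e^{-s}$, together with confirming the commutation of $\psi_s^*$ with $\mathcal{L}_{\n f}$, which is where the self-similarity of the Ricci shrinker enters in a clean form.
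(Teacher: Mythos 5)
Your proof is correct and takes essentially the same route the paper indicates for this lemma (the paper defers the details to [HO2024] but summarizes the argument as resting on the self-similar structure of the shrinker and the harmonicity of holomorphic $(p,0)$-forms on K\"ahler manifolds): you correctly reduce $\pd_s \hat{\w}$ to $-\mathcal{L}_{\n f}\hat{\w}$ via the time reparametrization $t=-e^{-s}$, and kill the Hodge term by noting $\dbar\hat{\w}=0$ and $\dbar^*\hat{\w}=0$ for bidegree reasons together with $\Delta^d=2\Delta_{\dbar}$. The sign bookkeeping, the commutation of $\psi_s^*$ with $\mathcal{L}_{\n f}$, and the injectivity via evaluation at $s=0$ are all handled correctly.
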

The proof depends on the self-similar structure of gradient Ricci shrinker, and the fact that holomorphic $(p, 0)$-forms are harmonic on K\"ahler manifolds. 
We have proved in \cite{HO2024} under the assumption of bounded Ricci curvature that $\Delta^d_f$ acting on $L^2(dv_f)$-forms has discrete spectrum, its eigen-forms provide a complete basis for the $L^2(dv_f)$ space.  Moreover, we have shown in \cite{HO2024} that
\begin{lem}\label{lem: linear expansion for solutions of the f heat equation of forms}
Suppose $(M, g, f)$ is a complete gradient K\"ahler Ricci shrinker with bounded Ricci curvature, let $K = \sup_M|Ric| + \frac{1}{2}$. Then for any holomorphic $(p, 0)$-form $\w$ with polynomial growth of order at most $\mu$, there exist finitely many coefficients $a_i , i = 1,2,...$ such that 
\[
\hat{\w} = \sum_{\lambda_i \leq \mu/2 + p K} a_i e^{-\lambda_i s} \theta_i,
\]
where $\lambda_i$, $\theta_i$, $i = 1,2,...$ are the eigenvalues and $(p,0)$-eigenforms of $\Delta^d_f$. 
\end{lem}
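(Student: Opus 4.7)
By Lemma \ref{lem: mapping holomorphis forms to solutions of heat equation}, the family $\hat{\w}(x,s) := (\Phi_{-e^{-s}}^{-1})^*\w(x)$ solves $\pd_s \hat{\w} + \Delta_f^d \hat{\w} = 0$ on $M \times \R$ with $\hat{\w}(\cdot, 0) = \w$. Writing $\psi_s := \Phi_{-e^{-s}}$, a direct computation gives $\pd_s \psi_s = \n f \circ \psi_s$ with $\psi_0 = \mathrm{id}$, so $\psi_s$ is the flow of $\n f$, $\psi_s^{-1} = \psi_{-s}$, and $\hat{\w}(\cdot, s) = \psi_{-s}^* \w$. Since $\Delta_f^d$ preserves the $(p,0)$-type and, under the bounded Ricci assumption, has discrete $L^2(dv_f)$-spectrum with orthonormal eigen-$(p,0)$-forms $\{\theta_i\}$ and eigenvalues $\{\lambda_i\}$ (shown in \cite{HO2024}), we may expand $\hat{\w}(\cdot, s) = \sum_i a_i(s) \theta_i$. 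Substituting into the evolution equation forces $a_i'(s) + \lambda_i a_i(s) = 0$, hence $a_i(s) = c_i e^{-\lambda_i s}$. It remains to show $c_i = 0$ whenever $\lambda_i > \mu/2 + pK$.

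The heart of the argument is the $L^2$-growth estimate
\[
\|\hat{\w}(\cdot, -\sigma)\|_{L^2(dv_f)}^2 \leq C \, e^{(\mu + 2pK)\sigma}, \qquad \sigma \geq 0,
\]
which I would establish pointwise from $\hat{\w}(\cdot, -\sigma) = \psi_\sigma^* \w$. For a $p$-form one has $|\psi_\sigma^*\w(y)|_g \leq C_p \, |d\psi_\sigma(y)|_{op}^{\,p} \, |\w(\psi_\sigma(y))|_g$. Linearizing the flow, $v(\sigma) := d\psi_\sigma(v_0)$ satisfies $\tfrac{D v}{d\sigma} = \n^2 f(v, \cdot)^{\sharp}$; on the shrinker $\n^2 f = \tfrac12 g - Ric$, so $|\n^2 f|_{op} \leq K$, and Gr\"onwall yields $|d\psi_\sigma|_{op} \leq e^{K\sigma}$. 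Along a flow line, $\tfrac{d}{d\sigma}(f\circ\psi_\sigma) = |\n f|^2 \circ \psi_\sigma \leq f \circ \psi_\sigma$, so $f(\psi_\sigma(y)) \leq e^\sigma f(y)$, and the quadratic bounds of \cite{CZ2010} give $r(\psi_\sigma(y)) \leq e^{\sigma/2}(r(y) + c)$. The polynomial growth of $\w$ then yields $|\w(\psi_\sigma(y))|_g \leq C e^{\mu\sigma/2}(1 + r(y))^\mu$. Combining,
\[
|\hat{\w}(y, -\sigma)|_g^2 \leq C \, e^{(2pK + \mu)\sigma} (1 + r(y))^{2\mu},
\]
and integrating against $e^{-f} dv_g$ (the Gaussian weight absorbs $(1+r)^{2\mu}$ since $f \sim r^2/4$) gives the claimed bound.

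With this estimate and orthonormality of $\{\theta_i\}$,
\[
|c_i|^2 e^{2\lambda_i \sigma} = |a_i(-\sigma)|^2 \leq \|\hat{\w}(\cdot, -\sigma)\|_{L^2(dv_f)}^2 \leq C \, e^{(\mu + 2pK)\sigma}
\]
for every $\sigma > 0$. When $\lambda_i > \mu/2 + pK$ the exponent $\mu + 2pK - 2\lambda_i$ is negative, and letting $\sigma \to \infty$ forces $c_i = 0$. Only finitely many eigenvalues satisfy $\lambda_i \leq \mu/2 + pK$, so the expansion reduces to a finite sum $\hat{\w}(\cdot, s) = \sum_{\lambda_i \leq \mu/2 + pK} c_i e^{-\lambda_i s} \theta_i$, as claimed. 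The main technical obstacle is the pointwise pullback estimate: controlling $|d\psi_\sigma|_{op}$ and the flow-line growth of $r$ uniformly in $y$, both of which follow from the soliton identity together with the bounded Ricci hypothesis.
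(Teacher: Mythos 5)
Your proposal is correct and follows essentially the same strategy the paper uses: the paper cites \cite{HO2024} for this lemma rather than reproving it, but its proof of the analogous Proposition \ref{prop: control the dimension by counting number} for $K_M^{-q}$ proceeds exactly as you do — pull back along the flow of $\n f$, estimate $\|\hat{\w}(\cdot,s)\|_{L^2(dv_f)}$ for $s<0$ via the expansion bounds $|d\psi_\sigma|_{op}\le e^{K\sigma}$ (from $|\n^2 f|\le \tfrac12+\sup|Ric|$) and $r(\psi_\sigma(y))\le e^{\sigma/2}(r(y)+c)$, and kill the Fourier modes with $\lambda_i>\mu/2+pK$ by comparison with $e^{-2\lambda_i s}$. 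The only point to flesh out is the derivation of $a_i'+\lambda_i a_i=0$ (equivalently, the uniqueness of the $L^2(dv_f)$ Cauchy problem), which requires justifying the integration by parts $\int\langle\Delta_f^d\hat{\w},\theta_i\rangle\,dv_f=\lambda_i a_i$ by a cut-off argument, as the paper does in the $K_M^{-q}$ case.
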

As a consequence, we can prove the dimension estimate for holomorphic $(p, 0)$-forms with polynomial growth by its growth order, see Corollary \ref{cor: estimate of the dimension of holomorphic forms} in the introduction.

\begin{proof}[Proof of Corollary \ref{cor: estimate of the dimension of holomorphic forms}.]
By Lemma \ref{lem: mapping holomorphis forms to solutions of heat equation} and \ref{lem: linear expansion for solutions of the f heat equation of forms}, we have
\[
\dim \mathcal{O}_{\mu}(\Lambda^{(p,0)}) \leq \mathcal{N}(\mu/2 + p (\sup_M |Ric| + \frac{1}{2}) ),
\]
where $\mathcal{N}$ is the spectral counting number of $\Delta^d_f$ acting on $L^2(dv_f)$-sections of $\Lambda^{p, 0}$. By Lemma \ref{lem: rewriting the f-Hodge Laplacian}, the operator $\Delta^d_f$ is in the form of (\ref{eqn: type of elliptic operator}). And by (\ref{eqn: definition of R_f}), the endomorphism term $\mathcal{R}_f$ is bounded since we have assumed bounded curvature. The corollary then follows from Theorem \ref{thm: estimate of spectral counting number}.
\end{proof}

\subsection{Holomorphic sections of $K_M^{-q}$.}
Let $K_M^{-1}$ be the anti-canonical line bundle, and let $U_\alpha \subset M^{2m}$ be trivializing open sets for $K_M^{-1}$.  A section of $K_M^{-1}$ can be written as a set of functions $u = (u_\alpha)$, where each $u_\alpha$ is a complex function on $U_\alpha$. Suppose there are holomorphic local coordinates on each $U_\alpha$, in which the K\"ahler metric can be written as $g_{i \jb}^\alpha$. Then the set of functions $ h : =( - \ln \det g_{i \jb}^\alpha )$ defines a Hermitian metric on $K_X^{-1}$. The norm of a section $u$ can be written on a trivializing open set $U_\alpha$ as 
$$
|u|_h^2 := |u_\alpha|^2e^{\ln \det g_{i \jb}^\alpha }.
$$

Naturally, for each positive integer $q$, $q h := (- q \ln \det g_{i \jb})$ defines a metric on the bundle $K_M^{-q}$. For simplicity, we will still denote the norm on $K_M^{-q}$ as $|\cdot|_h$. The curvature $\sqrt{-1}\Theta = - q \sqrt{-1}\pd \dbar \ln \det g = q Ric$ is $q$-times the Ricci form, and
$tr \Theta =   \frac{q}{2} S$,  
where $S = 2 g^{i \jb} R_{i \jb}$ is the Riemannian scalar curvature. 
 
 Let $g(t)$ be the Ricci flow defined in (\ref{eqn: canonical Ricci flow}), let $h(t)$ be the metric on $K_M^{-1}$ induced by $g(t)$. For any holomorphic section $u$ of $K_M^{-q}$, as discussed above (\ref{eqn: laplacian of a holomorphic section}), $u$ satisfies the equation
 \begin{equation}\label{eqn: Laplacian of a holomorphic section}
 \Delta_{g(t)} u = - tr_{g(t)} \Theta_{h(t)} u = -\frac{q}{2}S_{g(t)} u,
 \end{equation}
 for all $t\in (-\infty, 0)$. Note that $S_{g(t)} ( \Phi_t^{-1}(x)) = \frac{1}{\tau(t)} S_{\Phi_t^*g} ( \Phi_t^{-1}(x)) = \frac{1}{\tau(t)} S_g(x)$. Thus the curvature term in (\ref{eqn: heat type equation for a transformation of holomorphic section}) becomes independent of $s$, and $\hat{u}(x, s) = \left( \Phi_{-e^{-s}}^{-1} \right)^*u (x)$ satisfies
 \[
 \pd_s \hat{u}(x, s) = \Delta_g \hat{u}(x, s) +\frac{q}{2} S_g(x) \hat{u}(x, s) - \mathcal{L}_{\n f} \hat{u} (x, s).
 \]
 Define the operator $L = - \Delta_g - \frac{q}{2} S_g + \mathcal{L}_{\n f}$ acting on sections of $K_M^{-q}$. 
\begin{lem}\label{lem: Lie derivative of a section}
For any section $u$ of $K_M^{-q}$, we have 
\[\mathcal{L}_{\n f} u =\n_{\n f} u - \frac{q}{2} \Delta f u.\] 
\end{lem}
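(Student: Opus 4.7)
The plan is to verify the identity locally. Choose holomorphic coordinates $(z^1,\dots,z^m)$ on a trivializing chart and use the local holomorphic frame $e := (\pd_1\wedge\cdots\wedge \pd_m)^{\otimes q}$ of $K_M^{-q}$, writing $u = \phi\, e$ for a smooth complex function $\phi$. Since both $\mathcal{L}_{\n f}$ and $\n_{\n f}$ are first order, satisfy Leibniz, and act as $(\n f)\phi$ on the scalar factor, the identity for $u = \phi e$ reduces to the tensorial statement
\[
\mathcal{L}_{\n f}\, e - \n_{\n f}\, e = -\frac{q}{2}\Delta f\cdot e.
\]
A crucial preliminary, which I will use throughout, is that on a K\"ahler Ricci shrinker the vector field $X := (\n f)^{1,0} = g^{i\jb} f_{\jb}\,\pd_i$ is holomorphic: the soliton equation together with $R_{ij}=0$ (the purely holomorphic Ricci components vanish on K\"ahler manifolds) gives $\n_i \n_j f = 0$, equivalently $\pd_{\kb} X^i = 0$. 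Thus $\n f = X + \bar{X}$ generates a biholomorphic flow, which makes $\mathcal{L}_{\n f}$ meaningful on sections of the holomorphic line bundle $K_M^{-q}$ and preserves the type of $(m,0)$-multivectors.

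First I would compute $\mathcal{L}_{\n f} e_0$ with $e_0 := \pd_1\wedge\cdots\wedge \pd_m$ via the multivector formula
\[
\mathcal{L}_Y(V_1\wedge\cdots\wedge V_m) = \sum_{k=1}^m V_1\wedge\cdots\wedge [Y, V_k]\wedge\cdots\wedge V_m.
\]
Writing $\n f = f^i \pd_i + f^{\ib}\pd_{\ib}$ one has $[\n f, \pd_k] = -(\pd_k f^i)\pd_i - (\pd_k f^{\ib})\pd_{\ib}$. The $\pd_{\ib}$-contributions vanish after wedging into position $k$ because $\pd_k f^{\ib} = \pd_k\overline{X^i}=0$ (the conjugate of a holomorphic function is anti-holomorphic); in the $\pd_i$-contribution only the $i=k$ summand survives the wedge. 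Summing yields $\mathcal{L}_{\n f}\,e_0 = -\bigl(\sum_k \pd_k f^k\bigr)\, e_0$, and Leibniz for the $q$-fold tensor power gives $\mathcal{L}_{\n f}\, e = -q\bigl(\sum_k \pd_k f^k\bigr)\, e$.

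Next I would compute $\n_{\n f}\,e$. The induced Hermitian metric on $K_M^{-q}$ has local weight $(\det g_{i\jb})^q$, so the Chern connection form in the frame $e$ is $q\,\pd\log\det g_{i\jb}$; hence $\n_{\pd_i} e = q(\pd_i\log\det g)\, e$, $\n_{\pd_{\ib}} e = 0$, and
\[
\n_{\n f}\, e = q\,f^i(\pd_i\log\det g)\, e.
\]
Subtracting produces $\mathcal{L}_{\n f}\, e - \n_{\n f}\, e = -q\bigl(\sum_k \pd_k f^k + f^k \pd_k \log\det g\bigr)\, e$. I finish with the standard K\"ahler identity $\sum_k\Gamma^k_{kl} = \pd_l \log\det g$, which turns $\sum_k \n_k f^k = \sum_k \pd_k f^k + f^l \pd_l \log\det g$; and $\sum_k \n_k f^k = g^{k\jb}\n_k f_{\jb} = g^{k\jb} f_{k\jb} = \tfrac12\Delta f$ because $\Delta f = 2g^{i\jb}f_{i\jb}$ on K\"ahler manifolds. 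The bracket above therefore equals $\tfrac12\Delta f$, giving exactly the required $-\tfrac{q}{2}\Delta f\cdot e$.

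The only real subtlety is the vanishing of the $(m-1,1)$-piece of $\mathcal{L}_{\n f} e_0$; this is precisely where the K\"ahler Ricci shrinker hypothesis enters, through holomorphicity of $(\n f)^{1,0}$. Without it $\mathcal{L}_{\n f}$ would not preserve $K_M^{-q}$ and the statement of the lemma would itself require reinterpretation. Everything else is routine K\"ahler geometry and the Leibniz rule.
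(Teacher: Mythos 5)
Your proof is correct, and it takes a somewhat different computational route from the paper's. The paper first works on the canonical bundle $K_M$: it applies Cartan's formula $\mathcal{L}_{\n f}=d\, i_{\n f}+i_{\n f}\,d$ to the closed form $dz^1\wedge\cdots\wedge dz^m$, obtains $\mathcal{L}_{\n f}v=\n_{\n f}v+\tfrac12\Delta f\,v$ for sections $v$ of $K_M$, and then passes to $K_M^{-q}$ by duality (flipping the sign and multiplying by $q$). You instead work on $K_M^{-q}$ directly, using the bracket/Leibniz formula for the Lie derivative of the polyvector frame $(\pd_1\wedge\cdots\wedge\pd_m)^{\otimes q}$ together with the explicit Chern connection form $q\,\pd\log\det g$. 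Both arguments hinge on the same key input — the $(2,0)$-part of the soliton equation forces $(\n f)^{1,0}$ to be holomorphic, which kills the type-breaking terms — so they are close in spirit. What your version buys is explicit bookkeeping of the two pieces $\sum_k\pd_kf^k$ and $f^k\pd_k\log\det g$ and their recombination into $\n_kf^k=\tfrac12\Delta f$ via $\Gamma^k_{kl}=\pd_l\log\det g$; the paper's displayed computation silently identifies $\sum_i\pd_if^i$ with $g^{i\jb}f_{\jb i}$ and simultaneously drops the connection term of $\n_{\n f}v$, two omissions that happen to cancel. Your treatment makes that cancellation visible, at the cost of having to invoke the multivector Lie-derivative formula and the tensor-power Leibniz rule rather than the single application of Cartan's formula.
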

\begin{proof}
Let's first compute $\mathcal{L}_{\n f}$ on a section $v$ of the canonical line bundle $K_M$. Let $v = v_\alpha dz^1 \wedge dz^2 \wedge ... \wedge dz^n$ on a trivializing open set $U_\alpha$, then 
\[
\mathcal{L}_{\n f} v = \n_{\n f} v_\alpha dz^1 \wedge dz^2 \wedge ... \wedge dz^n + v_\alpha \mathcal{L}_{\n f} ( dz^1 \wedge dz^2 \wedge ... \wedge dz^n ).
\]
Write $\n f = g^{i\jb} f_i \pd_{\jb} + g^{i\jb} f_{\jb} \pd_i$. By Cartan's formula, 
\[
\begin{split}
\mathcal{L}_{\n f} ( dz^1 \wedge dz^2 \wedge ... \wedge dz^n ) = & d i_{\n f} ( dz^1 \wedge dz^2 \wedge ... \wedge dz^n ) \\
= & d\left( g^{i \jb} f_{\jb} \sum_{i } (-1)^{i-1}dz^1 \wedge dz^2 \wedge ...\wedge \check{dz^i} \wedge ... \wedge dz^n \right)\\
= &g^{i \jb} f_{\jb i} dz^1 \wedge dz^2 \wedge ... \wedge dz^n \\
= &\frac{1}{2}\Delta f dz^1 \wedge dz^2 \wedge ... \wedge dz^n \\
\end{split}
\]
Hence
\[
\mathcal{L}_{\n f} v = \n_{\n f} v + \frac{1}{2}\Delta f v .
\]
Since $K_M^{-1}$ is the dual bundle of $K_M$, we get 
\[
\mathcal{L}_{\n f} u = \n_{\n f} u - \frac{q}{2} \Delta f u ,
\]
for any section $u$ of $K_M^{-q}$.
\end{proof}
Recall that the soliton identity implies $S + \Delta f  = \frac{n}{2}$, where $n$ is the real dimension, hence  by Lemma \ref{lem: Lie derivative of a section} we have 
\[
L u = - \Delta u + \n_{\n f} u - \frac{nq}{4} u,
\]
for sections $u$ of $K_M^{-q}$. Thus $L$ is in the form of (\ref{eqn: type of elliptic operator}), in particular, the endomorphism term is only a multiplication by a constant. Then by the general discussion (\ref{eqn: heat type equation for a transformation of holomorphic section}), the section $\hat{u}$ defined in (\ref{eqn: definition of u hat}) is a solution of the heat type equation
 \begin{equation}\label{eqn: heat type equation for L}
 \pd_s \hat{u} + L \hat{u} = 0, \quad s\in (-\infty, \infty).
 \end{equation}
Obviously the map $u \mapsto \hat{u} $ is linear and injective. 

For each $d> 0$, let $\mathcal{O}_d(K_M^{-q})$ be the space of holomorphic sections of $K_M^{-q}$ with polynomial growth, i.e. for any $u \in \mathcal{O}_d(K_M^{-q})$, there exists a constant $C$ such that
\[
|u|_h (x)\leq C(1+r(x))^d,
\]
for all $x\in M$.

Now let $u \in \mathcal{O}_d(K_M^{-q})$, we want to derive growth estimate for $\hat{u}$. For simplicity, let's denote $\psi_s = \Phi_{-e^{-s}}^{-1}$. Note that
\[
\frac{d}{ds} \left( r(\psi_s(x) ) + c(n) \right) \leq |\n f|(\psi_s(x)) \leq  \frac{1}{2} \left( r(\psi_s(x) ) + c(n) \right),
\]
after integration we get
\[
 r(\psi_s(x) ) + c(n) \leq \left( r(x) + c(n) \right) e^{s/2},  \quad \text{when } s \geq  0;
\]
similarly we have 
\[
 r(\psi_s(x) ) + c(n) \leq \left( r(x) + c(n) \right) e^{- s/2},  \quad \text{when } s <  0.
\]
Similarly we can control the change of volume elements along an integral curve of $-\frac{1}{\tau} \n f$. 
\[
\frac{d}{ds} \det \psi_s^* g  = tr_g \left( \mathcal{L}_{- \n f} g \right) |_{\psi_s} \det \psi_s^* g  = -\Delta f|_{\psi_s} \det \psi_s^* g.
\]
Since $(M, g)$ has nonnegative scalar curvature $S \geq 0$, we have $\frac{n}{2} - \sup_M S \leq \Delta f = \frac{n}{2} -S \leq  \frac{n}{2}$, hence
\[
\frac{\det \psi_s^* g}{\det g} \geq e^{-ns/2} \quad \text{when } s \geq 0; 
\] 
\[
\frac{\det \psi_s^* g}{\det g} \geq e^{(\sup_M S - n/2)s} \quad \text{when } s <  0; 
\] 
Note that
\[
|\hat{u}|_h(x,s)^2 = | u |_h (\psi_s(x))^2 \left(\frac{\det g(x)}{\det \psi_s^* g(x)} \right)^q.
\]
By the above calculation, $\hat{u}(x, s)$ satisfies
\begin{equation}\label{eqn: growth of u hat}
|\hat{u}|_h(x,s) \leq Ce^{qns/4}(1+r(\psi_s(x)))^d \leq C (1+ r(x))^d e^{(qn/4 + d/2)s} \quad \text{when } s \geq  0;
\end{equation}
and 
\begin{equation}\label{eqn: growth of u hat when s < 0}
|\hat{u}|_h(x,s) \leq  C (1+ r(x))^d e^{(q(n/2 -  \sup_M S) - d/2)s} \quad \text{when } s <  0;
\end{equation}
where we allow the value of the constant $C$ to change when necessary. 

\begin{prop}\label{prop: control the dimension by counting number} On a complete gradient K\"ahler Ricci shrinker with bounded scalar curvature, for any integer $q> 0$, we have 
\[
\dim \mathcal{O}_d(K_M^{-q}) \leq \mathcal{N}\left(\frac{d}{2} + q( \sup_M S - n/2) \right),
\]
where $\mathcal{N}(\cdot)$ is the spectral counting number of the operator $L = - \Delta  + \n_{\n f} - \frac{nq}{4}$ acting on $L^2(dv_f)$ sections of $K_M^{-q}$.
\end{prop}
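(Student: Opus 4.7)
The plan is to spectrally decompose the image $\hat u$ of $u \in \mathcal O_d(K_M^{-q})$ under the linear injection $u \mapsto \hat u$ of (\ref{eqn: definition of u hat}), and read off from the growth rate in $s$ that only eigensections of $L$ with eigenvalue at most $\beta := \tfrac{d}{2} + q(\sup_M S - n/2)$ can contribute. Two facts are already in hand: by (\ref{eqn: heat type equation for L}), $\hat u$ satisfies $\pd_s \hat u + L \hat u = 0$ on $M \times \R$ and reduces to $u$ at $s = 0$; and by (\ref{eqn: growth of u hat when s < 0}) one has the pointwise bound $|\hat u|_h(x,s) \leq C(1+r(x))^d e^{-\beta s}$ for $s \leq 0$. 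Since the endomorphism part of $L$ is the bounded scalar $-\tfrac{nq}{4}$, the arguments of \cite{HN2014, CZ2016} invoked in Section \ref{section: spectral counting number} give that $L$ has discrete spectrum $\lambda_1 \leq \lambda_2 \leq \cdots$ and a complete $L^2(dv_f)$-orthonormal basis of eigensections $\{\theta_i\}$.

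The key steps would then be as follows. Expand $\hat u(\cdot, s) = \sum_i a_i(s) \theta_i$; pairing the heat equation with $\theta_i$ in $L^2(dv_f)$ and using the self-adjointness of $L$ forces $a_i'(s) = -\lambda_i a_i(s)$, so $a_i(s) = a_i(0) e^{-\lambda_i s}$. Combining the pointwise growth bound with Cao--Zhou's quadratic lower bound $f(x) \geq \tfrac{1}{4}(r(x) - c_1)_+^2$ yields
\[
\|\hat u(\cdot, s)\|_{L^2(dv_f)}^2 \leq C_0 \, e^{-2\beta s}, \qquad s \leq 0,
\]
with $C_0$ finite and independent of $s$. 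By Parseval,
\[
|a_i(0)|^2 e^{-2\lambda_i s} \leq \sum_j |a_j(0)|^2 e^{-2\lambda_j s} = \|\hat u(\cdot, s)\|_{L^2(dv_f)}^2 \leq C_0 \, e^{-2\beta s},
\]
so $|a_i(0)|^2 \leq C_0 \, e^{2(\lambda_i - \beta) s}$ for all $s \leq 0$; letting $s \to -\infty$ forces $a_i(0) = 0$ whenever $\lambda_i > \beta$. Thus $u = \hat u(\cdot, 0) = \sum_{\lambda_i \leq \beta} a_i(0) \theta_i$ lies in the $\mathcal N(\beta)$-dimensional span of eigensections with eigenvalue at most $\beta$, and the injectivity of $u \mapsto \hat u(\cdot, 0)$ delivers the claimed bound $\dim \mathcal O_d(K_M^{-q}) \leq \mathcal N(\beta)$.

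The main technical obstacle is the rigorous justification of the coefficient ODE in the first step, which requires that $s \mapsto \hat u(\cdot, s)$ be differentiable into $L^2(dv_f)$ and that $L \hat u(\cdot, s)$ also lie in $L^2(dv_f)$. Smoothness in $s$ is immediate from the smooth dependence of $\Phi_t$ on $t$, while the second point needs polynomial-in-$r(x)$ pointwise bounds on $\n \hat u$ and $\n^2 \hat u$; these follow from the bounded curvature hypothesis together with standard interior elliptic or parabolic regularity on unit balls, in the spirit of the mean-value argument of Lemma \ref{lem: mean value inequality}. Once these derivative estimates are available, differentiation under the $L^2(dv_f)$ pairing with $\theta_i$ is routine and the remainder of the argument is essentially the bookkeeping above.
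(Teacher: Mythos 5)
Your proposal is correct and follows essentially the same route as the paper: expand $\hat{u}$ in eigensections of $L$, bound $\|\hat{u}(\cdot,s)\|_{L^2(dv_f)}^2$ by $C e^{-2\beta s}$ for $s<0$ using (\ref{eqn: growth of u hat when s < 0}) and the quadratic growth of $f$, and let $s\to-\infty$ to kill every coefficient with $\lambda_i>\beta$, concluding by injectivity of $u\mapsto\hat{u}$. The one point of divergence is how the identity $\hat{u}(\cdot,s)=\sum_i a_i e^{-\lambda_i s}\theta_i$ is justified: the paper builds the candidate series $\tilde{u}$ and invokes uniqueness of the $L^2(dv_f)$ Cauchy problem via an energy monotonicity for $e^{nqs/4}\hat{u}$, whereas you derive the coefficient ODE by pairing with $\theta_i$ directly --- a technically equivalent step whose justification (that $\hat{u}(\cdot,s)$ lies in the domain of $L$, via polynomial derivative bounds) is of the same order of difficulty as the cut-off integration by parts the paper leaves to the reader, so the gap you flag is no worse than what the paper itself elides.
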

\begin{proof}
The operator $L $ is self-adjoint w.r.t the measure $dv_f$, and has discrete spectrum as discussed in the previous section. Let $\lambda_1 \leq \lambda_2 \leq ... $ be the eigenvalues of $L$, and let $\phi_1, \phi_2, ...$ be the corresponding eigensections, then $\phi_1, \phi_2, ...$ is a complete orthonormal basis for the space of $L^2(dv_f)$ sections of $K_M^{-q}$. Hence there exist coefficients $a_1, a_2,...,$ such that 
\[
\hat{u}(x, 0) = \sum_{i = 1}^\infty a_i \phi_i(x).
\]
Then the section $\tilde{u}(x, s) : = \sum_{i = 1}^\infty a_i e^{-\lambda_i s} \phi_i(x)$ is an $L^2(dv_f)$ solution of the heat type equation (\ref{eqn: heat type equation for L}) with initial data $\hat{u}(x, 0)$.

Since $\hat{u}$ satisfies (\ref{eqn: growth of u hat}), and that $f$ has quadratic growth, we know that $\hat{u}$ is also an $L^2(dv_f)$ solution of (\ref{eqn: heat type equation for L}), with the same initial data at $s = 0$.

The Cauchy problem for (\ref{eqn: heat type equation for L}) in $L^2(dv_f)$ is uniquely determined by the initial data. To see this, note that the section $\tilde{u} : = e^{\frac{nq}{4} s} u$ satisfies the heat-type equation 
\[
\frac{\pd}{\pd s} \tilde{u} = (\Delta - \n_{\n f}) \tilde{u},
\]
where $\Delta - \n_{\n f}$ is self-adjoint w.r.t the weighted $L^2(dv_f)$-norm. Hence
\[
\frac{d}{ds} \int |\tilde{u}|_h^2 \hspace{5pt} dv_f = - 2 \int |\n \tilde{u}|_h^2 \hspace{5pt} dv_f \leq 0,
\]
where the integration by parts can be verified by cut-off argument. The uniqueness then follows from standard argument.

Consequently, we have $\hat{u}(x, s) = \tilde{u}(x, s)$ for $s \in (-\infty, \infty)$. Now observe that (\ref{eqn: growth of u hat when s < 0}) implies that the $L^2(dv_f)$ norm of $\hat{u}$ satisfies
\[
\|\hat{u}(\cdot, s)\|^2_{L^2(dv_f)} \leq C e^{( q(n - 2 \sup_M S) - d)s} \quad \text{when } s <  0;
\]
while $\tilde{u}$ satisfies
\[
\|\tilde{u}(\cdot, s)\|^2_{L^2(dv_f)} = \sum_{i = 1}^\infty |a_i|^2 e^{-2\lambda_i s}.
\]
Thus the coefficients $a_i$ has to vanish when $2 \lambda_i > d +  q( 2 \sup_M S - n)$, i.e. $\hat{u}(\cdot, 0)$ is actually a finite linear combination of eigensections with eigenvalues less than or equal to $\frac{d}{2} + q( \sup_M S - n/2)$. The claim then follows from the injectivity of the map $u \mapsto \hat{u}$. 

\end{proof}

Now we can prove Corollary \ref{cor: dimension estimate for power of anticanonical bundle} in the introduction.
\begin{proof}[Proof of Corollary \ref{cor: dimension estimate for power of anticanonical bundle}.]
The operator $L = - \Delta  + \n_{\n f} - \frac{nq}{4}$ clearly satisfies the conditions in Theorem \ref{thm: estimate of spectral counting number}, hence the corollary follows from Proposition \ref{prop: control the dimension by counting number} and Theorem \ref{thm: estimate of spectral counting number}.
\end{proof}

\section{Frequency on asymptotically conical gradient K\"ahler Ricci shrinker}\label{section: frequency on AC shrinkers}
In this section we derive a weak monotonicity theorem for the frequency of holomorphic sections of $K_M^{-q}$ on gradient K\"ahler Ricci shrinkers which are asymptotically conical.

\subsection{Definitions and basic properties}
Let $u$ be a holomorphic section of $K_M^{-q}$. In the following calculations, we denote the Hermitian inner product on $K_M^{-q}$ as $h(,)$.  The Hermitian connection is denoted for simplicity as $\n$, it is compatible with the metric $ h$ so that 
\[
X h(u, v) = h(\n_X u, v) + h(u, \n_{\bar{X}} v)
\]
 for any sections $u$ and $v$. Define the pointwise norm of a section as 
 \[
 |u|_h = \sqrt{h(u,u)},
 \]
  and define the norm for its covariant differential as 
  \[
  |\n u|_h = \sqrt{g^{i \jb} h(\n_i u, \n_j u)}.
  \] 
Since $u$ is holomorphic, we have $\n_{\ib} u = 0$ and $\n_{\jb} \n_i u = - q R_{i \jb} u$, and 
\begin{equation}\label{eqn: laplacian of norm square of u}
\Delta |u|_h^2 = 2 g^{i\jb} \n_i \n_{\jb} h(u, u) = 2g^{i \jb}h(\n_i u, \n_j u) +  2 g^{i \jb} h(u, \n_{\ib} \n_j u) = 2 |\n u|_h^2 -  q S |u|_h^2,
\end{equation}
where $S = 2 g^{i \jb} R_{i \jb}$.
 
Let $n = 2m$ be the real dimension of the K\"ahler manifold. Define
\[
I(r) = r^{1-n} \int_{b=r} |u|_h^2 |\n b|,
\]
\[
D(r) = \frac{1}{2} r^{2-n} \int_{b=r} \la \n |u|_h^2, \vn \ra, 
\]
where $\vn = \frac{\n b}{|\n b|}$ is the unit outer normal vector. This is well-defined when $|\n b| > 0$, which must hold when $b > 2 \sqrt{\sup_M S}$ since we have the identity $1 = |\n b|^2 + \frac{4S}{b^2}$, and we assume that $\sup_M S < \infty$.

By the divergence theorem and the soliton identities, we can write $I(r)$ as
\[
I(r) = \int_{b< r} \langle \n |u|_h^2 , \n b \rangle  b^{1-n} + |u|_h^2 b^{-n} \left( \frac{4n}{b^2} - 2 \right) S.
\]
The derivative of $I(r)$ can be calculated as the following,
\begin{equation}\label{eqn: derivative of I}
I'(r) =\frac{2D(r)}{r} + r^{-n} \left( \frac{4n}{r^2} - 2\right) \int_{b=r} \frac{S |u|_h^2}{|\n b|}. 
\end{equation}

By the divergence theorem we can write
\begin{equation}\label{eqn: writting D by divergence theorem}
D(r) = r^{2-n} \int_{b< r} |\n u|_h^2 - \frac{q}{2} S |u|_h^2.
\end{equation}
Whenever $I(r)> 0$, we define the frequency function for $u$ as
\[
U(r) = \frac{D(r)}{I(r)}.
\]

The following lemma is useful in calculating $D'(r)$.
\begin{lem}\label{lem: first variation formula kahler case}
Let $u$ be a holomorphic section of $K_M^{-q}$ on a compact domain $\Omega$ of a K\"ahler manifold, let $\vn$ be the unit outer normal vector of the boundary $\pd \Omega$. For any real vector field $X$, we have
\[
\begin{split}
 \int_{\Omega} |\n u|_h^2 div(X)
= & \int_{\Omega}   2 Re(\la h( \n u, \n u), \n  X \ra) - \frac{q}{2}S \la \n |u|_h^2,  X \ra + q \la Ric(X),  \n |u|_h^2 \ra \\
& - \int_{\pd \Omega} \la h(\n u, \n_{\vn} u),  X \ra  + \la h( \n_{\vn} u, \n u),  X \ra  + \int_{\pd \Omega} |\n u|^2 \langle X, \vn \rangle. \\ 
\end{split}
\] 
\end{lem}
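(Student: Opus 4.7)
The plan is to establish this Pohozaev-type identity by combining the product-rule identity
\[
|\n u|_h^2 \, div(X) = div\bigl( |\n u|_h^2 X \bigr) - \la \n |\n u|_h^2, X \ra
\]
with the divergence theorem, and then extracting the stated decomposition of the remaining interior term $\int_\Omega \la \n |\n u|_h^2, X \ra$ by using the holomorphic structure of $u$. Integrating the identity above over $\Omega$ immediately produces the boundary piece $\int_{\pd \Omega} |\n u|_h^2 \la X, \vn \ra$.

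To handle the interior term, I would work in a local holomorphic frame so that $|\n u|_h^2 = g^{i \bar j} h(\n_i u, \n_j u)$ by holomorphicity. Splitting $X = X^{1,0} + \overline{X^{1,0}}$ and using the compatibility of the Hermitian connection with $h$ expands $X(|\n u|_h^2)$ into two ``same-type'' terms $g^{i \bar j} X^k h(\n_k \n_i u, \n_j u)$ and its conjugate, plus two ``cross'' terms $g^{i \bar j} \overline{X^k} h(\n_{\bar k} \n_i u, \n_j u)$ and its conjugate. The cross terms are rewritten via the curvature of the Hermitian connection on $K_M^{-q}$ using $\n_{\bar k} \n_i u = -q R_{i \bar k} u$ and $\n_{\bar k} u = 0$, quoted just above (\ref{eqn: laplacian of norm square of u}); after pairing with the conjugate contribution these assemble into $q \la Ric(X), \n |u|_h^2 \ra$.

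The same-type terms are then integrated by parts once more, moving $\n_k$ off of $\n_i u$. This generates the two boundary terms $-\int_{\pd \Omega}(\la h(\n u, \n_{\vn} u), X \ra + \la h(\n_{\vn} u, \n u), X \ra)$ from the divergence theorem, an interior piece in which $\n_k$ lands on $X^k$ producing $2 Re(\la h(\n u, \n u), \n X \ra)$, and an interior piece in which $\n_k$ lands on $\n_j u$; the latter is converted by one more commutator swap together with the holomorphic Laplacian formula $g^{i \bar j}\n_{\bar j} \n_i u = -\frac{q}{2} S u$ from (\ref{eqn: laplacian of norm square of u}), yielding both the remaining part of the $Ric$ contribution and the scalar curvature term $-\frac{q}{2} S \la \n |u|_h^2, X \ra$.

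The main obstacle is careful bookkeeping rather than any deep new input: one must correctly track conjugate linearity in the second slot of $h$, keep the two sign-sensitive boundary terms from cancelling, and verify that the $q$ and $q/2$ coefficients match after each commutator is applied. Computing at a point in a holomorphic normal frame, where $g_{i \bar j} = \delta_{ij}$ and the Christoffel symbols of $g$ vanish to first order, would eliminate the nuisance first-derivative terms and isolate the essential curvature contributions.
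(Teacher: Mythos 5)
Your proposal follows essentially the same route as the paper's proof: apply the product rule and the divergence theorem to produce the boundary term $\int_{\pd \Omega} |\n u|_h^2 \langle X, \vn \rangle$, expand the interior derivative of $|\n u|_h^2$ via metric compatibility, convert the mixed second derivatives using $\n_{\bar{k}} \n_i u = -q R_{i \bar{k}} u$ and $\n_{\bar{i}} u = 0$, and integrate the same-type terms by parts once more to generate the $\n X$ term and the two remaining boundary terms. The only (harmless) imprecision is in your attribution of which stage produces which curvature term — in the paper the cross terms from the first differentiation yield the full $q \la Ric(X), \n |u|_h^2 \ra$ contribution, while the cross terms created by the second integration by parts trace out to give $-\frac{q}{2} S \la \n |u|_h^2, X \ra$, so there is no ``remaining part'' of the Ricci contribution at the last step.
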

\begin{proof}
We prove this equation using integration by parts. Denote the unit normal vector as 
\[
\vn  = \nu_k g^{k \lb} \frac{\pd}{\pd \bar{z}^l}  + \nu_{\kb} g^{l \kb} \frac{\pd}{\pd {z}^l},
\]
{and $X$ is a real vector field.}
\[
\begin{split}
\int_{\Omega} |\n u|_h^2 div(X) 
= & \int_{\Omega} - g^{i \jb} \pd_i |\n u|_h^2 X_{\jb} - g^{i\jb} \pd_{\jb} |\n u|_h^2 X_i + \int_{\pd \Omega} |\n u|_h^2 \langle X, \vn \rangle \\
= &  \int_{\Omega} -  g^{i \jb} g^{k \lb} [h( \n_i \n_k u, \n_l u) + h(  \n_k u, \n_{\ib} \n_l u)] X_{\jb} \\
& -\int_{\Omega} g^{i \jb} g^{k \lb} [h( \n_{\jb} \n_k u, \n_l u) + h(  \n_k u, \n_{j} \n_l u)] X_i + \int_{\pd \Omega} |\n u|_h^2 \langle X, \vn \rangle, \\
\end{split}
\]
note that $\n_i \n_j u = \n_j \n_i u$ by $R_{ij} = 0$, after integration by parts {for $\int_{\Omega}g^{i \jb} g^{k \lb} h( \n_i \n_k u, \n_l u)X_{\jb}$ and $\int_{\Omega}g^{i \jb} g^{k \lb} h( \n_k u, \n_j\n_l u)X_{i}$}, we have
\[
\begin{split}
& \int_{\Omega} |\n u|_h^2 div(X)\\
= & \int_{\Omega}   g^{i \jb} g^{k \lb} h( \n_i u, \n_l u) \n _k X_{\jb} + g^{i \jb} g^{k \lb} [ h( \n_i u, \n_{\kb} \n_l u)  -h(  \n_k u, \n_{\ib} \n_l u)] X_{\jb} \\
+  & \int_{\Omega}   g^{i \jb} g^{k \lb} h( \n_k u, \n_j u) \n _{\lb} X_{i} + g^{i \jb} g^{k \lb} [ h( \n_{\lb} \n_k u,  \n_j u)  -h(  \n_{\jb} \n_k u, \n_l u) ] X_i\\
& - \int_{\pd \Omega} g^{i \jb} g^{k \lb} h(\n_i u, \n_l u)  X_{\jb} \nu_k + g^{i \jb} g^{k \lb} h( \n_k u, \n_j u)  X_i \nu_{\lb} + \int_{\pd \Omega} |\n u|_h^2 \langle X, \vn \rangle. \\ 
\end{split}
\]
Using $\n_{\jb} \n_i u = - q R_{i\jb} u$, and $\n_{\ib} u = 0$, we have 
\[
\begin{split}
& \int_{\Omega} |\n u|_h^2 div(X)\\
= & \int_{\Omega}   g^{i \jb} g^{k \lb} h( \n_i u, \n_l u) \n _k X_{\jb} - g^{i \jb} g^{k \lb} [ q R_{k \lb} h( \n_i u, u)  - q R_{i \lb} h(  \n_k u, u)] X_{\jb} \\
+  & \int_{\Omega}   g^{i \jb} g^{k \lb} h( \n_k u, \n_j u) \n _{\lb} X_{i} - g^{i \jb} g^{k \lb} [ q R_{k \lb} h( u,  \n_j u)  - q R_{k \jb } h( u, \n_l u) ] X_i\\
& - \int_{\pd \Omega} g^{i \jb} g^{k \lb} h(\n_i u, \n_l u)  X_{\jb} \nu_k + g^{i \jb} g^{k \lb} h( \n_k u, \n_j u)  X_i \nu_{\lb} + \int_{\pd \Omega} |\n u|_h^2 \langle X, \vn \rangle, \\ 
\end{split}
\] 
note that {$S = 2 g^{k \lb} R_{k \lb}$},  this finishes the proof.
\end{proof}
\subsection{Asymptotically conical shrinkers}
In \cite{KW2015}, a gradient Ricci shrinker $(M, g, f)$ is called \textbf{asymptotically conical} if there exists a regular cone $(C:= \Sigma \times (0, \infty), g^c = d\rho^2 + \rho^2 g^\Sigma)$, and a diffeomorphism $\Psi: \Sigma \times (R, \infty) \to M \backslash\Omega$, where $R> 0$ is some constant and $\Omega$ is a compact domain in $M$, such that $\lambda^{-2}\eta_\lambda^* \Psi^* g \to g^c$ in $C^\infty_{loc}$ as $\lambda \to \infty$, where $\eta_\lambda$ is the homothety $\eta_\lambda(x, r) = (x, \lambda r)$ on the cone. The only asymptotically conical gradient Ricci shrinker in real dimension $2$ is the Gaussian soliton. Examples in higher dimensions are given by \cite{FIK2003}. We will assume that the real dimension of $M$ is $n \geq 4$ in this section.

By \cite{KW2015}, a gradient K\"ahler Ricci shrinker is asymptotically conical if and only if it has quadratic curvature decay, i.e. $|Rm|(x) \leq C (1+r(x))^{-2}$ for some constant $C$ and $r(x)$ is the distance to a fixed point; and its asymptotic cone is unique. It's clear that the asymptotic cone $(C, g^c)$ can also be obtained by taking pointed Cheeger-Gromov limit of $(M, p, \lambda^{-2} g)$ for any fixed point $p$ and a sequence of $\lambda \to \infty$.

In the K\"ahler case, the section $\Sigma$ of the asymptotic cone is connected since a gradient K\"ahler Ricci shrinker is connected at infinity by \cite{MW2015b}. In this case the complex structure of the blow-down sequence converges to the complex structure $J^c$ on the cone, and the cone $(C, g^c)$ is K\"ahler, with K\"ahler form given by $\omega^c = \sqrt{-1}\pd \dbar \rho^2$. Define $f^c = \frac{\rho^2}{4}$, then we have 
\[
\n \n f^c = \frac{1}{2} g^c 
\]
on the cone $(C, g^c)$. For any holomorphic section $u$ of $K_{C}^{-q}$, define 
\[
I^c(r) = r^{1-n} \int_{\rho = r} |u|_h^2, \quad D^c(r) = \frac{1}{2}r^{2-n} \int_{\rho = r} \n_{\n \rho}|u|_h^2 , \quad U^c(r) = \frac{D^c(r)}{I^c(r)}.
\]

Note that $|\n \rho | = 1$, $\n\n \rho = \rho g^\Sigma$ and $\Delta \rho = \frac{n-1}{\rho}$. 
The calculation of $I^c(r)'$ becomes simple in this case, and we have 
\[
I^c(r)' = \frac{2D^c(r)}{r}.
\]

{
Let $u$ be a nontrivial holomorphic section, since $u$ is analytic, it $I^c(r)$ cannot vanish on any interval, so there must be a sequence of $r_i \to 0$ such that $I^c(r_i) > 0$, then $I^c(r)$ must be positive for all $r> 0$ since it is nondecreasing. 
}

\begin{lem}\label{lem: monotonicity of D^c}Suppose $|u|_h$ is bounded near the vertex, then we have
\[
D^c(r)' =  r^{2-n} \int_{\rho = r} 2 |\n_{\vn} u|_h^2,
\]
where $\vn = \n \rho$ is the unit outer normal vector. 
\end{lem}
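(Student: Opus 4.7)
The plan is to first reduce $D^c(r)$ to a volume integral by invoking the Ricci-flatness of the asymptotic cone, and then to compute $D^c(r)'$ by combining the coarea formula with the Pohozaev-type first variation identity (Lemma \ref{lem: first variation formula kahler case}) applied to the Euler vector field $X=\rho\,\partial_\rho=2\nabla f^c$. Since $(C,g^c)$ is a K\"ahler cone with $\nabla\nabla f^c=\tfrac12 g^c$, the soliton equation forces $Ric^c=0$ and hence $S^c=0$; then equation \eqref{eqn: writting D by divergence theorem} gives
\[
D^c(r)=r^{2-n}\int_{\rho<r}|\nabla u|_h^{2}.
\]
Differentiating this via the coarea formula (using $|\nabla\rho|=1$) yields
\[
D^c(r)'=(2-n)r^{1-n}\int_{\rho<r}|\nabla u|_h^{2}+r^{2-n}\int_{\rho=r}|\nabla u|_h^{2},
\]
so the remaining task is to re-express the volume integral as a boundary integral.

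For this I would apply Lemma \ref{lem: first variation formula kahler case} on $\Omega=\{\rho<r\}$ with $X=\rho\,\partial_\rho$. On the cone one checks $\operatorname{div}(X)=n$, $\nabla X=g^c$, and at the boundary $X=r\vec{n}$ with $\vec{n}=\partial_\rho$. Since $Ric^c=0$ and $S^c=0$, the interior curvature and drift terms vanish, and the Hessian term contributes $2|\nabla u|_h^{2}$. For the boundary term, holomorphicity gives $\nabla_{\bar\imath}u=0$, so $\nabla_{\vec{n}}u=\vec{n}^{i}\nabla_{i}u$, which lets one recognize the mixed Hessian contractions on $\partial\Omega$ as $2r|\nabla_{\vec{n}}u|_h^{2}$. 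The resulting identity is
\[
(n-2)\int_{\rho<r}|\nabla u|_h^{2}=r\int_{\rho=r}|\nabla u|_h^{2}-2r\int_{\rho=r}|\nabla_{\vec{n}}u|_h^{2}.
\]
Substituting this into the coarea expression for $D^c(r)'$ causes the $\int_{\rho=r}|\nabla u|_h^{2}$ contributions to cancel, leaving exactly
\[
D^c(r)'=2r^{2-n}\int_{\rho=r}|\nabla_{\vec{n}}u|_h^{2},
\]
as claimed.

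The main technical point will be dealing with the vertex $\{\rho=0\}$, where $(C,g^c)$ is only a metric cone and need not be smooth. The strategy is to work on the annulus $\{\varepsilon<\rho<r\}$, apply the divergence theorem and Lemma \ref{lem: first variation formula kahler case} there, and then let $\varepsilon\to 0$. The assumption that $|u|_h$ is bounded near the vertex (together with the link $\Sigma$ being compact, so the cone has finite volume near the vertex with weight $\rho^{n-1}d\rho$) ensures that both $\int_{\rho=\varepsilon}|u|_h^{2}$ and the associated gradient flux terms are controlled by $O(\varepsilon^{n-2})$ and thus vanish in the limit; this is where the hypothesis on $|u|_h$ is essential. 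Once the vertex is handled, the computation above goes through verbatim.
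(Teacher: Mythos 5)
Your overall strategy is the same as the paper's: write $D^c(r)$ as a bulk integral via the divergence theorem (cutting out a small ball around the vertex and using the boundedness of $|u|_h$ plus the gradient estimate to kill the $O(\varepsilon^{n-2})$ flux term), differentiate, and then apply Lemma \ref{lem: first variation formula kahler case} with the radial field $X=2\nabla f^c=\rho\,\partial_\rho$ to convert the bulk term into boundary terms. The vertex discussion is also exactly what the paper does.

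However, there is a genuine gap at the very first step: you assert that ``the soliton equation forces $Ric^c=0$ and hence $S^c=0$.'' The cone $(C,g^c)$ is \emph{not} a gradient Ricci soliton; the only identity available there is $\nabla\nabla f^c=\tfrac12 g^c$ for $f^c=\rho^2/4$, which holds on any metric cone and carries no information about $Ric^c$. The asymptotic cone of an asymptotically conical K\"ahler--Ricci shrinker is a K\"ahler cone whose link need not be Sasaki--Einstein, so in general $Ric^c\neq 0$; its scalar curvature is a nonzero function homogeneous of degree $-2$, i.e.\ $S(x,\rho)=\rho^{-2}S(x,1)$. (Consistently, the quadratic curvature decay $|Rm|\le Cr^{-2}$ on $M$ does not improve under the blow-down rescaling.) Because of this, neither the $-\tfrac{q}{2}S|u|_h^2$ term in \eqref{eqn: writting D by divergence theorem} nor the $S$- and $Ric$-dependent terms in Lemma \ref{lem: first variation formula kahler case} can be discarded.

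With these terms retained, after applying the first variation formula one is left with the extra contributions
\[
2r^{1-n}\int_{\rho<r}\tfrac{q}{2}S\,\langle\nabla|u|_h^2,\nabla f^c\rangle
\;-\;r^{2-n}\int_{\rho=r}\tfrac{q}{2}S\,|u|_h^2
\;+\;(n-2)\,r^{1-n}\int_{\rho<r}\tfrac{q}{2}S\,|u|_h^2,
\]
and the substantive content of the lemma is showing that these cancel: one integrates the first term by parts and uses the homogeneity identity $\rho\,\partial_\rho S=-2S$ on the cone. This cancellation is entirely absent from your argument, which instead sets $S^c\equiv 0$ without justification. In the special case of a Ricci-flat asymptotic cone your computation is correct and coincides with the paper's, but as written the proof does not establish the lemma in the generality in which it is stated and used.
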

\begin{proof}
By the divergence theorem, we can write $D^c(r)$ as
\[
D^c(r) = \frac{1}{2}r^{2-n} \int_{\rho = r} \n_{\n \rho} |u|^2_h =  \frac{1}{2} r^{2-n} \left( \int_{\epsilon < \rho < r} \Delta |u|_h^2 + \int_{\rho =  \epsilon} \n_{\n \rho} |u|_h^2\right) .
\]
Since $|u|_h$ is bounded and the curvature is $O(r^{-2})$ near the vertex, by the gradient estimate we have $|\n|u|| = O(r^{-1})$, hence
\[
\int_{\rho  = \epsilon} \n_{\n \rho} |u|_h^2 \to 0 \text{ as } \epsilon \to 0
\]
when the real dimension $n > 2$. Therefore by (\ref{eqn: laplacian of norm square of u}) we have
\[
D^c(r) =  \frac{1}{2} r^{2-n} \int_{\rho < r} \Delta |u|_h^2 = r^{2-n} \int_{\rho < r} |\n u|_h^2 - \frac{q}{2} S |u|_h^2,
\]
and then 
\begin{equation}\label{eqn: derivative of D^c -- 1}
D^c(r)' = \frac{2-n}{r} D^c(r) + r^{2-n} \int_{\rho = r}|\n u|_h^2 - \frac{q}{2} S |u|_h^2.
\end{equation}
Similarly we can verify that integration by parts works in the proof of Lemma \ref{lem: first variation formula kahler case}, with $\Omega = \{\rho < r\}$ and $X = \n f^c = \frac{\rho}{2} \n \rho$, which then yields 
\[
\frac{r}{2}\int_{\rho = r} |\n u|_h^2 - r \int_{\rho = r} |\n_{\vn} u|^2 = \frac{n-2}{2} \int_{\rho < r} |\n u|_h^2 + \int_{\rho < r} \frac{q}{2} S \la \n |u|_h^2, \n f^c\ra.
\]
Apply the equation above in (\ref{eqn: derivative of D^c -- 1}) yields
\begin{equation}\label{eqn: derivative of D^c -- 2}
\begin{split}
D^c(r)' = & r^{2-n} \int_{\rho = r} 2 |\n_{\vn} u|_h^2 + 2 r^{1-n} \int_{\rho < r} \frac{q}{2} S \la \n |u|_h^2, \n f^c\ra \\
& -  r^{2-n} \int_{\rho = r} \frac{q}{2} S |u|_h^2 + (n-2) r^{1-n} \int_{\rho < r} \frac{q}{2} S |u|_h^2.
\end{split}
\end{equation}
Using integration by parts in the second term on the RHS (this can be verified by the same method as before), we get 
\[
\int_{\rho < r} S \la \n |u|_h^2, \n f^c\ra = - \frac{n}{2} \int_{\rho < r} S |u|_h^2 - \frac{1}{2} \int_{\rho < r} \rho \frac{\pd S}{ \pd \rho} |u|_h^2 + \frac{r}{2} \int_{\rho = r} S|u|_h^2.
\]
Note that the scalar curvature satisfies $S(x,\rho) = \rho^{-2} S(x, 1)$, hence 
\[
\frac{\pd S}{\pd \rho} = -\frac{2}{\rho } S.
\]
Then we observe that the curvature terms in (\ref{eqn: derivative of D^c -- 2}) cancel out, and we have 
\[
D^c(r)' =  r^{2-n} \int_{\rho = r} 2 |\n_{\vn} u|_h^2 .
\]
\end{proof}

Then by direct calculation and applying Holder's inequality, we get 
\begin{lem}\label{lem: monotonicity of U^c}
Suppose $|u|_h$ is bounded near the vertex, then we have 
\[
U^c(r)' \geq 0,
\]
and the equality holds if and only if $\n_{\n \rho} u = \frac{U(r)}{r} u$.
\end{lem}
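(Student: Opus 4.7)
The plan is a direct computation of $(U^c)'$ using the two formulas already established in this subsection, together with a double application of Cauchy--Schwarz. First I would write
\[
(U^c)'(r) = \frac{(D^c)'(r)}{I^c(r)} - \frac{D^c(r) (I^c)'(r)}{I^c(r)^2} = \frac{(D^c)'(r)}{I^c(r)} - \frac{2 D^c(r)^2}{r I^c(r)^2},
\]
where the second equality uses the identity $(I^c)'(r) = 2D^c(r)/r$ from (\ref{eqn: derivative of I}) specialized to the cone (where the scalar-curvature correction in that formula vanishes, because $\rho$ replaces $b$ and $|\n \rho| = 1$). Multiplying through by $I^c(r)^2$, the nonnegativity of $(U^c)'(r)$ reduces to the pointwise-on-$r$ inequality
\[
(D^c)'(r)\, I^c(r) \;\geq\; \frac{2 D^c(r)^2}{r}.
\]

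Next I would insert Lemma \ref{lem: monotonicity of D^c} on the left-hand side and the defining expressions for $I^c$ and $D^c$ on both sides. Using $\vn = \n \rho$ and the compatibility of the Hermitian connection with $h$, we have $\n_{\n\rho}|u|_h^2 = 2\operatorname{Re} h(\n_{\vn} u, u)$. Cauchy--Schwarz pointwise on $\{\rho = r\}$ gives
\[
\bigl|\operatorname{Re} h(\n_{\vn} u, u)\bigr|^2 \leq |\n_{\vn} u|_h^2 \, |u|_h^2,
\]
and then Cauchy--Schwarz applied to the integral over $\{\rho = r\}$ yields
\[
D^c(r)^2 = \left( \tfrac{1}{2} r^{2-n} \int_{\rho = r} 2\operatorname{Re} h(\n_{\vn} u, u) \right)^2 \leq r^{2(2-n)} \int_{\rho=r} |\n_{\vn} u|_h^2 \cdot \int_{\rho=r} |u|_h^2.
\]
On the other hand, multiplying the formulas for $(D^c)'(r) I^c(r)$ out gives exactly
\[
(D^c)'(r)\, I^c(r) = 2 r^{3-2n}\int_{\rho=r}|\n_{\vn} u|_h^2 \cdot \int_{\rho=r}|u|_h^2,
\]
so that $(D^c)'(r) I^c(r) \geq 2 D^c(r)^2 / r$ follows. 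This proves $(U^c)'(r) \geq 0$.

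For the equality case, I would trace back through the two Cauchy--Schwarz steps. The integral Cauchy--Schwarz becomes equality precisely when $|\n_{\vn} u|_h$ is a (nonnegative) constant multiple of $|u|_h$ on $\{\rho = r\}$, and the pointwise Cauchy--Schwarz becomes equality precisely when $\n_{\vn} u$ is a scalar multiple of $u$ at each point. Combined, these two conditions force $\n_{\vn} u = \mu\, u$ on $\{\rho = r\}$ for a single complex scalar $\mu$, and equality in $|\operatorname{Re} h(\n_{\vn}u, u)|^2 = |h(\n_{\vn} u, u)|^2$ forces $\mu \in \mathbb{R}$. Plugging $\n_{\n\rho} u = \mu u$ back into the definitions yields $D^c(r) = \mu\, r\, I^c(r)$, so $\mu = U^c(r)/r$, as asserted. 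The only subtle point --- and the step I expect to require the most care --- is this last identification of the proportionality constant: one must be sure that $\mu$ cannot depend on the point of $\{\rho = r\}$ and must be real, which is precisely what the two equality clauses together guarantee.
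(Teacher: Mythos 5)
Your proof is correct and follows essentially the same route as the paper, whose own proof is a one-line appeal to H\"older's inequality plus the observation that equality forces $\n_{\n\rho}u=\mu u$ with $\mu=U^c(r)/r$; you have simply filled in the Cauchy--Schwarz chain $(D^c)'(r)\,I^c(r)\ge 2D^c(r)^2/r$ and the equality analysis explicitly, using the already-established identities $(I^c)'=2D^c/r$ and Lemma \ref{lem: monotonicity of D^c}. The only detail glossed over (by both you and the paper) is why the pointwise factor $\mu$ is a single constant rather than $\pm c$ varying over $\{\rho=r\}$, which follows from the constant sign of $\operatorname{Re}h(\n_{\vn}u,u)$ needed for equality in $\bigl|\int \operatorname{Re}h(\n_{\vn}u,u)\bigr|\le\int|\n_{\vn}u|_h|u|_h$.
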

\begin{proof}
This inequality comes from Holder's inequality, when equality holds we have $\n_{\n \rho} u = \mu u$ for some constant $\mu$, then direct calculation of $U(r)$ shows that $\mu = \frac{U(r)}{r}$,
\end{proof}

\begin{thm}\label{eqn: weak monotonicity of U in the AC case}
There exists a discrete subset $\mathcal{S} \subset \mathbb{R}$, given any $\delta \in (0, 1)$, and $\alpha \in \mathbb{R}_{\color{blue} +} \backslash \mathcal{S}$, there exists some constant $R_0$, such that for any $r > R_0$, and for any holomorphic section $u$, if $U_{u}(r) < \alpha$, we must have $U_{u}(\theta r) < \alpha$ for all $\theta \in (\delta, 1-\delta)$.
\end{thm}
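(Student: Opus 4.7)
The strategy is to argue by contradiction via a blow-down to the asymptotic cone, where the frequency $U^c$ is monotone non-decreasing by Lemma \ref{lem: monotonicity of U^c}. I take $\mathcal{S}$ to be the set of degrees of radially homogeneous nontrivial holomorphic sections of $K_C^{-q}$ on the cone $(C, g^c)$: for $u = \rho^\mu \phi(x)$ a direct computation gives $U^c_u \equiv \mu$, and separation of variables on the link $\Sigma$ reduces the admissible $\mu$ to the spectrum of an elliptic operator on a compact manifold coupled with the holomorphicity condition, so $\mathcal{S}$ is a discrete subset of $\R$.

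Suppose the claim fails for some $\delta \in (0,1)$ and $\alpha \in \R_+ \setminus \mathcal{S}$: then one produces sequences $r_i \to \infty$, $\theta_i \in [\delta, 1-\delta]$, and nontrivial holomorphic sections $u_i$ of $K_M^{-q}$ with $U_{u_i}(r_i) < \alpha$ but $U_{u_i}(\theta_i r_i) \geq \alpha$. The next step is to rescale $g_i := r_i^{-2}g$; by quadratic curvature decay (equivalent to asymptotic conicity) and non-collapsing of Ricci shrinkers, $(M, J, g_i)$ converges in the pointed $C^\infty_{\mathrm{loc}}$ Cheeger--Gromov sense, away from the vertex, to $(C \setminus\{o\}, J^c, g^c)$; the identities $|\n b|^2 = 1 - S/b^2$ and $S = O(b^{-2})$ imply $b/r_i \to \rho$ in $C^\infty_{\mathrm{loc}}$. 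After normalising so that $I_{u_i}(r_i) = 1$, the Bochner-type identity (\ref{eqn: laplacian of norm square of u}) and elliptic regularity yield uniform $C^k_{\mathrm{loc}}$ bounds on the $u_i$ in the rescaled metrics, so a subsequence converges smoothly on each annulus $\{A < \rho < B\} \subset C$ to a nontrivial holomorphic section $u_\infty$ of $K_C^{-q}$ on $C\setminus\{o\}$.

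The last step is to pass the frequencies to the limit. Scale-covariance of the definitions of $I(r)$ and $D(r)$, together with convergence of the level sets $\{b = s r_i\}$ (with induced measures) to $\{\rho = s\}$ on the cone, gives $U^g_{u_i}(sr_i) \to U^c_{u_\infty}(s)$ uniformly for $s$ in compact subsets of $\R_+$. Thus $U^c_{u_\infty}(1) \leq \alpha$ and $U^c_{u_\infty}(\theta_\infty) \geq \alpha$ for some $\theta_\infty \in [\delta, 1-\delta]$, and monotonicity of $U^c$ forces $U^c_{u_\infty} \equiv \alpha$ on $[\theta_\infty, 1]$. The equality clause of Lemma \ref{lem: monotonicity of U^c} then gives $\n_{\n \rho} u_\infty = (\alpha/\rho)\, u_\infty$ on this annulus, i.e.\ $u_\infty = \rho^\alpha \phi(x)$ there; unique continuation for the holomorphic equation (\ref{eqn: Laplacian of a holomorphic section}) propagates the radial homogeneity to all of $C \setminus \{o\}$, placing $\alpha$ in $\mathcal{S}$ and yielding the desired contradiction.

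The main technical difficulty I anticipate is the passage of the frequency to the limit, especially because Lemma \ref{lem: monotonicity of U^c} was proved under the hypothesis that $|u|_h$ is bounded near the vertex, which is not automatic for $u_\infty$. I would remedy this by proving an annular version of the lemma: for a holomorphic section of $K_C^{-q}$ on $\{A < \rho < B\}$, the derivative $D^c(r)'$ in $(A,B)$ acquires only the boundary term at $\rho = r$, and Cauchy--Schwarz (exactly as in the proof of Lemma \ref{lem: monotonicity of U^c}) still yields $U^c{}' \geq 0$ on this interval. A secondary issue is quantifying the error of the asymptotic identity $b \approx \rho$ uniformly, so as to justify $U^g_{u_i}(sr_i) - U^c_{u_\infty}(s) \to 0$ and the convergence of boundary integrals; this is handled by the $O(r^{-2})$ estimate on $|\n b|^2 - 1$ coming from the quadratic curvature decay, together with standard convergence of Riemannian submanifold integrals under $C^\infty_{\mathrm{loc}}$ Cheeger--Gromov convergence.
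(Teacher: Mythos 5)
Your overall strategy is the same as the paper's: contradiction, blow-down to the asymptotic cone, the equality case of the cone frequency monotonicity forcing radial homogeneity, and separation of variables reducing $\alpha$ to a discrete spectral set on the compact link $\Sigma$. The final step and the definition of $\mathcal{S}$ match the paper essentially verbatim. However, there are two interlocking gaps in the middle of your argument. First, your claim of uniform $C^k_{\mathrm{loc}}$ bounds for the normalized $u_i$ on the annuli presupposes a uniform two-sided bound on $I_{u_i}(r)$ for all $r\in[\theta_i r_i, r_i]$, but the hypotheses only give you frequency information at the two endpoints $r_i$ and $\theta_i r_i$; since on $M$ itself the frequency is not known to be monotone (that is exactly what the theorem is trying to establish), $I_{u_i}$ could a priori degenerate or blow up at intermediate radii, and the normalization $I_{u_i}(r_i)=1$ alone does not prevent this. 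Second, the remedy you propose for the vertex issue --- an ``annular version'' of Lemma \ref{lem: monotonicity of U^c} --- does not hold as stated: the clean identity $D^c(r)' = r^{2-n}\int_{\rho=r}2|\n_{\vn}u|_h^2$ in Lemma \ref{lem: monotonicity of D^c} comes from applying the first variation formula (Lemma \ref{lem: first variation formula kahler case}) with $X=\n f^c$ on the full ball $\{\rho<r\}$; on an annulus $\{A<\rho<r\}$ the same identity acquires inner boundary terms at $\rho=A$ with no definite sign, so Cauchy--Schwarz no longer yields $U^{c\prime}\ge 0$. (Indeed, frequency monotonicity on annuli fails already for harmonic functions, e.g.\ $u=(r-r^{-1})e^{i\theta}$ on an annulus in $\R^2$; what saves holomorphic objects is the $L^2$-orthogonality on each link of the homogeneous modes, which is a spectral decomposition argument you would have to supply, not a one-line Cauchy--Schwarz.)

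The paper's device for both problems is to replace your homothetic rescaling by the pullback under $\Phi_{t_i}\circ\Psi$, where $\Phi_t$ are the biholomorphisms generated by $\n f$ and $\Psi$ comes from the resolution of the cone. The resulting sections $\tilde v_i$ are then \emph{exactly} holomorphic sections of $K_C^{-q}$ on the \emph{exact} cone, defined and bounded near the vertex, so Lemmas \ref{lem: monotonicity of D^c} and \ref{lem: monotonicity of U^c} apply to each approximant $\tilde v_i$ directly (not merely to the limit). From $U^c_{\tilde v_i}(\theta)\gtrsim\alpha_0>0$ one gets $D^c_{\tilde v_i}>0$ and hence $I^c_{\tilde v_i}$ nondecreasing on $[\theta,1]$, and from $U^c_{\tilde v_i}(1)\lesssim\alpha_0$ together with $(\ln I^c)'=2U^c/r$ one gets the uniform two-sided control of $I^c_{\tilde v_i}$ on the whole annulus. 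This is what supplies the compactness and the nontriviality and positivity of $I^c_{u_\infty}$ that your outline takes for granted. If you want to keep your homothetic blow-down, you would need either a quantitative almost-monotonicity for the approximants with curvature-decay error terms, or a genuine annular monotonicity proved via mode decomposition on the link --- both substantially more work than the sentence you allot to it.
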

\begin{proof}
We argue by contradiction. Suppose the theorem is not true, then there is an $\alpha_0$, a sequence of radius $r_i \to \infty$, a sequence $\theta_i \in (\delta, 1-\delta)$, and a sequence of holomorphic sections $u_i$, such that 
\[
U_{u_i}(r_i) < \alpha_0, \text{ while } U_{u_i}(\theta_i r_i) \geq \alpha_0.
\]
By passing to a subsequence, we can assume WLOG that $\theta_i \to \theta \in [\delta, 1-\delta]$.

{
We need to take a limit of this sequence of sections $u_i$ after properly scaling to get a contradiction. 

By the asymptotically conical assumption in the K\"ahler case, for some comapct subset $\Omega \subset M$, $\Psi: C = \Sigma \times (0, \infty) \to M \backslash \Omega$ is a biholomorphism w.r.t. the pullback complex structure. (In fact, there is a holomorphic resolution map $\pi: M \to C$ which is biholomorphic onto the smooth part of $C$ \cite{CDS2024}.) To work with holomorphic sections, it is beneficial to use the biholomorphisms generated by $\n f$ to pullback the metric, rather than using homotheties. Let $\Phi_t$ be the family of biholomorphisms generated by $\n f$. By the quadratic curvature decay, it is easy to compute that $f(\Phi_t) = e^t f + O(f^{-1})$, hence $e^t r^2 \approx r(\Phi_t)^2$. 

Fix $0< \sigma < \delta$ sufficiently small. Take a sequence of $t_i = \ln (r_i^2/4) \to \infty$, let $ g_i : = 4^{-1}e^{-t_i} \Psi^* \Phi_{t_i}^*g$, then $g_i$ converges subsequentially on $C$, $b_i := e^{-t_i/2} \Psi^*\Phi_{t_i}^* b$ converges subsequentially to $\rho^c$, and the convergence is smooth and uniform on $\Sigma \times [a, b]$ for any compact interval $[a, b]$ where $a> 0$.

Define $v_i = \frac{u_i}{\sqrt{I_{u_i}(\frac{r_i}{2})}}$, then we have $I_{v_i}(\frac{r_i}{2})=1$.
Let $\phi_i = \Phi_{t_i}\circ \Psi$, and let $\tilde{v}_i = \phi_i^{*} v_i$, then $\tilde{v}_i$ is a holomorphic section on the cone which is bounded near the vertex. We need to show that $\tilde{v}_i$ converges to a nontrivial holomorphic section on the annulus region $\theta < \rho^c < 1$. 

First we need to show that $I^c_{\tilde{v}_i}(r) > 0$, for all $ \theta \leq r \leq 1 $ when $i$ is large enough. To do this, let's take a sequence of rescaling of $\tilde{v}_i$, let $\hat{v}_i = K_i \tilde{v}_i$ for some factors $K_i$ such that $I_{g_i, \hat{v}_i}(\theta) = 1$, it sufficies to show that $I^c_{\hat{v}_i}(r)> 0$.  Because $\{b_i = 1/2\}=\phi_i^{-1}\{b=r_i/2\}$ converges to $\{\rho^c=1/2\}\subset C$, and the metrics converge uniformly on compact domains, for any $\epsilon > 0$, there exists some $i_0$ large enough, such that when $i > i_0$, there are diffeomrphisms $\psi_i: \{\rho^c = r\} \to \{b_i = r\}$, and $(1-\epsilon)g^c|_{\rho^c = r}< \psi_i^*g_i|_{b_i = r} < (1+\epsilon)g^c|_{\rho^c = r}$. Then we have 
\[
(1-\epsilon)I_{g_i, \hat{v}_i}(r)< I^c_{ \psi_i^*\hat{v}_i}(r) < (1+\epsilon)I_{g_i, \hat{v}_i}(r),
\]
whenever $I_{g_i, \hat{v}_i}(r) > 0$, for possibly re-choice of  $\epsilon> 0$. 
In particular this implies $1-\epsilon< I^c_{\hat{v}_i}(\theta)< 1+\epsilon$ by continuity. Similarly we can compare $D^c_{\hat{v}_i}(\theta)$ with $D_{g_i, \hat{v}_i}(\theta)$, and we have $U^c_{\hat{v}_i}(\theta) > (1-\epsilon) \alpha_0$ for some $\epsilon > 0$. In particular, we have $D^c_{\hat{v}_i}(\theta) > 0$, hence by Lemma \ref{lem: monotonicity of D^c}, we have $D^c_{\hat{v}_i}(r) > 0$ for all $r > \theta$, then $I^c_{\hat{v}_i}(r)$ is increasing for $r > \theta$, hence it stays positive. Consequently, since scaling the section does not change the sign of $I^c$ and $D^c$, we have $I^c_{\tilde{v}_i}(r)> 0$ and $D^c_{\tilde{v}_i}(r)> 0$ for $r \geq \theta$, so $U^c_{\tilde{v}_i}(r)$ is well-defined for $r\geq \theta$. By similar argument, we can see that $U^c_{\tilde{v}_i}(1) < (1+\epsilon) \alpha_0$
for some $\epsilon > 0$ when $i$ is large enough. Then by Lemma \ref{lem: monotonicity of U^c} we have
\begin{equation}\label{eqn: U^c upper and lower bound}
(1-\epsilon)\alpha_0< U^c_{\tilde{v}_i}(r) \leq (1+\epsilon) \alpha_0,
\end{equation}
for all $\theta \leq r \leq 1 $, which implies that $I^c_{\tilde{v}_i}(r) > 0$ for all $\theta \leq r \leq 1 $.



Then we can show that $I^c_{\tilde{v}_i}(r)$ is uniformly controlled when $r\in[\theta, 1]$.
By argument similar to the above we have $1-\epsilon < I^c_{\tilde{v}_i}(1/2) < 1+\epsilon$. Then by $(\ln I^c_{ \tilde{v}_i}(r))' = \frac{2 U^c_{\tilde{v}_i}(r)}{r}$, which is now nonnegative and bounded from above, we have
\[
I^c_{ \tilde{v}_i}(r) \leq I^c_{ \tilde{v}_i}(1/2)\quad \text{for }  \theta \leq r \leq \frac{1}{2};
\]
and
\[
I^c_{\tilde{v}_i}(r) \leq I^c_{\tilde{v}_i}(1/2) (2r)^{2(1+\epsilon)\alpha_0}, \quad \text{for } \frac{1}{2} \leq r \leq 1.
\]
After integrating in the variable $r$, we have uniform $L^2$ control of $\tilde{v}_i$ on any annulus region contained in $\{\theta \leq  \rho^c \leq 1\}$, then using the mean value inequality locally we obtain uniform $L^\infty$ control in the annulus domain $\{\sigma\leq \rho \leq 1\}$, and uniform derivative estimates follow from standard theory. So $\tilde{v}_i$ subsequentially converges uniformly in the annulus $\{ \theta +\epsilon \leq \rho^c \leq 1-\epsilon \}$ to a limit $\tilde{v}_\infty$, satisfying
\[
(1-\epsilon) < I^c_{ \tilde{v}_\infty}(1/2)< (1+\epsilon),
\]
hence it is a nontrivial holomorphic section. Similarly as above, we can show uniform positive lower estimates 
\[
I^c_{\tilde{v}_\infty}(r) \geq (1-\epsilon)(2r)^{-(1+\epsilon)\alpha_0} \quad \text{ for } \theta+\epsilon \leq r \leq \frac{1}{2},
\]
and
\[
I^c_{\tilde{v}_\infty}(r) \geq I^c_{\tilde{v}_\infty}(\frac{1}{2}) > 1-\epsilon \quad \text{for } \frac{1}{2} \leq r \leq 1-\epsilon.
\]
Therefore we also have convergence of the frequency,
\[
(1-\epsilon)\alpha_0 \leq U^c_{ \tilde{v}_\infty}(r) \leq (1+\epsilon) \alpha_0, \quad \theta + \epsilon < r < 1-\epsilon. 
\]
Since $\epsilon$ is arbitrary, we can let $\epsilon \to 0$ and extract a diagonal subsequence of $\tilde{v}_i$, which converges to a limit $v_\infty$ on $\{\theta < \rho^c< 1\}$, such that 
\[
U^c_{{v}_\infty}(r) = \alpha_0, \quad \theta < r < 1.
\]
}
By Lemma \ref{lem: monotonicity of U^c} we have 
\[
\n_{\n \rho} v_\infty = \frac{\alpha_0}{\rho} v_\infty, \quad \theta \leq \rho \leq 1. 
\]
We derive from the above equation that 
\[
v_\infty(x, \rho ) = \rho^{\alpha_0} v_\infty(x, 1), \quad \theta \leq \rho \leq 1. 
\]
Let $\Sigma$ denote the $\rho = 1$ cross section of the cone with the induced metric. Write $g^c = d\rho^2 + \rho^2 g^\Sigma$, the hermitian metric $h$ on $K_{C}^{-q}$ can be locally written as $h(u,v) = u\bar{v} e^{-\psi}$ for any sections locally represented by $u,v$, where
\[
\psi =  -\frac{q}{2} \ln \det g^c = - \frac{q}{2} \ln \det g^\Sigma -q(n-1) \ln \rho.
\]

Let $X = J^c(\n \rho)$, it is well-known that $X$ is a unit Killing vector field on the cone sections with constant $\rho$. For simplicity we denote $\tilde{X} = X(\cdot, 1)$, we have $X (\cdot, \rho)= \rho^{-1} \tilde{X}$. Let $e_1 = \frac{1}{\sqrt{2}} (\n \rho - \sqrt{-1} X)$, $\bar{e}_1 = \frac{1}{\sqrt{2}} (\n \rho + \sqrt{-1} X)$. Note that both $\n \rho$ and $X$ are geodesic vector fields, hence by direct calculation we have $\n_{e_1} \bar{e}_1 + \n_{\bar{e}_1} e_1 = 0$.

Complete $e_1$ into a unitary frame by joining $e_2, e_3 ..., e_n,$ tangent to the cone sections, $e_i = \rho^{-1} E_i$, $i = 2, 3, ..., n$, where $\{E_i\}_{i=2}^n$ are unitary on $\Sigma$. The connection $(1,0)$-form is given by $\theta =  - \pd \psi$. By direct computation we have 
\[
\theta(e_1) = -\frac{1}{\sqrt{2}} (\n \rho - \sqrt{-1} X)\psi = \frac{q(n-1)}{\sqrt{2}} \frac{1}{\rho}-\frac{q\sqrt{-1}}{2\sqrt{2}} \rho^{-1} \tilde{X} (\ln \det g^\Sigma).
\]

For the holomorphic section $v_\infty$, we can compute in the unitary frame above to get 
\[
\begin{split}
& \left( \n_{e_1}\n_{\bar{e}_1} + \n_{\bar{e}_1}\n_{e_1} \right) v_\infty\\
= & e_1 (\bar{e}_1 v_\infty) + \bar{e}_1 (e_1 v_\infty) + \bar{e}_1(\theta(e_1)) v_\infty  \\
= & \frac{\pd^2}{\pd \rho^2} v_\infty + X (X v_\infty) + \frac{1}{\sqrt{2}} \frac{\pd}{\pd \rho} \theta(e_1) v_\infty + \frac{\sqrt{-1}}{\sqrt{2}} \rho^{-1} \tilde{X}(\theta(e_1)) v_\infty\\
= &  \rho^{\alpha_0 - 2} \tilde{X}(\tilde{X} v_\infty(\cdot, 1)) \\
& + \left(\alpha_0 (\alpha_0 - 1 ) - \frac{q(n-1)}{2} + \frac{q\sqrt{-1}}{4} \tilde{X} (\ln \det g^\Sigma) + \frac{q}{4} \tilde{X}\tilde{X}(\ln \det g^\Sigma)\right)\rho^{\alpha_0 - 2} v_\infty(\cdot, 1).
\end{split}\]
Hence 
\[
\begin{split}
 \Delta_{g^c} v_\infty = &\sum_{i = 1}^n \left( \n_{e_i }\n_{\bar{e}_i} + \n_{\bar{e}_i}\n_{e_i} \right) v_\infty \\
 = & \left(\alpha_0 (\alpha_0 - 1 ) - \frac{q(n-1)}{2}\right) \rho^{\alpha_0 - 2} v_\infty(\cdot , 1) +   \rho^{\alpha_0 - 2} L^\Sigma v_\infty(\cdot, 1),
\end{split}
\]
where 
\[ L^\Sigma : = \tilde{X} \tilde{X} + \sum_{i = 2}^n \left( \n_{E_i} \n_{\bar{E}_i} + \n_{\bar{E}_i} \n_{E_i} \right) + \frac{q\sqrt{-1}}{4} \tilde{X} (\ln \det g^\Sigma) + \frac{q}{4} \tilde{X}\tilde{X}(\ln \det g^\Sigma)
\]
is an elliptic operator of second order on sections of the vector bundle $K_C^{-q}$ restricted to $\Sigma$. 

On the other hand, by (\ref{eqn: Laplacian of a holomorphic section}) we have 
\[
\Delta_{g^c} v_\infty = - \frac{q}{2} S_{g^c} v_\infty = -\frac{q}{2} \rho^{\alpha_0-2} S_{g^c}(\cdot, 1) v_\infty(\cdot, 1).
\]
Therefore we must have 
\[
 \left( L^\Sigma +\frac{q}{2} S_{g^c} \right) v_{\infty}(x, 1) =- \left(\alpha_0 (\alpha_0 - 1 ) - \frac{q(n-1)}{2}\right) v_\infty(x, 1),
\]
i.e. the number $- \alpha_0(\alpha_0 - 1) + \frac{1}{2}(n-1) q$ has to be an eigenvalue of the operator $L^\Sigma+\frac{q}{2} S_{g^c}(\cdot, 1)$. By the compactness of $\Sigma$, the spectrum of this operator is discrete, this finishes the proof. 
\end{proof}

\section{Existence of holomorphic sections with polynomial growth}\label{section: existence}
In this section, let $(M, g, f)$ be a gradient K\"ahler Ricci shrinker which is asymptotically conical, keep the notations of the previous section, and denote $D_r=\{b < r\}$.  We construct holomorphic sections of $K_M^{-q}$ with polynomial growth. Then as an application we show that the Kodaira map constructed by polynomial growth sections of $K_M^{-q}$ is a holomorphic embedding.

\begin{lem}\label{lem: local construction of holomorphic sections}
There exists a constant $\bar{q}$ depending on the geometry, such that for any $x \neq y$ in $D_{\frac{r_0}{5}} = \{b < \frac{r_0}{5}\}$, where $r_0> 10$ is any fixed number, and for any given $K_M^{-q}$-valued $(1, 0)$-form $\alpha$ at $x$, there exists a holomorphic section $u_0$ of $K_M^{-q}$ for $q \geq \bar{q}$, such that $u_0(x) \neq 0$, $u_0(y) = 0$, $\pd u_0(x) = \alpha$, moreover,
\[
\sup_{b < r_0} |u_0|_h^2 \leq C, \quad and \quad \sup_{ b =r} |u_0|_h^2 \geq 1 \quad for \quad r \in [ \frac{r_0}{4} , r_0],
\]
where $C$ is a constant depending on the geometry and $n, q, r_0$.
\end{lem}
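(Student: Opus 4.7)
The plan is a standard Hörmander $L^2$ construction adapted to the shrinker. The crucial observation is the shrinker identity $\mathrm{Ric}+i\partial\bar\partial f = \tfrac{1}{2}\omega$: the bundle $K_M^{-q}$ with its induced metric $h$ and the additional weight $e^{-qf}$ has Chern curvature exactly $\tfrac{q}{2}\omega$, uniformly positive. This provides the positivity needed for Hörmander to work on the bounded domain $D_{r_0}$.

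First I build a smooth local model. Choose holomorphic coordinates $(z^1,\ldots,z^m)$ centered at $x$ and a holomorphic frame $e$ of $K_M^{-q}$ on a ball $U\subset D_{r_0}$ disjoint from $y$, and write $\alpha = \sum \alpha_i\,dz^i\otimes e|_x$. With a cutoff $\chi$ that is $1$ near $x$ and supported in $U$, define $v_0 = \chi(z)(c_0+\sum \alpha_i z^i)\,e$ for a normalization constant $c_0\neq 0$. Then $v_0(x)=c_0\,e|_x$, $\partial v_0(x)=\alpha$, $v_0(y)=0$, and $\bar\partial v_0$ is smooth and supported where $d\chi\neq 0$. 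Next, solve $\bar\partial w = \bar\partial v_0$ on $D_{r_0}$ via Hörmander's theorem with the psh weight
\[
\varphi = qf + (m+1)\log|z|^2 + m\,\psi_y,
\]
where $\psi_y$ is a psh function on $D_{r_0}$ with an isolated $\log$-pole at $y$. Away from $\{x,y\}$,
\[
q\,\mathrm{Ric}(h) + i\partial\bar\partial\varphi = \tfrac{q}{2}\omega + i\partial\bar\partial\bigl((m+1)\log|z|^2 + m\psi_y\bigr) \geq \tfrac{q}{4}\omega,
\]
once $\bar q$ is taken large enough to absorb the geometry-dependent negative contributions from smoothly extending $\psi_y$. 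Hörmander then yields $w$ with
\[
\int_{D_{r_0}}|w|_h^2\, e^{-\varphi}\,dv \leq \tfrac{C}{q}\int_{D_{r_0}}|\bar\partial v_0|_h^2\, e^{-\varphi}\,dv < \infty.
\]
The $|z|^{-2(m+1)}$ pole forces $w$ to vanish to order $\geq 2$ at $x$ (so $w(x)=0$ and $\partial w(x)=0$), and the pole at $y$ forces $w(y)=0$. Setting $u_0:=v_0-w$ yields a holomorphic section with $u_0(x)=c_0\,e|_x\neq 0$, $\partial u_0(x)=\alpha$, $u_0(y)=0$.

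For the sup bounds, the upper estimate $\sup_{D_{r_0}}|u_0|_h^2\leq C$ follows by combining the $L^2$ bound on $w$ with the mean-value inequality of Lemma \ref{lem: mean value inequality} applied to $|w|_h^2$, which satisfies $\Delta|w|_h^2\geq -qS|w|_h^2$ on the open set where $w$ is holomorphic, together with the pointwise bound on $|v_0|_h$. For the lower estimate $\sup_{b=r}|u_0|_h^2\geq 1$ on $r\in[r_0/4,r_0]$, I would first normalize $c_0$ large enough so $|u_0(x)|_h$ is bounded below uniformly, and then apply a generalized maximum principle for the subharmonic-type inequality $\Delta|u_0|_h^2\geq -qS|u_0|_h^2$ on the domain $D_r$ (using $x\in D_{r_0/5}\subset D_r$) to propagate the lower bound at $x$ to $\sup_{b=r}|u_0|_h^2\geq c\,|u_0(x)|_h^2$ for a geometric constant $c>0$. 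The main technical obstacle is ensuring that the twisted curvature current remains positive after combining the $qf$-weight with the singular weights at $x$ and $y$; this is exactly what pins down the geometry-dependent threshold $\bar q$, and it also forces one to keep careful track of how the $L^2$ norm of $\bar\partial v_0$ against $e^{-\varphi}$ stays finite despite the logarithmic singularities.
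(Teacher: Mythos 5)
Your construction of the jet at $x$ and the vanishing at $y$ follows the same Hörmander/$L^2$ route as the paper (log-pole weights at $x$ and $y$, positivity from $q\,\mathrm{Ric} + q\,\nabla\nabla f = \tfrac{q}{2}g$, and Moser iteration for the sup bound), and that part is fine modulo routine care. The genuine gap is the last step, the uniform lower bound $\sup_{b=r}|u_0|_h^2\geq 1$ for \emph{every} $r\in[\tfrac{r_0}{4},r_0]$. You propose to propagate $|u_0(x)|_h$ outward by a maximum principle for $\Delta|u_0|_h^2\geq -qS|u_0|_h^2$, but $|u_0|_h^2$ is not subharmonic: the induced metric on $K_M^{-q}$ has curvature $q\,\mathrm{Ric}$, the scalar curvature is strictly positive on a nonflat shrinker, and the zeroth-order term $qS$ (with $q\geq\bar q$ large, which is forced on you by the positivity requirement in the Hörmander step) destroys the maximum principle on domains as large as $D_{r_0}$ with $r_0>10$ arbitrary. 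Equivalently, $\log|u_0|_{h}^2$ is plurisubharmonic only when the bundle metric has nonpositive curvature, which is the opposite of the situation here; so there is no three-circle or interior-maximum argument available, and even a weakened version would produce a constant degenerating in $q$ and $r$ rather than the uniform bound $\geq 1$ that is needed later (it is used in Theorem \ref{thm: existence of polynomial growth holomorphic sections} to bound $\int_{\Sigma(r_i',t_i)}|u_0|_h^2$ from below).

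The paper closes this gap with an extra device you are missing: in addition to the poles at $x$ and $y$, it places logarithmic poles along finitely many complex hypersurfaces $H_{z_1},\dots,H_{z_k}$ (local slices $\{w^1=0\}$ transversal to the level sets of $f$) chosen so that $b(\cup_i H_{z_i})$ covers $[\tfrac{r_0}{4},r_0]$, and it adds to the smooth model $v$ locally constant sections $\tilde v_i$ of norm $2^i\geq 1$ supported near each $z_i$. The nonintegrability of the weight along $\cup_i H_{z_i}$ forces the Hörmander solution $u$ to vanish there, so $u_0=u-v$ equals $-\sum\phi_i\tilde v_i$ on $\cup_i H_{z_i}$ and hence $|u_0|_h\geq 1$ on a set meeting every level $\{b=r\}$ with $r\in[\tfrac{r_0}{4},r_0]$. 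This prescription-on-hypersurfaces step (and the corresponding enlargement of $\bar q$ to absorb the extra negative curvature contribution $C(n,\delta_0,k)$ of $\xi_H$) is the essential ingredient your argument needs; without it the lower bound does not follow.
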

\begin{proof}
Since $(M, g)$ is noncollapsed (\cite{CN2009}) with bounded curvature, there exists a $\delta_0>0$, such that for any point $x \in M$, there is a neighbourhood $B(x, \delta_0)$ covered by holomorphic coordinates $w^1, w^2, ..., w^m$, in which the metric $g$ is equivalent to the Euclidean metric $\frac{1}{2}\delta_{ij} < g_{i\jb} < 2 \delta_{ij}$.

Let $\eta$ be a smooth function on the real line, such that $\eta = 1$ on $(-\infty,1]$, $\eta = 0$ on $[2, \infty)$, $-2 < \eta' \leq 0$ and $|\eta''|\leq 4 $. 
Define $\xi_x = n \eta\left( 2 \delta_0^{-2} \sum_{i = 1}^m |w^i|^2 \right)\ln \left( \sum_{i = 1}^m |w^i|^2 \right)$. Then $\xi_x$ is compactly supported, $e^{-\xi_x}$ is nonintegrable around $x$, and one can check by direct calculation that $\sqrt{-1}\pd \dbar \xi_x \geq - C(n, \delta_0) \w$, where $\w$ is the K\"ahler form. For any other point $y \neq x$, construct $\xi_y$ in the same way. 

For any point $z \in D_{r_0}\backslash D_{\frac{r_0}{4}}$, choose holomorphic coordinates $w^1, ..., w^m$ in $B(z, \delta_0)$, such that the complex hypersurface $\{w^1 = 0\}$ is transversal to the level set of the potential function $f$ at $z$. Let $H_{z} = \{w^1 = 0\} \cap B(z, \frac{\delta_0}{12})$. By compactness we can find finitely many points $z_1, ..., z_k$, such that the values of $b$ on the union $\cup_{i=1}^k H_{z_i}$ cover the interval $[\frac{r_0}{2}, r_0]$. Denote the coordinate functions centered at $z_i$ as $w^1_i, ..., w^m_i$. Define $\xi_H = \sum_{i=1}^k n \eta(32\delta_0^{-2} \sum_{j=1}^m |w^j_i|^2) \ln |w^1_i|^2$. By direct calculation we have $\sqrt{-1}\pd \dbar \xi_H \geq -C(n, \delta_0, k) \w$.

By the soliton identity $Ric + \n\n f = \frac{1}{2} g$, the metric $h + q f + \xi_x+\xi_y + \xi_H$ is uniformly positive when $q > \bar{q}:=4 C(n, \delta_0) + 2C(n, \delta_0, k) $. Now we fix $q > \bar{q}$. Since $K_M^{-q}$ is trivialized when restricted to $B(x, \delta_0)$, we can choose a nonzero constant section $\tilde{v}$ on $B(x, \delta_0/2)$, with $|\tilde{v}|_h(x) = 1$. Let $\delta = \min \{ 1, d(x, y) \}$, choose a cut-off function $\phi$ supported on $B(x, \delta)$ that equals $1$ in $B(x, \delta/2)$, and $|\n \phi| \leq 4 \delta^{-1}$. Similarly choose locally constant sections $\tilde{v}_i$ and cutoff functions $\phi_i$ on $B(z_i, \delta_0)$ for each $i = 1,..., k$, such that $|\tilde{v}_i|_h(z_i) = 2^i$ and $|\n \phi_i| \leq 4 \delta_0^{-1}$.  Suppose $\alpha = \sum_{i =1}^m a_i dw^i$ at $x$, where the $a_i$ are local sections. Define 
\[
v = \phi\tilde{v}  -  \sum_{i=1}^m \phi a_i w^i + \sum_{i = 1}^k \phi_i \tilde{v}_i.
\] 
Then $v$ is nontrivial and $v$ vanishes outside the union of $B(x, \delta)\cup_{i = 1}^k B(z_i, \delta_0)$,  and $\dbar v = 0$ on $B(x, \delta/2)$. 

Note that $f_{i \jb} > \frac{1}{4} g_{i \jb}$ outside of a compact set of $M$ since the Ricci curvature has quadratic decay, hence we can modify $f$ by a suitable constant in a bounded domain to show that $M$ is weakly pseudoconvex in the sense of \cite{Dem2012} (Section 5). By Corollary 5.3 in \cite{Dem2012}, there exists a section $u$ of $K_M^{-q}$ such that $\dbar u = \dbar v$, and 
\begin{equation}\label{eqn: L2 estimate}
\int_{M} |u|_h^2 e^{-qf - \xi_x - \xi_y-\xi_H} \leq C(n, q, \delta_0,k) \int_{M} |\dbar v|_h^2 e^{-qf - \xi_x - \xi_y-\xi_H}.
\end{equation}
The integrability of the RHS above is guaranteed since $v$ is chosen to be compactly supported and locally constant at the singular points of the weight function. Observe that the integral on the RHS can be calculated on the support of each $\phi, \phi_i$, the gradient $|\n \phi_i|$ is bounded by $4\delta_0^{-1}$ which is a fixed constant; the gradient $|\n \phi|$ may depend on the distance between $x$ and $y$, but on the support of $\phi$ we have 
\[
\int_{B(x, \delta)} |\n \phi|^2 |\tilde{v}|_h^2 e^{-q f - \xi_x - \xi_y -\xi_H} \leq C \delta^{n-2} e^{- q f(x)},
\]
which is bounded when $n\geq 2$, hence we can verify that the right hand side of (\ref{eqn: L2 estimate}) is bounded from above by a constant depending only on $n, q, \delta_0, k$, the constant $k$ is determined by the geometry of the domain $D_{r_0}$, therefore the weighted $L^2$ integral of $u$ is bounded above by a constant independent of $x$ and $y$:
\begin{equation}\label{eqn: integral upper estimate for u}
\int_{M} |u|_h^2 e^{-qf - \xi_x - \xi_y - \xi_H} \leq C(n, q, \delta_0, k) .
\end{equation}
Let $u_0 = u - v$, then $u_0$ is a holomorphic section. Note that the nonintegrability of $e^{-\xi_x}, e^{-\xi_y}$ implies $\pd u(x) = u(x) = u(y) = 0$, hence $u_0(x) = - \tilde{v}(x) \neq 0$, while $u_0(y) = 0$, and $\pd u_0(x) = \sum_{i=1}^ma_i dw^i$. 

 By (\ref{eqn: laplacian of norm square of u}), and by the boundedness of the curvature, we can use Nash-Moser iteration to derive a pointwise estimate for $u_0$ from the integral bound (\ref{eqn: integral upper estimate for u}),
\[
\sup_{b < r_0} |u_0|_h^2 \leq C,
\] 
where $C$ depends on the geometry and on $n, q, \delta_0, r_0$. The nonintegrability of $e^{-\xi_H}$ implies that $u = 0$ on $H_{z_i}$, $i = 1,..., k$, hence for any $z \in \cup H_{z_i}$, we have $u_0(z) = - \sum \phi_i(z) \tilde{v}_i(z)$. Then by the choice of $\tilde{v}_i$ we must have $|u_0(z)|_h \geq 1$. Since the range of $b$ restricted to $\cup H_{z_i}$ covers $[\frac{r_0}{4}, r_0]$, we have verified that
\[
\sup_{b = r} |u_0|_h \geq   1,
\]
for all $r \in [\frac{r_0}{4}, r_0]$.
\end{proof}

The weighted $L^2$ section obtained above may not have polynomial growth, actually we only take its restriction on the bounded domain $D_{r_0}$. The idea is to pull it back to larger domains by the biholomorphism generated by $-\n f$. Now we restate Theorem \ref{thm: existence of polynomial growth holomorphic sections - introduction} in more details and give the proof. 

\begin{thm}\label{thm: existence of polynomial growth holomorphic sections}
Let $(M, g, f)$ be an asymptotically conical gradient K\"ahler Ricci shrinker. There exists a constant $\bar{q}$, for any given $q\geq \bar{q}$, there is a constant $\Lambda$ depending on $n, q$ and the geometry, such that for any points $\bar{x} \neq \bar{y}$, and any $(1,0)$-form $\alpha$ at $x$, there is a holomorphic section $u$ in $K_M^{-q}$, with polynomial growth of order at most $\Lambda$, satisfying $u(\bar{x}) \neq 0,  u(\bar{y}) = 0$ and $\pd u(\bar{x}) = \alpha$.
\end{thm}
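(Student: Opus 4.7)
The plan is to invoke Lemma~\ref{lem: local construction of holomorphic sections} at a sequence of scales $R_k\to\infty$ to produce global holomorphic sections $u_k\in\Gamma(K_M^{-q})$ carrying the prescribed pointwise data at $\bar x,\bar y$, and then extract a $C^\infty_{loc}$ subsequential limit of polynomial growth using the weak frequency monotonicity on the asymptotic cone. Concretely, fix $q\geq\bar q$ and $r_0>10$ so that $\bar x,\bar y\in D_{r_0/5}$, and for each $k\geq 1$ apply Lemma~\ref{lem: local construction of holomorphic sections} at scale $R_k$ to obtain $u_k$ with $|u_k(\bar x)|_h=1$, $u_k(\bar y)=0$, $\pd u_k(\bar x)=\alpha$, together with $\sup_{D_{R_k}}|u_k|_h^2\leq C_k$ and $\sup_{b=r}|u_k|_h\geq 1$ for $r\in[R_k/4,R_k]$. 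The first three conditions will persist in the limit, while the remaining bounds allow control of the frequency along the sequence.

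The heart of the proof is the following claim: there exist $\Lambda=\Lambda(n,q,\text{geom})$ and $r_k^*\in[R_k/4,R_k]$ with $U_{u_k}(r_k^*)\leq\Lambda$ uniformly in $k$. Assume the claim fails; then, after passing to a subsequence, $U_{u_k}(r_k^*)\to\infty$. Renormalize $u_k$ so that $I_{u_k}(r_k^*)=1$ (this leaves $U$ unchanged) and pull back via $\phi_{s_k}:=\Phi_{-e^{-s_k}}$ with $e^{s_k/2}r_0\sim r_k^*$. By the asymptotically conical hypothesis and the quadratic curvature decay, the rescaled metrics $e^{-s_k}\phi_{s_k}^*g$ converge in $C^\infty_{loc}$ to the cone metric $g^c$; then standard elliptic estimates together with the $L^\infty$-$L^2$ mean value inequality yield a nontrivial $C^\infty_{loc}$ limit $v_\infty\in\Gamma(K_C^{-q})$ on the asymptotic cone, and the frequency converges. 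By Lemma~\ref{lem: monotonicity of U^c} (monotonicity of $U^c$) and the discreteness of the spectrum of the link operator $L^\Sigma+\tfrac q2 S_{g^c}$, both exploited exactly as in the proof of Theorem~\ref{eqn: weak monotonicity of U in the AC case}, $U^c_{v_\infty}$ equals some finite value in $\mathcal{S}$, contradicting $U_{u_k}(r_k^*)\to\infty$.

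With the uniform bound $U_{u_k}(r_k^*)\leq\Lambda$ in hand, iterating Theorem~\ref{eqn: weak monotonicity of U in the AC case} yields $U_{u_k}(r)\leq\Lambda$ for all $r\in(R_0,(1-\delta)r_k^*)$, where $R_0$ depends only on $\Lambda$ and the geometry. Equation~\eqref{eqn: derivative of I} then gives $(\log I_{u_k})'\leq 2U_{u_k}/r$, hence $I_{u_k}(r)\leq C r^{2\Lambda}$, and via a mean value inequality this translates into the uniform pointwise bound $|u_k|_h\leq C(1+r)^\Lambda$ on compact subsets for all $k$ sufficiently large. Standard elliptic regularity now produces a subsequential $C^\infty_{loc}$ limit $u$, a global holomorphic section of $K_M^{-q}$ of polynomial growth of order at most $\Lambda$, and the pointwise conditions $|u(\bar x)|_h=1$, $u(\bar y)=0$, $\pd u(\bar x)=\alpha$ pass to the limit. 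The main obstacle is the uniform frequency bound: it requires a delicate cone-limit compactness/rigidity argument parallel in spirit to Theorem~\ref{eqn: weak monotonicity of U in the AC case}, and in particular one must carefully balance the normalization $I_{u_k}(r_k^*)=1$ (to ensure the cone limit is nontrivial) against the preservation of the frequency value through the blow-down.
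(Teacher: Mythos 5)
Your overall architecture (produce sections carrying the prescribed data at $\bar x,\bar y$, prove a uniform frequency bound at a large radius, propagate it down with Theorem~\ref{eqn: weak monotonicity of U in the AC case}, integrate $(\log I)'\leq 2U/r$ and pass to a limit) matches the paper's, and the final steps are fine. The genuine gap is in your key claim, the uniform bound $U_{u_k}(r_k^*)\leq\Lambda$. Your contradiction argument is circular: to extract a nontrivial limit $v_\infty$ on the cone you need uniform $L^2$ bounds of $\tilde v_k$ on annuli around $r_k^*$, which (as in the proof of Theorem~\ref{eqn: weak monotonicity of U in the AC case}) come from doubling estimates for $I$, i.e.\ from $(\log I)'=2U/r+\dots$ together with an a priori upper bound on $U$ --- exactly what you are trying to prove. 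Under the negation $\inf_{[R_k/4,R_k]}U_{u_k}\to\infty$, the normalization $I_{u_k}(r_k^*)=1$ gives no control of $I_{u_k}$ at nearby radii, so no nontrivial limit can be extracted, and ``$v_\infty\equiv 0$'' is not a contradiction. Even granting a limit, the discrete set $\mathcal{S}$ is unbounded (it corresponds to the full spectrum of an elliptic operator on the compact link), so ``$U^c_{v_\infty}\in\mathcal{S}$'' is compatible with $U_{u_k}(r_k^*)\to\infty$; homogeneous sections of arbitrarily high degree do exist on the cone. A secondary problem: applying Lemma~\ref{lem: local construction of holomorphic sections} at scales $R_k\to\infty$ makes its constants scale-dependent --- both $\bar q$ and the bound $C_k$ involve the number $k$ of hypersurface pieces $H_{z_i}$ needed to cover the annulus $\{R_k/4\leq b\leq R_k\}$, which grows with $R_k$ --- so you lose the uniform choice of $q$ and any uniform relation between the size of $u_k$ at $\bar x$ and on the outer annulus.

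The paper avoids both issues by a different mechanism: it applies the $\bar\partial$-construction only at the \emph{fixed} scale $r_0$, at the pulled-back points $\psi_{t_i}^{-1}(\bar x),\psi_{t_i}^{-1}(\bar y)$, and then defines $u_i=u_{0,i}\circ\psi_{t_i}^{-1}$ using the biholomorphism generated by $\nabla f$. The level set $\{b=r_i\}$ with $r_i\sim e^{t_i/2}r_0$ is then the image of a hypersurface lying in the fixed compact annulus $\{r_0/2\leq b\leq r_0/2+2\sqrt{\sup S}\}$, and after computing the evolution of $\det g$, $b$, the area form and the Hermitian metric along the flow (using the quadratic curvature decay), the quotient $D_{u_i}(r_i)/I_{u_i}(r_i)$ reduces to $\int|\nabla u_{0,i}|_h|u_{0,i}|_h\big/\int|u_{0,i}|_h^2$ over that fixed region, where the numerator is bounded by the local gradient estimate and the $L^\infty$ bound of Lemma~\ref{lem: local construction of holomorphic sections}, and the denominator is bounded below via the mean value inequality and the lower bound $\sup_{b=r}|u_{0,i}|_h\geq 1$. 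This gives $U_{u_i}(r_i)<\Lambda$ by direct computation, with no compactness argument and no circularity. If you want to salvage your route, you would need an independent a priori doubling estimate for $I_{u_k}$ near $r_k^*$ before taking the cone limit; as written, the proposal does not prove the theorem.
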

\begin{proof}
 Let $\psi_t$ be a family of biholomorphisms generated by 
\[
\pd_t \psi_t = \n f \circ \psi_t, \quad \psi_0 = id. 
\]
Choose $r_0$ large enough depending on the geometry, such that the critical set of $f$ (which is bounded since we have assumed bounded curvature) is contained in $D_{\frac{r_0}{8}}$. Then for any point $x$, we have $\psi^{-1}_t(x) \in D_{\frac{r_0}{5}}$ when $t$ is sufficiently large. 
Take a sequence of positive numbers $t_1 <  t_2 < t_3 ...$, such that $t_i \to \infty$. For each $i$ sufficiently large, let $u_{0, i}$ be the holomorphic section constructed in Lemma \ref{lem: local construction of holomorphic sections}, with $u_{0, i}(\psi_{t_i}^{-1}( \bar{x} )) \neq 0$, $u_{0, i}(\psi_{t_i}^{-1}( \bar{y} )) = 0$, and $\pd u_{0, i}(\psi_{t_i}^{-1}( \bar{x} )) = \psi_{t_i}^{*} \alpha$. Define $u_i = u_{0, i}(\psi_{t_i}^{-1})$, then $u_i( \bar{y} ) = 0$ and $u_i(\bar{x})\neq 0$, moreover $u_i(\bar{x})$ and $\pd u_i(\bar{x})$ are independent of $i$.

Take $r_0$ larger if necessary such that 
$r_0^2 \geq 100 \sup_M S$, then $\sqrt{r_0^2 - 4 \sup_M S} > \frac{1}{2} r_0$. Define 
$$r_i = \frac{1}{2} e^{t_i/2} r_0.$$
We want to derive information of $u_i$ on $D_{r_i}$, from information of $u_0$ on $D_{r_0}$. Thus we need to compute the evolution of various quantities along the flow lines of $\psi_t$.

Observe that $\pd_t \det g(\psi_t) =  \Delta f \det g=(m - S) \det g(\psi_t) $, hence we have 
\begin{equation}\label{eqn: evolution of det(g)}
\det g(x) = e^{mt - k(x, t)} \det g(\psi_t^{-1}(x)),
\end{equation}
where
\[
k(x, t) : = \int_0^t S(\psi_\tau^{-1}(x)) d\tau.
\]
The function $b(\psi_t)$ evolves along the flow lines of $\n f$ by the ODE
\begin{equation}\label{eqn: ode for the evolution of b}
\frac{d}{dt} b(\psi_t(x)) = \frac{1}{2} b(\psi_t(x)) - \frac{2S(\psi_t)}{b(\psi_t(x))}.
\end{equation}
Integrating the above equation yields
\begin{equation}\label{eqn: estimate for b along flow lines}
e^{\frac{t}{2} } \left( b(x) - 2 \sqrt{ \sup_M S} \right)\leq b(\psi_t(x)) \leq e^{\frac{t}{2} } b(x).
\end{equation}
Define $\Sigma(r, t) = \psi_t^{-1} (\{ b = r\})$. By (\ref{eqn: estimate for b along flow lines}), $\Sigma(r, t) \subset D_{e^{-t/2}r + 2\sqrt{\sup_M S}} \backslash D_{e^{-t/2}r}$.

The family of hypersurfaces $\Sigma(r, t)$ evolve{\color{blue}s} by 
\[
\frac{d}{dt} \Sigma(r, t) = - \n f. 
\]
Let $X_1, X_2, ..., X_{n-1}$ be a set of tangent frame on $\Sigma(r, t)$, let $g^\Sigma$ be the induced metric on $\Sigma(r, t)$, then we have 
\[
\begin{split}
\frac{d}{dt} g^{\Sigma}(X_\alpha, X_\beta) = & g(\n_{X_\alpha} (-\n f), X_\beta) + g(X_\alpha, \n_{X_\beta} (-\n f))  = - 2 f_{\alpha \beta}\\
\end{split}
\]
where $\vn$ is the unit normal vector of $\Sigma(r, t)$
. The area form on $\Sigma(r, t)$ evolves by 
\[
\frac{d}{dt} d\sigma|_{\Sigma_{r, t}} = (- tr_{g^\Sigma} \n\n f)  d\sigma|_{\Sigma_{r, t}}= \left(-\frac{n-1}{2} + S - Ric(\vn, \vn) \right)d\sigma|_{\Sigma_{r, t}}.
\]
Hence
\[
d\sigma|_{\Sigma_{r, t}} = e^{- \frac{n-1}{2}t + l(\cdot, t)} d\sigma|_{\Sigma_{r, 0}},
\]
where $l(\cdot , t)$ is defined as 
\[
l(x, t) = \int_0^t (S - Ric(\vn, \vn))(\psi^{-1}_\tau(x)) d\tau. 
\]
By the quadratic curvature decay and (\ref{eqn: estimate for b along flow lines}), we have 
\[
|k(x, t)|, |l(x, t)| \leq C e^t b(x)^{-2}.
\]
The evolution of the hermitian metric $h$ along flow lines of $\psi_t$ can be calculated using (\ref{eqn: evolution of det(g)}). Similarly we can computed the evolution of the metric tensor $g$ along flow lines of $\psi_t$, and we have
\[
g|_{\psi_t^{-1}(x)} = e^{-t + 2 \int_0^t Ric} g|_x,
\]
which is needed to control the evolution of the norm $|\n u|_h(\psi_t^{-1})$ for a section $u$. 

\textbf{Claim:} There exists a constant $\Lambda$ depending only on $n, q, r_0$ and the geometry, such that 
\[U_{u_i}(r_i) < \Lambda.\]

To prove the claim we start by rewritting $I_{u_i}(r)$ and $D_{u_i}(r)$. By a change of variable $y = \psi_t^{-1}(x)$, we can write
\[
\begin{split}
I_{u_i}(r_i) =&  r_i^{1-n} \int_{b = r_i} |u_i(x)|^2_{h(x)} |\n b(x)| d\sigma(x) \\
= &  r_i^{1-n} \int_{\Sigma(r_i, t_i)} |u_0(y)|^2_{h(\psi_{t_i}(y))} |\n b(\psi_{t_i}(y))| d\sigma(\psi_{t_i}(y))\\
=& r_i^{1-n} \int_{\Sigma(r_i, t_i)}e^{(qm+\frac{n-1}{2})t_i - q k(x, t_i) + l(x, t_i)} |u_0(y)|^2_h(y) |\n b(x)| d\sigma(y),
\end{split}
\]
and $D_{u_i}(r_i)$ can be rewritten similarly, thus we have 
\begin{equation}\label{eqn: calculate U(r_i)}
\begin{split}
U_{u_i}(r_i) = \frac{D_{u_i}(r_i)}{I_{u_i}(r_i)} \leq & C(n)r_0\frac{e^{-q \inf k(\cdot, t_i) +\sup l(\cdot, t_i) + 2 \sup \int |Ric| } \int_{\Sigma(r_i, t_i) } |\n u_0|_h |u_0|_h}{e^{ - q \sup k(\cdot, t_i) + \inf l (\cdot, t_i)} \int_{\Sigma(r_i, t_i)} |u_0|^2_h} \\
\leq & C \frac{\int_{\Sigma(r_i, t_i) } |\n u_0|_h |u_0|_h}{\int_{\Sigma(r_i, t_i)} |u_0|^2_h},
\end{split}
\end{equation}
where the sup and inf are taken on $\{b = r_i\}$, these terms are bounded by the quadratic curvature decay,
and we have assumed WLOG that $r_i$ are large enough so that $|\n b|> \frac{1}{2}$, the constant $C$ in the last line depends on $n, r_0, q$ and the curvature bounds. 

Observe that $\Sigma(r_i, t_i) \subset \{\frac{r_0}{2} \leq b \leq \frac{r_0}{2}  + 2\sqrt{\sup_M S} \}$, this is a fixed domain where the curvature is bounded, so we can control $|\n u_0|_h$ by gradient estimate, together with H\"older's inequality we can control the numerator in (\ref{eqn: calculate U(r_i)}) from above. Then we only need a lower bound for the denominator. 

By (\ref{eqn: estimate for b along flow lines}) and the choice of $r_0$, we have 
$\{b = \frac{r_0}{2} \} \subset \cup_{r_i/2 < r < r_i} \Sigma(r, t_i)$.
And we can choose $r_0$ large enough, so that for any point $x\in \{b = \frac{r_0}{2}\}$, the unit ball $B_x(1)$ is contained in the set $\cup_{\frac{r_i}{4} < r < \frac{5}{4}r_i} \Sigma(r, t_i)$.
Since the manifolds has locally bounded geometry, we can apply the Nash-Moser iteration method to equation (\ref{eqn: laplacian of norm square of u}) on unit balls to derive a mean value inequality for $|u_0|_h$, 
\[
\sup_{b = \frac{r_0}{2}} |u_0|_h \leq C \int_{\frac{r_i}{4}} ^{ \frac{5}{4}r_i } \int_{\Sigma(r, t_i)} |u_0|^2_h |\n f|^{-1} d\sigma dr \leq C \int_{\Sigma(r'_i, t_i)} |u_0|^2_h d\sigma,
\]
where the second inequality comes from the intermediate value theorem,
the constant $r'_i$ is in the interval $[\frac{1}{4}r_i, \frac{5}{4}r_i]$. 
By the construction of $u_0$ in Lemma \ref{lem: local construction of holomorphic sections}, $sup_{D_r} |u_0|_h \geq 1$ for all $r \in [ \frac{r_0}{4}, r_0]$, hence 
\[
\int_{\Sigma(r'_i, t_i)} |u_0|_h^2 d\sigma \geq c,
\]
where $c> 0$ is a constant depending only on the geometry and the choice of $q, r_0$.
Note that the sequence ${t_i}$ was arbitrarily chosen, so we can adjust the values of $t_i$ (hence of $r_i$), such that 
\[
\int_{\Sigma(r_i, t_i)} |u_0|_h^2 d\sigma \geq c.
\]
Therefore we get 
\[
U_{u_i}(r_i) < \Lambda,
\]
where $\Lambda$ is a constant depending only on $n, q, r_0$ and the geometry. This proves the claim.

Then by Theorem \ref{eqn: weak monotonicity of U in the AC case}, we have $U_{u_i}(r) \leq \Lambda$ for all $R_0 < r < (1-\delta)r_i$ for some fixed constants $\delta \in (0, 1)$ and $R_0$. Hence by integrating the equation 
\[
I'(r) = \frac{2U(r) I(r)}{r} + r^{-n} \left( \frac{4n}{r^2} - 2\right) \int_{b=r} \frac{S |u|_h^2}{|\n b|}, 
\]
and using the quadratic curvature decay condition, we get that $I_{u_i}(r) \leq C(1+r)^{2\Lambda}$, therefore $|u_i|_h$ must have polynomial growth with order bounded uniformly for all $i$. Note that $|u_i(\bar{x})| > 0$ is a fixed value, so the limit exist{\color{blue}s} when $i \to \infty$, which is the desired holomorphic section. 
\end{proof}

Let $\mathcal{O}_{\Lambda}(K_M^{-q})$ be the linear space of holomorphic sections of $K_M^{-q}$ with polynomial growth of order at most $\Lambda$. By Corollary \ref{cor: dimension estimate for power of anticanonical bundle}, the dimension of $\mathcal{O}_{\Lambda}(K_M^{-q})$ is finite. Let $u_0, u_1, ..., u_N$ be a basis for $\mathcal{O}_{\Lambda}(K_M^{-q})$, the Kodaira map $\mathcal{K}: M \to \mathbb{CP}^N$ is defined by
\[
\mathcal{K}(x) = [u_0(x), u_1(x), ..., u_N(x)]. 
\] 
Now we restate and prove Corollary \ref{cor: Kodaira emdedding of AC shrinkers} in the introduction. 
\begin{cor}
Let $(M, g, f)$ be a gradient K\"ahler Ricci shrinker which is asymptotically conical. Then for $q$ large enough, there are constants $\Lambda$ and $N$, such that the Kodaira map $\mathcal{K}: M \to \mathbb{CP}^N$ constructed by sections of $\mathcal{O}_{\Lambda}(K_M^{-q})$ is a holomorphic embedding. 
\end{cor}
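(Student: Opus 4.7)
\textbf{Proposal for Corollary \ref{cor: Kodaira emdedding of AC shrinkers}.}

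The plan is to deduce the embedding property directly from Theorem \ref{thm: existence of polynomial growth holomorphic sections - introduction}, which already furnishes all the necessary data: a uniform polynomial-growth bound $\Lambda$ together with sections having freely prescribed value at one point, a zero at another, and a freely prescribed holomorphic differential. Fix $q \geq \bar{q}$ and let $\Lambda$ be the resulting uniform growth exponent, so that $V := \mathcal{O}_\Lambda(K_M^{-q})$ has finite dimension $N + 1$ by Corollary \ref{cor: dimension estimate for power of anticanonical bundle}. Choosing a basis $u_0, \dots, u_N$, I regard the Kodaira map intrinsically as $\mathcal{K}(x) = [\mathrm{ev}_x] \in \mathbb{P}(V^*)$, where $\mathrm{ev}_x : V \to (K_M^{-q})_x$ is the evaluation functional (composed with a trivialization of the fiber, whose choice does not affect the projective class).

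I would then verify the three standard properties in turn. \emph{(i) $\mathcal{K}$ is everywhere defined.} For any $x \in M$, choose any $y \neq x$ and apply Theorem \ref{thm: existence of polynomial growth holomorphic sections - introduction} with $\alpha = 0$; the resulting $u \in V$ has $u(x) \neq 0$, so not all $u_i$ vanish at $x$. \emph{(ii) $\mathcal{K}$ is injective.} For $x \neq y$, the same theorem produces $u \in V$ with $u(y) = 0$ but $u(x) \neq 0$, so $\ker \mathrm{ev}_x \neq \ker \mathrm{ev}_y$ in $V$, hence $[\mathrm{ev}_x] \neq [\mathrm{ev}_y]$ in $\mathbb{P}(V^*)$.

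\emph{(iii) $\mathcal{K}$ is an immersion.} Let $E_x = \{s \in V : s(x) = 0\}$, which has codimension one in $V$ by (i); the map $\mathcal{K}$ is an immersion at $x$ exactly when the derivative map $\partial_x : E_x \to (K_M^{-q})_x \otimes \Lambda^{1,0}_x$, $s \mapsto \partial s(x)$, is surjective. To realize a prescribed $\alpha$, I first apply Theorem \ref{thm: existence of polynomial growth holomorphic sections - introduction} with prescribed differential $0$ to obtain an auxiliary $u_* \in V$ with $u_*(x) \neq 0$ and $\partial u_*(x) = 0$; then apply it again with prescribed differential $\alpha$ to obtain $u \in V$ with $u(x) \neq 0$ and $\partial u(x) = \alpha$. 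The section $s := u - (u(x)/u_*(x))\, u_*$ lies in $E_x$ and satisfies $\partial s(x) = \alpha - 0 = \alpha$, establishing surjectivity of $\partial_x$.

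The substantive work is all packaged inside Theorem \ref{thm: existence of polynomial growth holomorphic sections - introduction}; once uniformity of $\Lambda$ across all choices of $(\bar{x}, \bar{y}, \alpha)$ is in hand, the embedding follows by linear algebra. The subtle point to track is precisely this uniformity: the auxiliary zero-derivative section $u_*$ must lie in the \emph{same} space $\mathcal{O}_\Lambda(K_M^{-q})$ as every other section used above, which is exactly what Theorem \ref{thm: existence of polynomial growth holomorphic sections - introduction} asserts. No additional analysis is needed for injective immersion; topological embeddedness would require a further properness or closed-image argument, but in the setup of this paper the Kodaira ``embedding'' is understood in the injective-holomorphic-immersion sense, in line with the analytic tradition indicated by the surrounding remarks.
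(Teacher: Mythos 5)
Your proposal is correct and follows essentially the same route as the paper: the paper's proof simply invokes the separation of points and tangents from Theorem \ref{thm: existence of polynomial growth holomorphic sections - introduction}, finite-dimensionality from Corollary \ref{cor: dimension estimate for power of anticanonical bundle}, and then defers to ``classic arguments'' (citing Berndtsson), which are precisely the well-definedness/injectivity/immersion steps you spell out. Your closing caveat about topological embeddedness versus injective immersion is a fair observation, but it does not constitute a divergence from the paper's argument.
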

\begin{proof}
By Theorem \ref{thm: existence of polynomial growth holomorphic sections}, when $q$ is sufficiently large, the space $\mathcal{O}_{\Lambda}(K_M^{-q})$ separates points and tangents, and by Corollary \ref{cor: dimension estimate for power of anticanonical bundle} it is finite dimensional, so the proof follows from classic arguments (see for example \cite{Ber2009}). 
\end{proof}

\textbf{Acknowledgement:} We would like to thank Professor Huai-Dong Cao, Professor Jiaping Wang for their interest in this work and helpful conversations. F.H. is partially supported by NSFC grant No.12141101. J.O. is partially supported by NSFC grant No.12401074, 12371061, 12371081.



\end{document}